\newcommand{\mynote}[3]{
  \fbox{\bfseries\sffamily\scriptsize#1}
  {\small$\blacktriangleright$\textsf{\emph{\color{#3}{#2}}}$\blacktriangleleft$}}}
\newcommand{\mynote}[3]{}}
\definecolor{asparagus}{rgb}{0.53, 0.66, 0.42}
\definecolor{phase1}{HTML}{377EB8}
\definecolor{phase2}{HTML}{FF7F00}
\definecolor{phase3}{HTML}{4DAF4A}
  \theoremstyle{plain}
  \newtheorem{theorem}{Theorem}
  \newtheorem*{theorem*}{Theorem}
  \newtheorem{proposition}{Proposition}
  \newtheorem{corollary}{Corollary}
  \newtheorem{lemma}{Lemma}
  \theoremstyle{definition}
  \newtheorem{definition}{Definition}
  \newtheorem{rem}{Remark}
  \def \L{\mathcal{L}}
  \def \N{\mathbb{N}}
  \def \R{\mathbb{R}}
  \newcommand {\inj} {\mathop \textup{inj}(M)}
 \renewcommand {\div} {\textup{div}}
 \newcommand {\divrho} {\textup{div}_\mu}
 \newcommand {\dist} {\textup{dist}}
 \newcommand {\diam} {\textup{diam}}
 \newcommand {\tr} {\textup{tr}}
 \newcommand {\dmx} {\,d\mu(x)}
 \newcommand {\dmy} {\,d\mu(y)}
 \newcommand {\Per} {\textup{Per}}
  \newcommand {\eps} {\varepsilon}
\DeclareMathOperator*{\argmax}{\mathop \textup{arg\,max}}
\DeclareMathOperator*{\esssup}{\mathop \textup{ess\,sup}}
\DeclareMathOperator*{\argmin}{\mathop \textup{arg\,min}}
\DeclareMathOperator*{\Vol}{Vol}
\DeclareMathAlphabet{\mathbbold}{U}{bbold}{m}{n}
 \def\namedlabel#1#2{\begingroup
    #2%
    \def\@currentlabel{#2}%
    \phantomsection\label{#1}\endgroup
}
\title[Convergence Volume-Preserving MBO]{Weighted, Multiphase, Volume-Preserving Mean Curvature Flow as Limit of the MBO Scheme on Manifolds}
\author{Fabius Kr{\"a}mer}
\address{Fakult{\"a}t f{\"ur} Mathematik, Universit{\"a}t Heidelberg, Im Neuenheimer Feld 205, 69120 Heidelberg, Germany (\texttt{f.kraemer@math.uni-heidelberg.de})}
\address{Fakult{\"a}t f{\"ur} Mathematik, Universit{\"a}t Regensburg, Universit{\"a}tsstra{\ss}e 31, 93053 Regensburg, Germany}
\date{\today}
\begin{document}
    
  \begin{abstract}
  The famous thresholding scheme by Merriman, Bence, and Osher (Motion of multiple junctions: A level set approach. Journal of Computational Physics 112.2 (1994): 334-363.) proved itself as a very efficient time discretization of mean curvature flow. The present paper studies a multiphase, volume constrained version on a weighted manifold that naturally arises in the context of data science. The main result of this work proves convergence to multiphase, weighted, volume-preserving mean curvature flow on a smooth, closed manifold. The proof is only conditional in the sense that convergence of the approximating energy to the weighted perimeter is assumed. These type of assumptions are natural and common in the literature. The proof shows convergence in the energy dissipation inequality, induced by the underlying gradient flow structure, similar to the work of Laux and Otto (The thresholding scheme for mean curvature flow and De Giorgi's ideas for minimizing movements. The Role of Metrics in the Theory of Partial Differential Equations 85 (2020): 63-94.). This leads to a new De Giorgi solution concept that describes weakly the limiting flow. Estimates regarding the derivatives of the heat kernel are developed in order to pass to the limit in the variations of the energy and metric.

	\medskip
    
  \noindent \textbf{Keywords:} Mean Curvature flow, gradient flows, heat kernel, clustering

  \medskip

\noindent \textbf{Mathematical Subject Classification (MSC2020)}:
 53E10, 53Z50, 49Q20 (Primary), 35D30, 58J35, 49Q05 (Secondary)
  \end{abstract}
\maketitle


\tableofcontents

\section{Introduction}
Mean curvature flow traditionally arose in material science, for example as flow describing the grain boundaries of metal in the annealing process \cite{MR1770893}. But nowadays, its natural property to (locally) minimize the perimeter as gradient flow, motivates its use in data science tasks \cite{MR4825221,  6714564,merkurjev2014diffuse, MR3115457}. One prime example for this is the MBO scheme on graphs by Bertozzi et al.~\cite{MR3115457}; it is a space and time discretization of mean curvature flow  based on the famous MBO scheme~\cite{MBO94}. The scheme is remarkable as, although computationally cheap, it inherits the gradient flow structure to iteratively minimize (locally), over all subsets $\Omega$ of the data, the energy \begin{equation*}
  \sum_x \sum_y p(h,x,y) \mathbbold{1}_\Omega (x) \mathbbold{1}_{\Omega^c}(y).
\end{equation*}
The heat kernel $p(h,x,y)$ depends on the weights of the underlying graph and is an over all paths from node $x$ to node $y$ averaged weight. Therefore, a minimizing set $\Omega$ of the energy minimizes the by the heat kernel given weights of edges that go from the set $\Omega$ to its complement $\Omega^c$. Remember, the classical partition problem aims for  minimizing the sum of weights from $\Omega$ to its complement. Hence, the MBO scheme solves, locally and up to reweighing, the partition problem.

Empirical studies \cite{jacobs2018auction} have shown that the scheme has even better accuracy in semi-supervised clustering tasks if one adds a volume constraint to the scheme. Often the number of data points in every set is (roughly) known in advance such that one can predescribe it in the algorithm. The aim of this paper is to show that this volume-constrained MBO scheme on graph is a space and time discretization of volume-preserving mean curvature flow. To that aim Laux and the author \cite{kramer2024efficient} have already proven that the scheme converges to its mean field limit if the number of data points goes to infinity. This result is obtained by using the $\Gamma$-convergence \cite{laux2021large} of a suitable rescaling of the previously mentioned energy together with convergence of the Lagrange multipliers \cite{kramer2024efficient} of the volume constraints to obtain convergence of the iterates as minimizers in the minimizing movement interpretation of the MBO scheme \cite{esedog2015threshold}. The setting for the big-data limit is given by the ``manifold assumption''; it states that the data points are sampled from a probability measure that is concentrated on a lower-dimensional manifold. The idea behind the assumption is that although the data may be embedded in a high dimensional space, points in the same class or cluster are similar and thus live in a lower dimensional structure. So far the mean field limit and its partition properties are not well understood. One is especially interested in its long-time behavior, as this describes approximately the properties of the result of the discrete clustering algorithm mentioned in the beginning.  

This paper is filling this gap by proving in the main Theorem \ref{the:main} that the mean field limit of the multiphase volume constrained MBO scheme on the weighted data-manifold (see \eqref{eq:volumeMBOscheme} below) conditionally convergences to multiphase, weighted, volume-preserving mean curvature flow on the manifold. The evolution law of the limiting flow on the interface of phase $i$ is (formally) given by
\begin{equation*}
  V_i = - H_i - \langle \mathbf{n}_i, \nabla \log(\rho) \rangle +  \Lambda_i
\end{equation*}
where $V_i$ is the (outer) normal velocity, $H_i$ is the mean curvature, $\mathbf{n}_i$ is the outer unit normal on the boundary of phase $i$, respectively, and $\rho$ is the probability density on the manifold while $\Lambda_i$ is the Lagrange multiplier ensuring the preservation of volume.
The parameter for the convergence is the diffusion time $h$ of the heat kernel (compare with the definition of the MBO scheme \eqref{eq:volumeMBOscheme}), which is actually the step-size for the time discretization of the gradient flow. The setting yields four major difficulties: The lack of the comparison principle as there are more than two phases, the curvature of the manifold, the inhomogeneity due to the weight, and the Lagrange multiplier arising from the volume constraint.

There have been already several works regarding the convergence of the MBO scheme to mean curvature flow \cite{MR1324298,  MR4803721,MR1237058,MR1674750,MR4358242,laux2023large,MR3556529,MR4056816,MR4385030,laux2017convergence,  MR3764921}. The only work on a \emph{weighted manifold} so far is by Laux and Lelmi \cite{laux2023large} where they use the powerful concept of viscosity solutions for mean curvature flow. Viscosity solutions are based on the comparison principle for mean curvature flow which is only valid for the case of two phases. The present paper is the first that proves the convergence of the scheme in the \emph{multiphase} case on manifolds. Therefore, it introduces a new distributional solution concept for (weighted, multiphase,) volume-preserving mean curvature flow on a manifold. This weak solution idea is a natural extension of the ones introduced in \cite{MR4358242, MR4385030} and based on De Giorgi's energy dissipation inequality for gradient flows. That is, one encodes all information of the gradient flow equation $\partial_t u = - \nabla E(u)$ in the inequality $E(u(0)) + \frac{1}{2} \int_0^T |\partial_t u(t)|^2 + |\nabla E(u(t))|^2 \, dt \leq E(u(T))$ stating sharp energy dissipation. In our case, the energy reads as the weighted perimeter, i.e., $E(u) = \int_M \rho |\nabla u|$ where $M$ is the manifold, $\rho$ is the weight and $|\nabla u|$ is the total variation of the Gauss-Green measure of $u$. The $L^2$-gradient (on the boundary and weighted by $\rho$) of the weighted perimeter used in Definition \ref{def:de_giorgi_sol} is, at least formally (one can make this rigorous by looking at inner variations, see \cite[Chapter 17]{MR2976521}), $\nabla E(\mathbbold{1}_\Omega) =  H + \langle \mathbf{n}, \nabla \log(\rho) \rangle $ where $H$ and $\mathbf{n}$ are the mean curvature and normal of $\Omega$, respectively.

Similar to the weight on the manifold, the work \cite{chiesa2025convergence} regards an MBO scheme for an \emph{inhomogeneous} (anisotropic) surface energy. Furthermore, they consider an obstacle mean curvature flow which analogously to the volume constraint yields a Lagrange multiplier. They prove $\Gamma$-convergence of the corresponding energies. Also in \cite{MR4803721} anisotropic, inhomogeneous mean curvature flow is studied but this time with a minimizing movement scheme in the spirit of Almgren-Taylor-Wang \cite{MR1205983}. The authors show that their discretization converges to a flat flow by combing a H{\"o}lder estimate with suitable bounds to apply the Arcel{\`a}-Ascoli theorem. In a second step they show under a similar assumption as \eqref{ass:energy_conv} that their flat flows are distributional solutions. 

The only proof of dynamics for a \emph{volume-constrained} MBO scheme before this paper is given by~\cite{laux2017convergence}. The key in the proof is an $L^2$-bound of the Lagrange multiplier which yields compactness in the weak $L^2$ topology. As their proof only applies to the case of two phases and in the Euclidean space, the bound had to be extended to the multiphase case on a weighted manifold in \cite{kramer2024efficient}. In \cite{kramer2024efficient} the bound was not used for a convergence proof but for estimating the running time of an algorithm to compute the Lagrange multiplier for the scheme. 

There are also other advances in approximating mean curvature flow. The Allen-Cahn equation, the gradient flow of the Modica-Mortala functional, is a phase field approach to the problem of dealing with the singularities arising in mean curvature flow. There is a wide literature on convergence of the Allen-Cahn equation to mean curvature flow. To just name the two closest to this paper, the fundamental work of Laux and Simon \cite{MR3847750} first proved the convergence in the multiphase, volume-preserving case to distributional solutions while the recent paper of Ganedi, Marveggio, and Stinson \cite{ganedi2025convergence} showed the inhomogeneous case. The later work similarly as in the present proof of Proposition \ref{prop:metricslope} shows the convergence of the first variations of the corresponding energies. 

To tackle the mathematical challenges, the discrete energy dissipation inequality of De Giorgi (see Lemma \ref{lem:dedi}) is used which is a consequence of the minimizing movement interpretation of Esedo\=glu and Otto \cite{esedog2015threshold}. Though, one has to be careful as the Lagrange multiplier makes the energy dependent on the previous iteration. The energy dissipation inequality is a stable and  powerful tool as it allows passing to the limit by just proving lower semicontinuity of the velocity and the curvature term (see Proposition \ref{prop:distance} and \ref{prop:metricslope}). For the proofs, Lemma \ref{lem:conv_densities} is crucial stating the convergence of the energy densities. The statement is framed as convergence of measures on the tangent bundle. This is a natural setup to work in; the heat kernel localizes when the diffusion time decreases. Hence, a tangential variable is necessary to describe the direction and velocity of the blow up. The lemma is an adaptation from the two phase Euclidean version in~\cite{MR4385030} and boils down to proving a localized lower-semicontinuity of the thresholding energy. So far there was only one generalization of \cite{MR4385030} to the vectorial setting; the authors in \cite{MR4358242} treated the multiphase, Euclidean case with arbitrary surface tensions $\sigma_{ij}$ and mobilities $\mu_{ij}$, i.e.\ it holds $V_{ij} = - \sigma_{ij}\mu_{ij}H_{ij}$ on the interface between phase $i$ and $j$. To do that an on De Giorgi's structure theorem based partition of unity is needed to localize on every single interface. The present paper provides a simpler approach for the proof of Lemma \ref{lem:conv_densities} in the multiphase case with constant mobilities and surface tensions that manages to avoid the geometric measure theory machinery. The simpler approach here could be easily adapted to the case of additive surface tensions $\sigma_{ij} = \sigma_i + \sigma_j$ which are induced by energies of the form $\sum_i \sigma_i \Per(\Omega_i)$.

One of the main technical contributions of this paper, that could be also useful for future works, are several estimates around the derivatives of the heat kernel $p(h,x,y)$ and its approximation $G_h(x,y) = \frac{1}{(4\pi h)^{d/2}} e^{-\frac{\dist^2(x,y)}{4h}}$. For example Corollary \ref{cor:mixed_gaussian_bound} states a Gaussian bound for the mixed second derivative of the heat kernel. Also, Lemma \ref{lem:hessian_dist_equal_identity} claims that if the curvature of the manifold is bounded then the Hessian of half the squared distance is up to second order equal to the identity, i.e.
\begin{equation*}
\left|\big\langle \nabla_y^2 \frac{\dist^2(x,y)}{2}(w), v \big\rangle_y  - \langle w,v \rangle_y \right| \lesssim \dist^2(x,y)|v||w|
\end{equation*}
for any $v,w \in T_y M$.
One is interested in the Hessian of the squared distance in the proof of Proposition \ref{prop:metricslope} as there the second derivatives of the approximate heat kernel $G_h$ appear in the second variation of the metric.

The rest of the paper is structured as follows: Chapter \ref{cha:main_results} states the main result and the necessary intermediate results for its proof. Then Chapter \ref{sec:notation} introduces basic notation and preliminaries for calculus on weighted manifold while Chapter \ref{cha:estimates_kernel} proves the above-mentioned estimates regarding the heat kernel. Finally, Chapter \ref{cha:proofs} gives the proof of the compactness of Lemma \ref{lem:Compactness}, the energy convergence of Lemma \ref{lem:conv_densities}, the lower semicontinuity of the metric quotient of Proposition \ref{prop:distance} and the lower semicontinuity of the metric slope of Proposition \ref{prop:metricslope}.
\section{Main Result and Structure of Proof}\label{cha:main_results}
Let a closed, smooth Riemannian manifold $M$ together with a probability measure $\mu = \rho \Vol$ be given, where $\Vol$ denotes the volume measure on $M$.
We assume the bound $\rho > 0$ and denote by $p$ the heat kernel on $M$ (see Chapter \ref{sec:notation} for more information).
Assume an initial partition of $M$ into $P \in \N$ phases given by the characteristic 
functions $\chi^0_1, \dots, \chi^0_P: M \rightarrow \{0,1\}, \sum_{i = 1}^P \chi_i = 1$. To describe the dynamics of the evolving sets $\Omega_i = \{ \chi_i = 1\}$ one updates the characteristic functions 
\begin{equation}\label{eq:volumeMBOscheme}
  \chi^{\ell}_i(x) :=  
  \begin{cases}
    1 & \text{if } i \in \argmax_{j = 1, \dots, P} \left\{ \big(p(h) * \chi_j^{\ell -1}\big)(x) - m^\ell_j \right\}\\
    0 & \text{otherwise},
  \end{cases}
\end{equation}
where the Lagrange multiplier $m^{\ell} \in \R^P$ is chosen such that $\sum_{i = 1}^P m^\ell_i = 0$ and the volume of every phase is preserved, i.e.,
\begin{align*}
  \int_M \chi^\ell \, d\mu = \int_M \chi^{0} \, d\mu.
\end{align*}
The Lagrange multiplier $m^\ell$ exists and is unique if $h > 0$ and $\int_M \chi^{0} \, d\mu > 0$, 
see \cite{jacobs2018auction,kramer2024efficient} for more information. Furthermore, one defines the piecewise 
constant interpolation 
\begin{align}\label{eq:piecewise_interpolation}
  \chi_h(t) := \chi^\ell \quad \text{if } t \in [\ell h, (\ell + 1)h),\quad \chi_h(t) = \chi^0 \text{ if } t \leq 0.
\end{align}
The main result of this paper states the convergence of this interpolation of the volume constrained MBO scheme to weighted, volume-preserving mean curvature flow. 
Here the following weak solution concept is used for the flow. 
\begin{definition}[De Giorgi Solution MCF]\label{def:de_giorgi_sol}
  A function $\chi: M \times (0,T) \rightarrow \{0,1\}^P$ with $\sum_i \chi_i = 1 \ \mu$-a.e.\ is called a De Giorgi solution to weighted, volume-preserving mean curvature flow if it satisfies the following three conditions a), b) and c):
  \begin{enumerate}[a)]
    \item For every $i = 1, \dots, P$ there exists $H_i \in L^2(|\nabla \chi_i| \,dt)$ which is the mean curvature in the 
    weak sense, i.e., for every $\xi \in C^\infty((0,T), \Gamma(TM))$ it holds
    \begin{align}\label{def:mean_curvature}
      \sum_{i = 1}^P \int_{M\times(0,T)} \big(\div\,\xi - \langle \mathbf{n}_i, \nabla_{\mathbf{n}_i} \xi \rangle\big) |\nabla
      \chi_i|(t) \,dt = - \sum_{i = 1}^P \int_{M \times (0,T)} H_i \langle\mathbf{n}_i, \xi \rangle |\nabla
      \chi_i|(t) \,dt.
    \end{align} 
    \item For every $i = 1, \dots, P$ there exists normal velocities $V_i \in L^2(|\nabla \chi_i| \,dt)$ such that for
    every $\eta \in C_c^\infty(M \times [0,T))$ one has
    \begin{align}\label{def:velocity}
      \int_M \eta(t = 0)\chi_i^0 \,d\Vol + \int_{M \times (0,T)}\partial_t \eta\, \chi_i \, 
      d\Vol \, dt + \int_{M \times (0,T)} \eta V_i \, |\nabla \chi_i| \, dt = 0.
    \end{align}
    \item De Giorgi's inequality is satisfied, i.e.,
    \begin{align}
      \limsup_{\tau \downarrow 0} &\frac{1}{\tau}\sum_{i = 1}^P \int_{(T-\tau,T)} \rho \,|\nabla \chi_i|(t) \,dt
      \nonumber\\+& \frac{1}{2}\sum_{i = 1}^P  \int_{M \times (0,T)} \rho\left(V_i^2 + \left(H - \Lambda_i + 
      \langle \mathbf{n}_i, \nabla \log \rho \rangle\right)^2 \right) \,|\nabla \chi_i| \, dt
      \leq \sum_{i = 1}^P \int_M \rho \, |\nabla \chi_i|. \label{eq:deGiorgi}
    \end{align}
    with 
    \begin{equation}\label{def:lagrange_multi}
          \Lambda_i =  \frac{\int_M ( H + \langle \mathbf{n}_i, \nabla \log \rho \rangle ) \rho |\nabla \chi_i|}{\int_M \rho |\nabla \chi_i|}.
    \end{equation}
  \end{enumerate}
  
\end{definition}

It follows a remark showing that Definition \ref{def:de_giorgi_sol} is meaningful.
\begin{rem} Definition \ref{def:de_giorgi_sol} is a natural extension of De Giorgi solutions of mean curvature flow introduced in \cite{MR4358242, MR4385030}. In a weak sense, equation \eqref{def:mean_curvature} enforces the Herring angle conditions which together with De Giorgi's inequality \eqref{eq:deGiorgi} imply the dynamics $ V_i = H_i - \Lambda_i + \langle \mathbf{n}_i, \log \rho\rangle$. Moreover, the dynamics together with definition of the velocities \eqref{def:velocity} and the Lagrange multiplier $\Lambda$ yield the conservation of mass $\int_M \chi(t_1) d\mu = \int_M \chi(t_2) d\mu$ for all $t_1, t_2 \in [0,T]$. To see this assume that \eqref{def:mean_curvature}, \eqref{def:velocity}  and \eqref{eq:deGiorgi} are satisfied for $\chi: M \times [0,T] \rightarrow \{0,1\}^P$ with $\sum_i \chi_i(t) = 1$ such that the sets $\Omega_i(t) = \{\chi_i(\cdot, t)=1\}$ have smoothly evolving interfaces $\Sigma_{ij} := \partial \Omega_i \cap \partial \Omega_j$.
\begin{enumerate}
  \item The Herring angle condition is a consequence of \eqref{def:mean_curvature}. To see this, apply the divergence theorem on surfaces (cf. \cite[Chapter 2.7]{MR756417}) to a test function $\xi \in C_c^\infty(M)$
  \begin{equation*}
    \int_{\Sigma_i} \div \,\xi - \langle \mathbf{n}_i, \nabla_{\mathbf{
      n}_i} \xi \rangle  \, d \mathcal{H}^{d-1} = - \int_{\Sigma_i}  H_i \langle \xi, \mathbf{n}_i \rangle \, d \mathcal{H}^{d-1} - \int_{\partial \Sigma_i} \langle \xi, \eta_i \rangle \,d \mathcal{H}^{d-2}
  \end{equation*} 
  where $\eta_i$ is the inward pointing unit conormal, i.e., tangent to $\Sigma_i$ and normal to $\partial \Sigma_i$. For example, when denoting with $J$ the rotation by ninety degrees in the normal plane, one has $\eta_i = J \mathbf{n}_i$ at any triple junction. So, equality \eqref{def:mean_curvature} and the above divergence theorem on surfaces imply 
  \begin{align*}
    0 = \int_{\partial \Sigma_1} \langle \xi , J \mathbf{n}_1 \rangle \,d \mathcal{H}^{d-2}+     \int_{\partial \Sigma_2} \langle \xi , J \mathbf{n}_2 \rangle \,d \mathcal{H}^{d-2}
+    \int_{\partial \Sigma_3} \langle \xi , J \mathbf{n}_3 \rangle \,d \mathcal{H}^{d-2},
  \end{align*}
  such that at triple junctions it has to hold the Herring angle condition $\mathbf{n}_1 +\mathbf{n}_2+\mathbf{n}_3 = 0$.

  \item The dynamics $ V_i = -  H_i + \Lambda_i - \langle \mathbf{n}_i, \nabla \log \rho \rangle$ are satisfied with $\Lambda_i$ as in \eqref{eq:deGiorgi}. This can be seen because the De Giorgi inequality \eqref{eq:deGiorgi} reduces in the smooth case to
  \begin{equation*}
    \sum_{i=1}^P \int_0^T \frac{d}{dt} \int_M \rho |\nabla \chi_i|  \,dt
    + \frac{1}{2} \sum_{i = 1}^P \int_{M \times (0,T)} \left(V_i^2 + (H_i - \Lambda_i + \langle\mathbf{n}_i, \nabla\log \rho\rangle )^2 \right) \rho |\nabla \chi_i| \, dt \leq 0 .
  \end{equation*}
  Together with the Herring angle condition one checks that 
  \begin{equation*}
    \sum_{i = 1}^P \frac{d}{dt} \int_M \rho |\nabla \chi_i(t) | = \sum_{i = 1}^P \int_M \rho V_i (H_i + \langle \mathbf{n}_i ,\nabla \log(\rho) \rangle) |\nabla \chi_i|
  \end{equation*}
    which together with $ \sum_{i=1}^P\int_M \rho V_i \Lambda_i |\nabla \chi_i| = 0$ allows us to complete the square 
  \begin{equation*}
    \sum_{i = 1}^P \int_{M \times (0,T)} \rho (V_i + (H_i - \Lambda_i + \langle \mathbf{n}_i, \nabla \log(\rho )\rangle))^2 |\nabla \chi_i| \, dt \leq 0.
  \end{equation*}
  Hence, the dynamics $V_i = - H_i + \Lambda_i - \langle \mathbf{n}_i, \nabla \log(\rho) \rangle$ hold on the boundary of $\Omega_i$.
  \item The volume is preserved, i.e., for any $T > t \geq 0$ it holds $\int_M \chi_i(t) \, d\mu = \int_M \chi_i(0) \,d\mu$. This follows immediately from the dynamics $V_i = - H_i + \Lambda_i - \langle \mathbf{n}_i, \nabla \rho \rangle$ together with the definitions \eqref{def:velocity} and \eqref{def:lagrange_multi} of $V_i$ and $\Lambda_i$, respectively, such that
  \begin{align*}
    \int_M \chi_i(t) \rho\, d\Vol - \int_M \chi_i(0) \rho \,d \Vol &= \int_{M \times (0,T)}  \rho V_i | \nabla \chi_i| \, dt \\
    &=  -\int_{M \times (0,T)} \rho (H_i - \Lambda_i + \langle \mathbf{n}_i, \nabla \rho \rangle ) |\nabla \chi_i| \, dt\\
    &=0.
  \end{align*}
\end{enumerate}
\end{rem}
The main result of this paper states that the volume constrained MBO scheme converges to a De Giorgi solution to volume-preserving mean curvature flow given by Definition \ref{def:de_giorgi_sol}.
\begin{theorem}[Main Theorem]\label{the:main}
  Given $\chi^0 = (\chi_1^0, \dots, \chi_P^0)$ with $\chi_i^0:M \rightarrow \{0,1\}$, $\sum_{i = 1}^P \chi_i^0 = 1$ $\mu$-a.e.\ and such that $|\nabla \chi_i^0|$ is a bounded measure for every $i = 1, \dots, P$. For any sequence $h \downarrow 0$ the interpolation $\chi_h$ is defined by \eqref{eq:volumeMBOscheme} and \eqref{eq:piecewise_interpolation}. Assume there exists a limit $\chi: M \times (0,T) \rightarrow [0,1]^P$ such that 
  \begin{equation}\label{eq:assumption_weakl1}
    \chi_h \rightharpoonup \chi \quad \text{in } L^1(\mu \times dt).
  \end{equation}
  Then it holds $\chi \in \{0,1\}^P, \sum_{i = 1}^P \chi_i = 1\ \mu$-a.e.\ and $\nabla \chi_i$ is a bounded measure which is equi-integrable in the time variable. If additionally one supposes that
  \begin{equation}\label{ass:energy_conv}
    \limsup_{h \downarrow 0} \int_0^T E_h(\chi_h(t)) \, dt\leq c_0 \sum_{i = 1}^P \int_{(0,T) \times M} 
    \rho(x) \, |\nabla \chi_i(x)| \,dt
  \end{equation}
  with $c_0 := \frac{1}{\sqrt{\pi}}$ holds, then $\chi$ is a De Giorgi solution to mean curvature flow in the sense of Definition~\ref{def:de_giorgi_sol}.
\end{theorem}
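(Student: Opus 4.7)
The plan is to derive De Giorgi's inequality \eqref{eq:deGiorgi} in the limit $h \downarrow 0$ from its discrete counterpart at the level of the MBO scheme (Lemma \ref{lem:dedi}). At the discrete level, the minimizing-movement interpretation of Esedo\=glu--Otto, adapted to include the volume constraint, delivers an inequality of the form
\[
E_h(\chi_h(T)) + \tfrac{1}{2}\int_0^T \mathrm{MQ}_h(t)\,dt + \tfrac{1}{2}\int_0^T \mathrm{MS}_h(t)\,dt \;\leq\; E_h(\chi^0),
\]
where $\mathrm{MQ}_h$ is the discrete metric quotient (playing the role of $|\partial_t u|^2$) and $\mathrm{MS}_h$ is the discrete metric slope (playing the role of $|\nabla E(u)|^2$). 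Because the Lagrange multipliers $m^\ell$ in \eqref{eq:volumeMBOscheme} satisfy $\sum_i m^\ell_i = 0$, they combine naturally with the first variation of $E_h$ to yield a slope that formally converges to $H_i - \Lambda_i + \langle \mathbf{n}_i, \nabla\log\rho \rangle$ with $\Lambda_i$ as in \eqref{def:lagrange_multi}; isolating this combination already at the discrete level is the key algebraic preparation.

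Passing to the limit then splits into four substeps. First, compactness and the structural statements $\chi \in \{0,1\}^P$, $\sum_i \chi_i = 1$ and equi-integrability of $|\nabla \chi_i(t)|$ are furnished by Lemma \ref{lem:Compactness} combined with assumptions \eqref{eq:assumption_weakl1} and \eqref{ass:energy_conv}. Second, the right-hand side $E_h(\chi^0)$ converges to $c_0\sum_i \int_M \rho|\nabla \chi_i^0|$ by the standard behaviour of $E_h$ on the smooth initial data, and the terminal term $E_h(\chi_h(T))$ is transformed into the $\tau$-averaged $\limsup$ on the left of \eqref{eq:deGiorgi} via assumption \eqref{ass:energy_conv} and the monotonicity of the energy along the scheme. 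Third, the existence of $H_i \in L^2(|\nabla \chi_i|\,dt)$ and the weak formulation \eqref{def:mean_curvature} are obtained by testing the first variation of $E_h$ against smooth vector fields $\xi$ and invoking the convergence of the energy densities on the tangent bundle provided by Lemma \ref{lem:conv_densities}; the same ingredient, applied to test functions $\eta \in C^\infty_c(M \times [0,T))$ on the discrete evolution $\chi^\ell - \chi^{\ell-1}$, produces the velocities $V_i \in L^2(|\nabla \chi_i|\,dt)$ and the identity \eqref{def:velocity}. Fourth, Propositions \ref{prop:distance} and \ref{prop:metricslope} supply the lower bounds $\tfrac{1}{2}\sum_i\int \rho V_i^2|\nabla\chi_i|\,dt \leq \liminf_h \tfrac12 \int \mathrm{MQ}_h$ and its slope analogue, completing \eqref{eq:deGiorgi}.

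The hardest step will be the lower semicontinuity of the slope, i.e.\ Proposition \ref{prop:metricslope}, since one must identify in the limit the exact combination $H_i - \Lambda_i + \langle \mathbf{n}_i, \nabla\log\rho\rangle$. This requires analysing the second variation of $E_h$, where mixed second derivatives of the heat kernel $p(h,x,y)$ appear and concentrate as $h \downarrow 0$; this is precisely where the Gaussian bound on the mixed derivative (Corollary \ref{cor:mixed_gaussian_bound}) and the near-identity expansion of $\nabla_y^2 \tfrac12 \dist^2(x,y)$ (Lemma \ref{lem:hessian_dist_equal_identity}) enter, allowing one to reduce the slope on the manifold to its flat tangent-plane form, so that the geometric mean curvature $H_i$, the weight term $\langle \mathbf{n}_i, \nabla\log\rho\rangle$, and the nonlocal Lagrange multiplier $\Lambda_i$ decouple cleanly in the limit. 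A secondary but nontrivial difficulty is that the multiphase, non-comparison-principle setting forces the use of the energy-dissipation route rather than viscosity solutions; the simpler, partition-of-unity-free proof of Lemma \ref{lem:conv_densities} sketched in the introduction is what makes the vectorial passage to the limit tractable in the present constant-surface-tension setting.
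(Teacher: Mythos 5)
Your proposal follows essentially the same architecture as the paper's proof of Theorem~\ref{the:main}: discrete De Giorgi inequality from Lemma~\ref{lem:dedi} (via the minimizing-movement interpretation of Lemma~\ref{lem:mmi}), compactness and structure from Lemma~\ref{lem:Compactness}, energy-density convergence from Lemma~\ref{lem:conv_densities}, and the two lower-semicontinuity results Propositions~\ref{prop:distance} and~\ref{prop:metricslope} for the metric quotient and the slope. A few attributions in your sketch are off, though none are fatal to the plan. First, the mixed second derivatives of $p(h,x,y)$ and the near-identity Hessian of $\tfrac12\dist^2$ enter through the second variation of the \emph{metric} $d_h$ (the term $h\lim_{s\to 0}\frac{d_h^2(u_h,u_{h,s})}{2hs^2}$ appearing in \emph{Step 1} and \emph{Step 3} of the proof of Proposition~\ref{prop:metricslope}), not through the second variation of the energy $E_h$; the energy only contributes its first variation. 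Second, the paper applies Proposition~\ref{prop:distance} to the piecewise-constant interpolation $\chi_h$ but Proposition~\ref{prop:metricslope} to the \emph{variational} interpolation $u_h$ from~\eqref{def:variation_interpolation}; your plan treats the dissipation terms symmetrically and misses that the variational interpolation, not $\chi_h$, is what makes the slope estimate available, with $u_h\to\chi$ in $L^2$ coming from~\eqref{eq:bound_dist} and~\eqref{eq:l2_u}. Third, converting the terminal term $E_h(\chi_h(T))$ into the $\tau$-averaged $\limsup$ of~\eqref{eq:deGiorgi} is not accomplished by monotonicity plus assumption~\eqref{ass:energy_conv} alone: the paper multiplies the discrete dissipation inequality by $\eta(\ell h)-\eta((\ell-1)h)$ for $\eta(t)=\max\{\min\{\tfrac{T-t}{\tau},1\},0\}$ and integrates by parts, then controls the resulting average from below by the lower-semicontinuity bound~\eqref{eq:lowersemicontinuity_energy} restricted to $(T-\tau,T)$. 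If you tried to use~\eqref{ass:energy_conv} here you would be aiming the inequality in the wrong direction, since it is an upper bound on the time-integrated energy. Finally, the existence of $H_i,V_i \in L^2(|\nabla\chi_i|\,dt)$ is not a separate step preceding Propositions~\ref{prop:distance} and~\ref{prop:metricslope}; it is an output of those propositions themselves (via Riesz representation once the respective quadratic forms are bounded).
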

Similar to the works \cite{MR4358242,MR3556529,MR4056816,MR4385030}, which have been motivated by the work of Luckhaus and Sturzenhecker \cite{MR1386964}, Theorem \ref{the:main} is conditional and requires \eqref{ass:energy_conv} to prevent the loss of interface area. Only the contrary inequality 
  \begin{equation}\label{eq:lower_gamma_bound}
    \liminf_{h \downarrow 0} \int_0^T E_h(\chi_h(t)) \, dt \geq c_0 \sum_{i = 1}^P \int_{(0,T) \times M} 
    \rho(x) \, |\nabla \chi_i(x)| \,dt
  \end{equation}
  is true due to the lower semicontinuity of the energy proven in \cite{laux2021large}. In Lemma \ref{lem:conv_densities} an alternative, simpler proof will be given for a localized version of \eqref{eq:lower_gamma_bound}.

To access the proof of Theorem \ref{the:main} the following  interpretation of the MBO scheme as a minimizing movement scheme, due to \cite{esedog2015threshold}, opens a path to use variational techniques. For a metric space $(\mathcal{M}, d)$, an energy $E:\mathcal{M}\rightarrow \R$, a time step-size $h > 0$ and initial conditions $u^0 \in \mathcal{M}$, the minimizing movement scheme is given by iterating for $\ell = 1, 2, \dots$
\begin{equation}\label{eq:mms_classic}
  u^{\ell} \in \argmin_{u \in \mathcal{M}} E(u) + \frac{1}{2h} d^2(u, u^{\ell - 1}).
\end{equation}
This scheme is the standard discretization of a gradient flow. As mean curvature flow is the $L^2$-gradient flow of the perimeter, it shouldn't be of any surprise that the minimizing movement interpretation of the MBO scheme minimizes over an energy that approximates the perimeter (cf.\ Lemma \ref{lem:mmi}).

As here the volume constrained MBO scheme is under consideration the term incorporating the Lagrange multiplier has to be included in the minimizing movement interpretation. Therefore, the scheme is adapted by making the energy depend on the iteration which lets us hide the Lagrange multiplier in the energy, i.e.,
\begin{equation}\label{def:mms}
    u^{\ell} \in \argmin_{u \in \mathcal{M}} E^\ell(u) + \frac{1}{2h} d(u, u^{\ell - 1}).
\end{equation}
The minimizing movement interpretation is now the following lemma.
\begin{lemma}[Minimizing Movement Interpretation]\label{lem:mmi}
  The scheme \eqref{eq:volumeMBOscheme} satisfies \eqref{def:mms} provided one defines
  \begin{align}
    \mathcal{M} &:= \left\{u: M \rightarrow [0,1]^P \ measurable \Big| \sum_{i = 1}^P u_i = 1 \ \mu-a.e. \right\},\\
    \frac{1}{2h}d_h^2(u, \tilde{u}) &:= \frac{1}{\sqrt{h}} \sum_{i = 1}^P \int_{M^2} 
    p(h,x,y) (u_i -u_j)(x) (u_i -u_j)(y)  \dmy \dmx \label{def:dist_h},\\
    E_h^\ell(u) &:= E_h(u) + \Lambda^\ell \cdot \int_M u \, d\mu,
  \end{align}
  where $\Lambda^\ell = \frac{2}{\sqrt{h}} m^\ell$ is the rescaled Lagrange multiplier of \eqref{eq:volumeMBOscheme} and 
  \begin{equation}
  E_h(u) := \frac{1}{\sqrt{h}} \sum_{i = 1}^P \int_{M}\int_M p(h,x,y) u_i(x) (1- u_i(y))
    \dmy \dmx \label{def:energy_h}
  \end{equation}
  is the classical thresholding energy. Furthermore, $(\mathcal{M}, d_h)$ is a compact metric space and $E_h^\ell$ continuous.
\end{lemma}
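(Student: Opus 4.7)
The plan is to show that the combined functional $u \mapsto E_h^\ell(u) + \frac{1}{2h} d_h^2(u, u^{\ell-1})$ becomes \emph{affine in $u$} up to a constant depending only on $u^{\ell-1}$ and $h$, so that minimizing over $\mathcal{M}$ reduces to a pointwise linear program whose solution is exactly the MBO update \eqref{eq:volumeMBOscheme}. This cancellation between the quadratic parts of $E_h$ and $d_h^2$ is the signature feature of the thresholding energy, here adapted to the weighted manifold with the volume-constraint Lagrange term absorbed into the energy.

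\emph{Linearization.} I would first use $\sum_i u_i \equiv 1$ $\mu$-a.e.\ and the mass conservation $\int_M p(h,x,y)\, d\mu(y) = 1$ of the weighted heat kernel to rewrite
\[ E_h(u) = \frac{1}{\sqrt h} - \frac{1}{\sqrt h} \sum_{i=1}^P \int_{M^2} p(h,x,y)\, u_i(x)\, u_i(y)\, d\mu(x)\, d\mu(y). \]
Expanding the square $(u_i - u_i^{\ell-1})(x)(u_i - u_i^{\ell-1})(y)$ in $d_h^2$ (reading the intended $\tilde u_i$ in place of the apparently misprinted $u_j$) produces an identical quadratic term in $u_i$ with the opposite sign, which cancels. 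With $\Lambda^\ell = \tfrac{2}{\sqrt h} m^\ell$ one obtains
\[
  E_h^\ell(u) + \tfrac{1}{2h} d_h^2(u, u^{\ell-1}) = C_h(u^{\ell-1}) - \frac{2}{\sqrt h}\sum_{i=1}^P \int_M u_i(x)\bigl[(p(h) * u_i^{\ell-1})(x) - m_i^\ell\bigr]\, d\mu(x).
\]
Because the right side is linear in $u$, the minimum over $\mathcal{M}$ is attained pointwise on the standard simplex by concentrating on the argmax of $a_i(x) := (p(h) * u_i^{\ell-1})(x) - m_i^\ell$; any measurable selector of this argmax reproduces \eqref{eq:volumeMBOscheme}.

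\emph{Metric, compactness, continuity.} Letting $(T_t)_{t \geq 0}$ denote the weighted heat semigroup $T_t f = p(t) * f$, self-adjointness and the semigroup identity $T_h = T_{h/2} T_{h/2}$ give
\[ d_h^2(u, \tilde u) = 2\sqrt h \sum_{i=1}^P \bigl\| T_{h/2}(u_i - \tilde u_i) \bigr\|_{L^2(\mu)}^2. \]
Strict positivity of $p(h,\cdot,\cdot)$ makes $T_{h/2}$ injective on $L^2(\mu)$, so $d_h$ is a genuine metric. The set $\mathcal{M}$ is weak*-compact in $L^\infty(\mu)$, and since $T_{h/2}\colon L^2(\mu) \to L^2(\mu)$ is a compact smoothing operator, any weak* convergent sequence $u^n \rightharpoonup u$ in $\mathcal{M}$ satisfies $T_{h/2} u^n_i \to T_{h/2} u_i$ strongly in $L^2(\mu)$; hence $(\mathcal{M}, d_h)$ is sequentially compact. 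The identities $E_h(u) = \tfrac{1}{\sqrt h}\bigl(1 - \sum_i \|T_{h/2} u_i\|_{L^2(\mu)}^2\bigr)$ together with $\int_M u_i\, d\mu = \int_M T_{h/2} u_i\, d\mu$ (using $T_{h/2} \mathbf{1} = \mathbf{1}$) show that $E_h$ and the Lagrange term both depend continuously on $(T_{h/2} u_i)_i$ in $L^2(\mu)$, so $E_h^\ell$ is $d_h$-continuous.

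There is no serious obstacle; the only substantive observation is the cancellation between the quadratic parts of $E_h$ and $d_h^2$ produced by the heat-semigroup normalization $T_h \mathbf{1} = \mathbf{1}$, which reduces the minimizing-movement step to a pointwise linear program over the simplex.
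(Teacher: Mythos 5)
Your proposal is correct. For the first part, the paper simply cites \cite[Lemma 8]{kramer2024efficient} for the minimizing-movement relation \eqref{def:mms}, whereas you derive it explicitly by exhibiting the cancellation of the quadratic terms of $E_h$ and $\tfrac{1}{2h}d_h^2$ using $\sum_i u_i \equiv 1$, $\int p(h,x,y)\,d\mu(y)=1$ and the symmetry of $p$; your computation is correct (and you rightly read the $(u_i-u_j)$ in \eqref{def:dist_h} as a typo for $(u_i-\tilde u_i)$). For the metric/compactness/continuity part your argument is the same one the paper makes, namely that $d_h$ metrizes weak convergence on $\mathcal{M}$; you phrase it via writing $d_h^2(u,\tilde u)=2\sqrt h\sum_i\|T_{h/2}(u_i-\tilde u_i)\|_{L^2(\mu)}^2$ and invoking compactness of $T_{h/2}$ on $L^2(\mu)$ (so weak* convergence in $\mathcal M$ gives strong $L^2$ convergence of $T_{h/2}u^n_i$), whereas the paper runs essentially the same reasoning by observing that $p(h)*\xi$ lies in a bounded subset of $H^1(M,\mu)$ and hence is strongly $L^2$-precompact, then uses a separability/identification argument to get the reverse implication. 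Both buy the same conclusion; your operator-theoretic packaging is marginally more economical and has the side benefit of showing directly that $d_h$ is a norm-distance (hence the triangle inequality is immediate), a point the paper leaves implicit under ``positive definiteness of the heat kernel.''
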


Having a minimizing movement scheme \eqref{eq:mms_classic} at hand, the maybe naive approach would be to plug in $u^{\ell -1}$ as competitor. This leads first to 
\begin{equation}\label{eq:triv_inequality}
  E(u^\ell) + \frac{1}{2h}d^2(u^\ell, u^{\ell-1}) \leq E(u^{\ell-1})
\end{equation}
and after successively application then to $E(u^L) + \frac{h}{2} \sum_{\ell = 1}^L \left(\frac{d(u^\ell, u^{\ell -1})}{h}\right)^2 \leq E(u^0) $. This is the right type of equation in regard of \eqref{eq:deGiorgi} but a factor $2$ is missing as at least heuristically $\frac{d_h(\chi_h^\ell, \chi_h^{\ell -1})}{h} \rightarrow |\partial_t \chi|$. To solve this problem, another interpolation than the constant is introduced. The more intrinsic  
\begin{equation}\label{def:variation_interpolation}
  u(t) \in \argmin_{u \in \mathcal{M}} E^\ell(u) + \frac{1}{2(t- (\ell - 1)h)}d^2(u, u^{\ell-1}) \quad \text{for } t \in ((\ell -1 )h, \ell h]
\end{equation}
is called the ``variational interpolation''. After introducing \[e(t):= E^\ell(u(t)) + \frac{1}{2(t - (\ell-1)h)}d^2(u(t), u^{\ell -1 })\] and using that $u(t)$ is a critical point of $E^\ell + \frac{1}{2(t - (\ell-1)h)}d^2(\cdot, u^{\ell - 1})$ it holds on a heuristic level
\begin{equation*}
  \frac{d}{dt} e(t) = - \frac{1}{2(t- (\ell -1 )h)^2} d^2(u(t), u^{\ell -1 }).
\end{equation*}
Integrating in time from $(\ell -1)h$ to $\ell h$ yields
\begin{equation}\label{eq:heuristic_dedi}
  E^\ell(u^\ell) + \frac{1}{2h} d^2(u^\ell, u^{\ell -1}) + \frac{1}{2}\int_{h(\ell - 1)}^{h\ell} \frac{1}{2(t- (\ell- 1)h)^2} d^2(u(t), u^{\ell -1}) \, dt = E^\ell(u^{\ell - 1})
\end{equation}
which is exactly \eqref{eq:triv_inequality} but with the previously missing gradient term. To see this one uses heuristically the ``Euler--Lagrange equation'' of the definition of the variational interpolation~\eqref{def:variation_interpolation} to recognize the extra term is given by
\begin{equation*}
  \frac{1}{2}\int_{h(\ell - 1)}^{h\ell} \frac{1}{2(t- (\ell- 1)h)^2} d^2(u(t), u^{\ell -1}) \, dt = \frac{1}{2}\int_{h(\ell - 1)}^{h\ell} |\nabla E^\ell|^2 (u(t)) \, dt
\end{equation*}
which will later pick up the $H + \Lambda + \langle \mathbf{n}, \nabla \rho \rangle$ term of \eqref{eq:deGiorgi}.

The rigorous results for the previously introduced ideas are given in the next lemma where one needs a extra property of the scheme \eqref{eq:volumeMBOscheme} to be able to iterate \eqref{eq:heuristic_dedi}; namely, $E(\chi^\ell) - E(\chi^{\ell-1}) = E^\ell(\chi^\ell) - E^\ell(\chi^{\ell-1})$ holds as the volume of the iterates stays constant.
\begin{lemma}[Discrete Energy Dissipation Inequality]\label{lem:dedi}
  Let $(\mathcal{M}, d)$ be a metric space and $\{E^\ell:\mathcal{M}\rightarrow \R\}_{\ell \in \N}$ be continuous.
  Given $\chi^0 \in \mathcal{M}$ and $h > 0$ consider a sequence $\{\chi^\ell\}_{\ell\in\N}$ satisfying
  \begin{equation}\label{eq:min_move}
    \chi^\ell \text{ minimizes } \frac{1}{2h}d^2(u, \chi^{\ell-1}) + E^\ell(u) \quad \text{ among all } u \in \mathcal{M}.
  \end{equation}
  Then we have for all $t = \ell h$ with $ \ell \in \N$
  \begin{equation} \label{eq:fdedi}
    E^\ell(\chi(t)) + \frac{1}{2}\int_{t-h}^t \left(\frac{1}{h^2}d^2(\chi(s+h), \chi(s) ) +
    |\partial E^\ell(u(s))|^2 \right) ds \leq E^\ell(\chi(t-h)).
  \end{equation}
  Here $\chi(t)$ is the piecewise constant interpolation, cf.~\eqref{eq:piecewise_interpolation},
  $u(t)$ is the variational interpolation~\eqref{def:variation_interpolation} satisfying 
  \begin{align}
    d^2(u(t), \chi^{\ell-1}) \leq d^2(\chi^\ell, \chi^{\ell-1})  \quad \text{ for all } t \in ((\ell-1)h, \ell h], \label{eq:compare_dist}\\
    E^\ell(u(t)) \leq E^\ell(\chi(t)) \quad \text{ for all } t \in ((\ell-1)h, \ell h],\label{eq:vi_prop}
  \end{align}
  and $|\partial E|$ is the ``metric slope'' defined through
  \begin{equation}
    |\partial E|(u) := \limsup_{v:d(v,u) \rightarrow 0} \frac{(E(u) - E(v))_+}{d(u,v)} \in [0, \infty].
  \end{equation}

  In particular if for some energy $E$ it holds $E(\chi^\ell) - E(\chi^{\ell-1}) = E^\ell(\chi^\ell) - E^\ell(\chi^{\ell-1})$
  then it additionally holds for all $t \in \N h$
  \begin{align}
    E(\chi(t)) +  \sum_{\ell = 1}^{\frac{t}{h}} \frac{1}{2}\int_{(\ell-1)h}^{\ell h} \left(\frac{1}{h^2}d^2(\chi(s+h), \chi(s) ) +
    |\partial E^\ell(u(s))|^2 \right) ds &\leq E(\chi^0), \label{eq:dedi}\\
    \int_0^\infty \frac{1}{2h^2} d^2(u(t), \chi(t)) \,dt &\leq E(\chi^0). \label{eq:bound_dist}
  \end{align} 
\end{lemma}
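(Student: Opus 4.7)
My plan is to carry out the classical De Giorgi argument for minimizing movements, adapted to the iteration-dependent energies $E^\ell$. I would first establish the two comparison properties \eqref{eq:compare_dist} and \eqref{eq:vi_prop} of the variational interpolation. For any $t \in ((\ell-1)h, \ell h]$ set $\tau := t - (\ell-1)h \in (0, h]$. Since $\chi(t) = \chi^{\ell-1}$ on the open interval and $u(\ell h) = \chi^\ell = \chi(\ell h)$, inserting $\chi^{\ell-1}$ itself as a competitor in \eqref{def:variation_interpolation} immediately yields $E^\ell(u(t)) \leq E^\ell(\chi^{\ell-1}) = E^\ell(\chi(t))$, which is \eqref{eq:vi_prop}. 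For \eqref{eq:compare_dist} I would swap competitors between the two minimization problems --- $u(t)$ minimizes at step $\tau$, $\chi^\ell$ at step $h$. Adding the two resulting inequalities cancels the energies and reduces to
\[
\Bigl(\frac{1}{2\tau} - \frac{1}{2h}\Bigr)\bigl[d^2(u(t), \chi^{\ell-1}) - d^2(\chi^\ell, \chi^{\ell-1})\bigr] \leq 0,
\]
which together with $\tau \leq h$ forces \eqref{eq:compare_dist}.

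The single-step dissipation \eqref{eq:fdedi} I would derive via a Moreau--Yosida-type derivative identity. For $\tau \in (0, h]$ set $\phi(\tau) := E^\ell(u((\ell-1)h + \tau)) + \frac{1}{2\tau}d^2(u((\ell-1)h + \tau), \chi^{\ell-1})$. Repeating the competitor-swap argument at two times $\tau_1 < \tau_2$ in $(0, h]$ sandwiches $\phi(\tau_1) - \phi(\tau_2)$ between $d^2(u((\ell-1)h+\tau_1), \chi^{\ell-1})(\tau_2-\tau_1)/(2\tau_1\tau_2)$ (below) and the same expression with $\tau_1$ replaced by $\tau_2$ (above), which shows $\phi$ is non-increasing and locally Lipschitz on $(0, h]$, with a.e.\ derivative $\phi'(\tau) = -d^2(u((\ell-1)h+\tau), \chi^{\ell-1})/(2\tau^2)$. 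Using $\chi^{\ell-1}$ as competitor gives $\phi \leq E^\ell(\chi^{\ell-1})$, and by continuity of $E^\ell$ on the compact $\mathcal{M}$ (Lemma \ref{lem:mmi}) this forces $d^2(u((\ell-1)h+\tau), \chi^{\ell-1}) \leq C\tau$, hence $\phi(0^+) = E^\ell(\chi^{\ell-1})$. Integrating $\phi'$ on $(0, h]$ produces the identity
\[
E^\ell(\chi^\ell) + \frac{1}{2h}d^2(\chi^\ell, \chi^{\ell-1}) + \int_{(\ell-1)h}^{\ell h} \frac{d^2(u(s), \chi^{\ell-1})}{2(s - (\ell-1)h)^2}\,ds = E^\ell(\chi^{\ell-1}).
\]
The metric-slope bound $|\partial E^\ell|(u(t)) \leq d(u(t), \chi^{\ell-1})/(t - (\ell-1)h)$, derived from minimality via $d^2(v, \chi^{\ell-1}) - d^2(u(t), \chi^{\ell-1}) \leq d(v, u(t))[d(v, \chi^{\ell-1}) + d(u(t), \chi^{\ell-1})]$ and passage to the $\limsup$ as $d(v, u(t)) \to 0$, turns the integral into a lower bound by $\tfrac{1}{2}\int_{t-h}^t |\partial E^\ell|^2(u(s))\,ds$. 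Rewriting $\frac{1}{2h}d^2(\chi^\ell, \chi^{\ell-1}) = \tfrac{1}{2}\int_{t-h}^t d^2(\chi(s+h), \chi(s))/h^2\,ds$ then yields \eqref{eq:fdedi}.

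Passing to the cumulative form is where the iteration-dependent energies would ordinarily obstruct telescoping, and this is precisely the point at which the hypothesis $E(\chi^\ell) - E(\chi^{\ell-1}) = E^\ell(\chi^\ell) - E^\ell(\chi^{\ell-1})$ enters: summing \eqref{eq:fdedi} over $\ell = 1, \ldots, t/h$, it lets me replace each single-step drop $E^\ell(\chi^{\ell-1}) - E^\ell(\chi^\ell)$ by $E(\chi^{\ell-1}) - E(\chi^\ell)$, which telescopes cleanly to $E(\chi^0) - E(\chi(t))$ and gives \eqref{eq:dedi}. Finally, \eqref{eq:bound_dist} follows by combining \eqref{eq:compare_dist} --- which on each subinterval gives $d(u(t), \chi(t)) = d(u(t), \chi^{\ell-1}) \leq d(\chi^\ell, \chi^{\ell-1})$ --- with the crude summed dissipation $\sum_\ell d^2(\chi^\ell, \chi^{\ell-1})/(2h) \leq E(\chi^0)$ obtained by iterating \eqref{eq:triv_inequality} (using $E \geq 0$). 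The one non-routine technical point is establishing the Lipschitz regularity and a.e.\ derivative identity for $\phi$ via the competitor-swap sandwich; the remainder is bookkeeping made consistent by the compatibility hypothesis.
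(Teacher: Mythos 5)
Your proof is correct and follows the same classical De Giorgi/Ambrosio--Gigli--Savar\'e argument that the paper invokes by citing \cite[Theorem 3.1.4, Lemma 3.1.3]{greenbook}; you simply reconstruct in full the competitor-swap comparisons, the monotonicity and a.e.\ derivative identity for the Moreau--Yosida function $\phi$, and the metric-slope bound $|\partial E^\ell|(u(\tau))\le d(u(\tau),\chi^{\ell-1})/\tau$, rather than quoting them. Your treatment of the telescoping step under the compatibility hypothesis and the derivation of \eqref{eq:bound_dist} from \eqref{eq:compare_dist} together with the crude iterated bound $\sum_\ell d^2(\chi^\ell,\chi^{\ell-1})/(2h)\le E(\chi^0)$ match the paper's stated route exactly.
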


Combing Lemma \ref{lem:mmi} and Lemma \ref{lem:dedi} the idea is to pass to the limit $h \downarrow 0$ in the discrete energy dissipation inequality \eqref{eq:dedi}. For that, convergence of the interpolations is needed in some topology. It turns out $L^1(M \times (0,T))$-convergence is sufficient for that and ensured by the following compactness result.

\begin{lemma}[Compactness]\label{lem:Compactness}
 Let $\{\chi_h\}_{h \downarrow 0}, \sum_{i = 1}^P \chi_i = 1$ a.e., be a sequence of functions on $(0,T) \times M$ with values
  in $\{0,1\}^P$, having the property of being piecewise constant as in \eqref{eq:piecewise_interpolation}
  and 
  \begin{align}\label{eq:ass_compactness}
     \esssup_{t \in (0,T)} E_h(\chi_h(t)) + \int_0^T \frac{1}{2h^2} d_h^2(\chi_h(t), \chi_h(t-h))
     \, dt \quad \text{stays bounded as } h \downarrow 0.
  \end{align}
  Then $\{\chi_h\}_{h \downarrow 0}$ is pre-compact in $L^1((0,T) \times M)$ and furthermore the 
  Gauss-Green measure $\nabla \chi$ of any (weak) limit $\chi$ is bounded, equi-integrable in $t$
  with 
  \begin{equation}\label{eq:lowersemicontinuity_energy}
    c_0 \int_{(0,T) \times M} \rho \, |\nabla \chi| \,dt \leq \liminf_{h \downarrow 0} \int_0^T 
    E_h(\chi_h(t)) \, dt.
  \end{equation}
\end{lemma}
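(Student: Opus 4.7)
The plan is a compactness argument of Aubin--Lions / Simon type: the essential-sup energy bound will yield uniform BV regularity in space (via the heat-kernel characterisation of the weighted BV seminorm), while the metric bound will yield a uniform Hölder regularity in time in a dual norm dominated by $d_h$. Combined, these provide $L^1((0,T)\times M)$ pre-compactness of $\{\chi_h\}$ by Fréchet--Kolmogorov. The sharp lower semicontinuity \eqref{eq:lowersemicontinuity_energy} will then follow from pointwise-in-$t$ lower semicontinuity of $E_h$ and Fatou's lemma.

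\textbf{Spatial regularity.} The thresholding energy $E_h$ is a heat-kernel--based nonlocal perimeter whose $\Gamma$-limit was identified in \cite{laux2021large} as $c_0 \sum_i \int_M \rho\,|\nabla \chi_i|$. The quantitative ingredient I need is that $E_h$ controls an $L^1$-modulus of continuity under small spatial translations,
\[
\sum_i \int_M \bigl|\chi_{h,i}(t,\exp_x v) - \chi_{h,i}(t,x)\bigr|\,\rho\,d\Vol \lesssim |v|\,\bigl(E_h(\chi_h(t)) + o_h(1)\bigr),
\]
for tangent vectors $v$ down to scales $\sqrt h$. This is the manifold analogue of the Miranda--Pallara--Paolini--Preunkert identity; the only additional tool beyond the Euclidean case is the Gaussian comparison $p(h,x,y)\sim G_h(x,y)$ from Chapter \ref{cha:estimates_kernel}. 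Integrating in $t$ and using $\esssup_t E_h(\chi_h(t)) \leq C$ then gives a $t$-integrated spatial modulus of continuity that vanishes uniformly in $h$ as $|v|\to 0$.

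\textbf{Temporal regularity and compactness.} The key observation for temporal control is that $d_h$ dominates an $H^{-1/2}_h$-type norm of $u - \tilde u$; unfolding the kernel via the semigroup property $p(h/2)*p(h/2) = p(h)$ and Cauchy--Schwarz gives an interpolation bound of the schematic form
\[
\|u - \tilde u\|_{L^1(\mu)} \lesssim h^{-1/4}\, d_h(u,\tilde u) + \sqrt h \,\bigl(E_h(u) + E_h(\tilde u)\bigr),
\]
where the first term bounds the low-frequency part of $u-\tilde u$ (those Fourier modes with $|\xi|\lesssim 1/\sqrt h$) and the second controls the high-frequency remainder by the energy. Summing telescopically over the time-steps between $s$ and $t$ using Cauchy--Schwarz with the $L^2_t$-bound on $d_h(\chi_h(t),\chi_h(t-h))/h$ and the $L^\infty_t$-bound on $E_h$ yields $\int_0^{T-\tau} \|\chi_h(t+\tau) - \chi_h(t)\|_{L^1(\mu)}\,dt \to 0$ uniformly in $h$ as $\tau \downarrow 0$. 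Combined with the spatial modulus of the previous paragraph, Fréchet--Kolmogorov then provides $L^1((0,T)\times M)$ pre-compactness of $\{\chi_h\}$. Any $L^1$-limit is $\{0,1\}^P$-valued a.e.\ since this discrete range is closed under $L^1$-convergence.

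\textbf{Conclusion, equi-integrability, and main obstacle.} Fatou's lemma applied to the pointwise-in-$t$ $\Gamma$-liminf $\liminf_h E_h(\chi_h(t)) \geq c_0 \sum_i \int_M \rho\,|\nabla \chi_i(t)|$ (following from Lemma~\ref{lem:conv_densities}, or equivalently the global result of \cite{laux2021large}) yields \eqref{eq:lowersemicontinuity_energy}. The same pointwise inequality combined with $\esssup_t E_h(\chi_h(t)) \leq C$ provides an $L^\infty_t$-bound on $\sum_i \int_M \rho\,|\nabla \chi_i(t)|$, which is strictly stronger than equi-integrability in~$t$. I expect the main obstacle to be the quantitative temporal estimate: the naive summation of the interpolation bound above loses positive powers of $h$ over the $(t-s)/h$ time-steps between $s$ and $t$, so one must either split $u-\tilde u$ dyadically in frequency (low-frequency mass summed via Cauchy--Schwarz in $d_h$, high-frequency mass controlled by $\sqrt h E_h$ but appearing in only finitely many increments after cancellation) or invoke an abstract Aubin--Lions lemma once the BV-in-space and dual-norm-Hölder-in-time bounds are in place. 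The sharp prefactor $c_0 = 1/\sqrt\pi$ in the spatial $\Gamma$-liminf is the other delicate point, as the curvature of $M$ and the variations of $\rho$ distort $p(h,x,y)$ away from the Euclidean Gaussian at scale $\sqrt h$; the derivative estimates of the heat kernel in Chapter~\ref{cha:estimates_kernel} are what allow these distortions to be absorbed into the $o_h(1)$-remainder.
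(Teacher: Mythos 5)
Your overall architecture is recognisably the same as the paper's (temporal control from the metric bound, spatial control from the energy bound, combined via an $L^1$-compactness criterion, lower bound from $\Gamma$-liminf), but both ingredients are executed differently from the paper and the temporal step, which you yourself flag as the main obstacle, has a genuine gap as written.

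\textbf{Spatial part: a different but viable route.} You propose to establish an $L^1$-translation modulus under $\exp_x v$ directly from $E_h$, in the spirit of Miranda--Pallara--Paolini--Preunkert, and then invoke Fr\'echet--Kolmogorov. On a closed manifold the translation operator is not global, and the paper instead sidesteps this by noting that $\chi_h$ is $O(\sqrt{h_0})$-close in $L^1(\mu)$ to the regularised function $p(h_0)*\chi_h$ for any \emph{fixed} $h_0$ (using \eqref{eq:kernel_diff} and the monotonicity/almost-monotonicity \eqref{eq:approx_mon}), and then proves pre-compactness of $\{p(h_0)*\chi_h\}_h$ via an abstract conditional-expectation criterion for $L^1$ on a finite measure space. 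The spatial smoothness needed for the criterion then comes for free from the fixed kernel $p(h_0)$. Your route is workable if you carefully localise the translation estimate via the exponential map, but the paper's is substantially cleaner on a manifold and avoids any quantitative heat-kernel comparison at the scale $\sqrt h$ for this particular step.

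\textbf{Temporal part: the claimed interpolation bound has the wrong homogeneity, and the telescoping as described fails.} You write $\|u-\tilde u\|_{L^1(\mu)} \lesssim h^{-1/4} d_h(u,\tilde u) + \sqrt h\,(E_h(u)+E_h(\tilde u))$, i.e.\ a bound \emph{linear} in $d_h$. The estimate that the paper actually uses is \emph{quadratic} in $d_h$,
\[
\sum_i \int_M |\chi_i - \tilde\chi_i|\,d\mu \;\leq\; \frac{1}{2\sqrt h}\,d_h^2(\chi,\tilde\chi) \;+\; 2\sqrt h\,\bigl(E_h(\chi)+E_h(\tilde\chi)\bigr),
\]
which is \eqref{eq:l1_estimate}. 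This difference is not cosmetic. With your linear bound, summing over the $N=s/h$ steps between times $t-s$ and $t$ and applying Cauchy--Schwarz in the index $n$ gives roughly $h^{-1/4}\sqrt N\,\bigl(\sum_n d_h^2\bigr)^{1/2}$; using $\int_h^T \tfrac{1}{2h^2}d_h^2\,dt \leq C$, i.e.\ $\sum_n d_h^2 \lesssim h^2$, one is left with a bound of order $s\,h^{-1/4}$, which diverges as $h\downarrow0$ for fixed $s$. With the quadratic bound instead, the triangle inequality $d_h^2(\chi(t),\chi(t-Nh)) \leq N\sum_{n=1}^N d_h^2(\chi(t-(n-1)h),\chi(t-nh))$ combined with the same time-integrated $d_h^2/h^2$-bound gives a contribution of order $(s/h)^2\,h^{-1/2}\,h^2 = s^2 h^{-1/2}$, which for $s\leq\sqrt h$ is $\lesssim\sqrt h$; the remaining ranges $s\leq h$ and $s\geq\sqrt h$ are handled by a short case analysis, yielding the uniform modulus $\mathcal I(s)\lesssim C_0\sqrt s$. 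Neither a dyadic frequency split nor an off-the-shelf Aubin--Lions lemma (which you offer as fallbacks) is needed; the fix is precisely the squared form of the $L^1$-interpolation inequality together with the three-regime analysis in $s$.

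\textbf{Minor points.} Your reasoning that the pointwise-in-$t$ liminf inequality plus $\esssup_t E_h(\chi_h(t))\leq C$ gives an $L^\infty_t$ bound on $\sum_i\int_M\rho\,|\nabla\chi_i(t)|$, and hence equi-integrability, is correct. The lower bound \eqref{eq:lowersemicontinuity_energy} via Fatou and the $\Gamma$-liminf is what the paper does (citing \cite{laux2021large} and re-deriving a localised version in Lemma~\ref{lem:conv_densities}), so that piece is sound.
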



\usetikzlibrary {fadings,patterns}
\begin{tikzfadingfrompicture}[name=fade right with circle]
  \shade[left color=transparent!0,
         right color=transparent!100] (0,0) rectangle (2,2);
  \fill[transparent!50] (1,1) circle (0.7);
\end{tikzfadingfrompicture}
 
\tikzset{
pattern size/.store in=\mcSize, 
pattern size = 5pt,
pattern thickness/.store in=\mcThickness, 
pattern thickness = 0.3pt,
pattern radius/.store in=\mcRadius, 
pattern radius = 1pt}
\makeatletter
\pgfutil@ifundefined{pgf@pattern@name@_nmqhmj3uw}{
\pgfdeclarepatternformonly[\mcThickness,\mcSize]{_nmqhmj3uw}
{\pgfqpoint{0pt}{0pt}}
{\pgfpoint{\mcSize+\mcThickness}{\mcSize+\mcThickness}}
{\pgfpoint{\mcSize}{\mcSize}}
{
\pgfsetcolor{\tikz@pattern@color}
\pgfsetlinewidth{\mcThickness}
\pgfpathmoveto{\pgfqpoint{0pt}{0pt}}
\pgfpathlineto{\pgfpoint{\mcSize+\mcThickness}{\mcSize+\mcThickness}}
\pgfusepath{stroke}
}}
\makeatother
\tikzset{every picture/.style={line width=0.75pt}} 

\begin{figure}
\begin{tikzpicture}[x=0.75pt,y=0.75pt,yscale=-1,xscale=1]

\draw  [fill={rgb, 255:red, 126; green, 211; blue, 33 }  ,fill opacity=1 ] (206.45,218.35) .. controls (176.45,212.35) and (126.45,91.35) .. (207.45,75.35) .. controls (288.45,59.35) and (351.45,50.35) .. (397.45,51.35) .. controls (443.45,52.35) and (532.45,141.35) .. (502.45,159.35) .. controls (472.45,177.35) and (506.45,223.35) .. (438.45,220.35) .. controls (370.45,217.35) and (300.45,234.35) .. (275.45,239.35) .. controls (250.45,244.35) and (236.45,224.35) .. (206.45,218.35) -- cycle ;
\draw  [fill={rgb, 255:red, 74; green, 144; blue, 226 }  ,fill opacity=1 ] (135.45,110.35) .. controls (142.45,96.35) and (165.45,86.35) .. (207.45,75.35) .. controls (249.45,64.35) and (228.45,82.35) .. (228.45,94.35) .. controls (209.45,108.35) and (236.45,225.35) .. (201.45,217.35) .. controls (182.45,215.35) and (170.58,202.73) .. (155.45,188.35) .. controls (140.33,173.98) and (152.45,155.35) .. (137.45,150.35) .. controls (122.45,145.35) and (128.45,124.35) .. (135.45,110.35) -- cycle ;
\draw  [fill={rgb, 255:red, 255; green, 255; blue, 255 }  ,fill opacity=1 ] (309.45,136.35) .. controls (331.45,117.35) and (409.45,121.35) .. (422.45,136.35) .. controls (402.45,156.35) and (389.45,157.35) .. (369.45,158.35) .. controls (348.45,158.35) and (328.45,155.35) .. (309.45,136.35) -- cycle ;
\draw    (298.45,125.35) -- (309.45,136.35) ;
\draw    (422.45,136.35) -- (431.45,126.35) ;

\draw  [fill={rgb, 255:red, 245; green, 166; blue, 35 }  ,fill opacity=1 ] (207.45,75.35) .. controls (197.45,76.35) and (359.45,48.35) .. (372.45,52.35) .. controls (385.45,56.35) and (388.45,64.35) .. (408.45,94.35) .. controls (428.45,124.35) and (252.45,94.35) .. (228.45,94.35) .. controls (221.9,83.7) and (217.45,74.35) .. (207.45,75.35) -- cycle ;
\draw  [color={rgb, 255:red, 155; green, 155; blue, 155 }  ,draw opacity=1 ][pattern=_nmqhmj3uw,pattern size=6pt,pattern thickness=0.75pt,pattern radius=0pt, pattern color=gray, opacity=0.35] (192.92,43.72) -- (65.09,120.98) -- (232.84,274.77) -- (360.67,197.51) -- cycle ;
\draw  [dash pattern={on 4.5pt off 4.5pt}] (162.45,159.35) .. controls (164.45,138.35) and (188.45,80.35) .. (226.45,100.35) .. controls (264.45,120.35) and (286.45,146.35) .. (277.45,168.35) .. controls (268.45,190.35) and (239.45,204.35) .. (216.45,206.35) .. controls (193.45,208.35) and (160.45,180.35) .. (162.45,159.35) -- cycle ;
\draw  [fill={rgb, 255:red, 0; green, 0; blue, 0 }  ,fill opacity=1 ] (218,154.73) .. controls (218,153.22) and (219.22,152) .. (220.73,152) .. controls (222.23,152) and (223.45,153.22) .. (223.45,154.73) .. controls (223.45,156.23) and (222.23,157.45) .. (220.73,157.45) .. controls (219.22,157.45) and (218,156.23) .. (218,154.73) -- cycle ;
\draw  [fill={rgb, 255:red, 0; green, 0; blue, 0 }  ,fill opacity=1, path fading=fade right with circle ] (276.75,157.73) .. controls (276.75,156.22) and (277.97,155) .. (279.48,155) .. controls (280.98,155) and (282.2,156.22) .. (282.2,157.73) .. controls (282.2,159.23) and (280.98,160.45) .. (279.48,160.45) .. controls (277.97,160.45) and (276.75,159.23) .. (276.75,157.73) -- cycle ;
\draw    (220.73,154.73) .. controls (248.44,174.58) and (266.79,167.03) .. (277.95,158.88) ;
\draw [shift={(279.48,157.73)}, rotate = 142.12] [color={rgb, 255:red, 0; green, 0; blue, 0 }  ][line width=0.75]    (10.93,-3.29) .. controls (6.95,-1.4) and (3.31,-0.3) .. (0,0) .. controls (3.31,0.3) and (6.95,1.4) .. (10.93,3.29)   ;

\draw [color={rgb, 255:red, 155; green, 155; blue, 155 }  ,draw opacity=1 ]   (220.73,154.73) -- (275.4,196.63) ;
\draw [shift={(276.99,197.85)}, rotate = 217.47] [color={rgb, 255:red, 155; green, 155; blue, 155 }  ,draw opacity=1 ][line width=0.75]    (10.93,-3.29) .. controls (6.95,-1.4) and (3.31,-0.3) .. (0,0) .. controls (3.31,0.3) and (6.95,1.4) .. (10.93,3.29)   ;

\draw (237.67,144.54) node [anchor=north west][inner sep=0.75pt]    {$\gamma _{x,y}$};
\draw (204.42,142.54) node [anchor=north west][inner sep=0.75pt]    {$x$};
\draw (289.92,143.54) node [anchor=north west][inner sep=0.75pt]    {$y$};
\draw (272.42,198.89) node [anchor=north west][inner sep=0.75pt]    {$\textcolor{gray}{z}$};
\draw (77.5,69.65) node [anchor=north west][inner sep=0.75pt]    {$\textcolor{gray}{T_{x} M}$};
\end{tikzpicture}
\caption{Illustration of coordinate change used to describe the blow up in Lemma \ref{lem:conv_densities}.}
\label{fig:coodinate_change}
\end{figure}
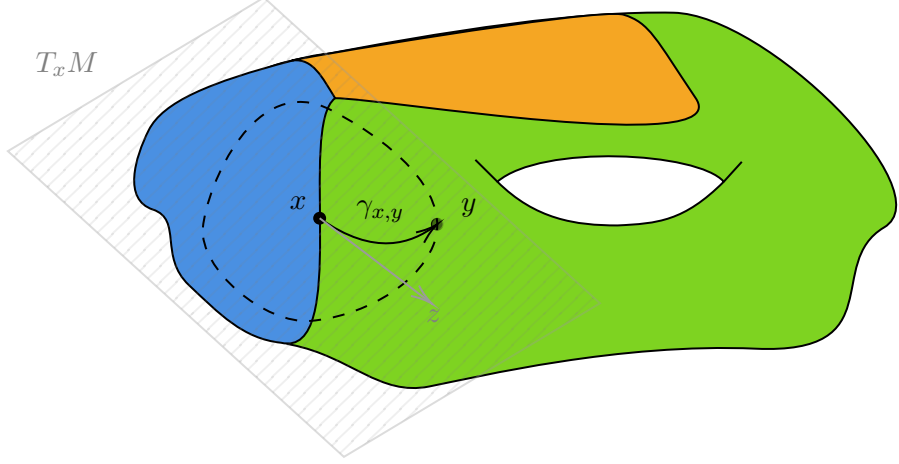

With the aim of passing later to the limit $h \downarrow 0$ in the thresholding energy \eqref{def:energy_h} and its variations, it will be necessary to pass to the limit in the energy densities 
\begin{equation}
      \eta_h(x,y) = \frac{1}{\sqrt{h}}\mathbbold{1}_{\dist(x,y)\leq \inj}p(h,x,y) u_{h,i}(x,t)(1-u_{h,i})(y,t).
\end{equation}
But as the heat kernel convergences (distributionally) to the delta distribution, i.e, $p(h,x,y) \rightarrow \delta_x(y)$ for $h \downarrow 0$, the variables $x$ and $y$ collapse in the limit. Hence, it is necessary to change the coordinates to $x$ and $z := \frac{\exp^{-1}_x(y)}{\sqrt{h}} \in T_x M$; see Figure \ref{fig:coodinate_change}. The tangential vector $z$ describes the direction and velocity of the blow up  of the heat kernel in its natural scaling $\sqrt{h}$. The map
\begin{align*}
  \Phi_h:\{(x,y) \in M^2: \dist(x,y) \leq \inj\} &\rightarrow TM^2,\\
   (x,y) &\mapsto \big((x,\frac{\exp_x^{-1}(y)}{\sqrt{h}}), (y,\frac{\exp_y^{-1}(x)}{\sqrt{h}})\big)
\end{align*}
describes exactly the coordinate change where the blow up in the $y$ variable is also tracked for technical convenience in later steps. The following lemma states now the convergence of the densities in the sense of weak convergence of the measure 
\begin{equation}\label{def:measure_energy}
  \mathfrak{m}_h = {\Phi_h}_{\#}\big(\eta_h(x,y) \dmy \dmx \big) \,dt.
\end{equation}
The limiting measure is similarly described by 
\begin{equation}\label{def:measure_limit}
    \mathfrak{m} = {\Phi}_{\#}\big(  \eta(z,x) \,dz |\nabla \chi_i|(x) \big) \,dt,
\end{equation}
where $dz$ denotes the Lebesgue measure on the tangent space $T_x M$, while $\Phi$ is just for technical reasons to track the blow up in the $x$ and $y$ components 
\begin{equation*}
    \Phi: TM \rightarrow TM^2, (x,z) \mapsto \big((x,z),(x, -z)\big)
\end{equation*}
and $\eta$ is the limiting density in the $x$ and $z$ variable 
\begin{align*}
  \eta(x,z) =  G(z) \left(\langle \mathbf{n}_i(x), z \rangle_x \right)_+  \rho(x).
\end{align*}

\begin{lemma}[Convergence of the Energy Densities]\label{lem:conv_densities}
  Let $\{u_h\}_{h \downarrow 0}$ be a sequence of $[0,1]$-valued functions on $(0,T) \times M$
  that convergences weakly in $L^1(M \times (0,T))$ to a $\{0,1\}$-valued function $\chi$, i.e.
  \begin{align}
    u_h \xrightharpoonup{h \rightarrow 0} \chi \quad \text{ in } L^1((0,T) \times M),
  \end{align}
  and \eqref{ass:energy_conv} holds with $\chi_h$ replaced by $u_h$.
  Furthermore, assume
  \begin{align}\label{ass:energy_bound}
    \esssup_{t \in (0,T)} E_h(u_h(t)) \quad \text{stays bounded as } h \downarrow 0.
  \end{align}
  Then the measures defined in \eqref{def:measure_energy} and \eqref{def:measure_limit} converge
  \begin{equation}\label{eq:conv_measure}
     \mathfrak{m}_h \xrightharpoonup{h \downarrow 0} \mathfrak{m} \quad \text{weakly-}* \text{ as measures on }TM^2,
  \end{equation}
  where the growth in the above described tangential variable $z$ can even be polynomial.
\end{lemma}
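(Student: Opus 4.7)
The plan is to follow the general strategy of \cite{MR4385030} for the two-phase Euclidean case and extend it to the weighted manifold setting, exploiting the natural phase-by-phase decoupling of the thresholding energy to avoid the geometric measure theory machinery of \cite{MR4358242}.

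\emph{Step 1 (Tightness with polynomial moments in $z$).} I first show that $\{\mathfrak{m}_h\}$ is uniformly bounded on $TM^2$: mass on bounded sets of the tangent variable is controlled by $\|E_h(u_h)\|_{L^1(0,T)}$ via \eqref{ass:energy_bound}, and the tails in $|z|$ are controlled by the Gaussian upper bound on the heat kernel from Chapter \ref{cha:estimates_kernel}, which after the rescaling $y=\exp_x(\sqrt h z)$ yields $\frac{h^{d/2}}{\sqrt h} p(h,x,\exp_x(\sqrt h z)) \lesssim e^{-c|z|^2}$. Therefore any polynomial moment in $z$ is uniformly bounded and we can extract a weak-$*$ subsequential limit $\tilde{\mathfrak m}$.

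\emph{Step 2 (Lower semicontinuity, the main step).} For every non-negative continuous $\phi$ of polynomial growth I would prove $\liminf_{h \to 0} \int \phi\, d\mathfrak m_h \geq \int \phi\, d\mathfrak m$. I would proceed in three sub-steps: (i) replace $p(h,x,y)$ by its Gaussian approximation $G_h$, with negligible error by Chapter \ref{cha:estimates_kernel}; (ii) change variables via $y = \exp_x(\sqrt h z)$ so that the rescaled kernel becomes $G(z)$, and the exponential-map Jacobian together with the continuity of $\rho$ deliver a factor $\rho(x)$ in the limit, leaving the quantity $\frac{1}{\sqrt h}u_h(x)(1-u_h(\exp_x(\sqrt h z)))$; (iii) for each $z$ fixed, perform a tubular-neighborhood analysis along the reduced boundary $\partial^*\{\chi = 1\}$: the support of the integrand concentrates onto a slab of thickness $\sqrt h\,(\langle \mathbf n, z\rangle)_+$ around the boundary, producing in the limit the density $(\langle \mathbf n(x), z\rangle)_+ |\nabla \chi|(dx)$ and cancelling the remaining $1/\sqrt h$. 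Because the multiphase energy decouples as $E_h = \sum_i I_h^i$ with $I_h^i = \frac{1}{\sqrt h}\int\int p(h) u_i(x)(1-u_i(y))\, d\mu\, d\mu$, the lower bound for each phase is treated independently by the same single-phase analysis, which is the simplification over \cite{MR4358242}.

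\emph{Step 3 (Matching total mass and conclusion).} Testing the lower bound from Step 2 with $\phi \equiv 1$ (admissible thanks to the polynomial moment bound from Step 1) and summing over phases, combined with the reverse inequality provided by hypothesis \eqref{ass:energy_conv}, pins down the total mass and forces each phase to saturate. Together with the lower semicontinuity against arbitrary non-negative $\phi$ this yields $\tilde{\mathfrak m} = \mathfrak m$, and uniqueness of the limit upgrades the subsequential statement to convergence along the full sequence \eqref{eq:conv_measure}. \emph{The main obstacle} is Step 2(iii): the tubular-neighborhood blow-up on a curved weighted manifold tested against polynomially growing functions. The geometric corrections (the Jacobian of $\exp$ and the geodesic-vs-Euclidean discrepancy inside $G_h$) must be controlled uniformly in $z$, and the reduced-boundary information of $\chi$ must survive the nonlocal perturbation $y = \exp_x(\sqrt h z)$; this is where Chapter \ref{cha:estimates_kernel} and the BV structure of $\chi$ interact most intricately.
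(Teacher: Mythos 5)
Your overall architecture is right and matches the paper: (i) tightness with polynomial moments in $z$ via the Gaussian bound, (ii) a local lower bound tested against non-negative $\phi$, (iii) upgrade to convergence by pinning down the total mass through the energy assumption \eqref{ass:energy_conv} and the decomposition $f = 1 - (1-f)$. You also correctly identify phase-by-phase decoupling as the reason the multiphase case can be handled without the localization machinery of \cite{MR4358242}. However, your Step 2(iii) is where the proof lives, and the route you propose is not the one the paper takes, and it in fact does not go through under the stated hypotheses.

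The paper's central trick in the lower bound is purely algebraic: use $u(1-u') \geq u - u'$ for $u,u'\in[0,1]$ to linearize $u_{h,i}(x)(1-u_{h,i})(y)$ into $u_{h,i}(x)-u_{h,i}(y)$, then exploit the $(x,y)$-symmetry of $G_h v_0$ to move the difference quotient \emph{off of $u_h$ and onto the smooth test function}. After rescaling, the resulting discrete quotient $D_h(\xi\eta)(x,z)$ converges uniformly to the directional derivative $-D(\Theta v_0 \rho^2)(x)[z]$, and one concludes by integrating this derivative against $u_h \rightharpoonup \chi$ (only weak $L^1$ convergence is needed) and then undoing the derivative via the divergence identity $\int_{T_x M} D\zeta[z] G_1(z)\,dz = \div(\int \zeta\, z\, G_1\, dz)$ and the BV definition of $\nabla\chi_i$. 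At no point does one need the reduced boundary, rectifiability, or a blow-up along $\partial^*\{\chi=1\}$.

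Your proposed tubular-neighborhood blow-up would require concentrating $u_h(x)(1-u_h(\exp_x(\sqrt h z)))$ on a slab of thickness $\sqrt h(\langle\mathbf n,z\rangle)_+$ around the reduced boundary. That argument needs (a) De Giorgi's structure theorem and the rectifiable blow-up, exactly the geometric measure theory machinery the paper is built to avoid, and, more seriously, (b) strong $L^1$-localization of $u_h$ near $\partial^*\{\chi=1\}$ — yet the lemma only assumes $u_h \rightharpoonup \chi$ \emph{weakly} in $L^1$, which gives no pointwise or slab-wise control on $u_h$. Weak convergence is compatible with $u_h$ spreading out, and the slab argument would fail. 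The paper's symmetrization trick is what lets weak convergence suffice, because the only thing that must converge strongly is the smooth discrete difference quotient of the test function. So the missing idea is precisely the $u(1-u')\geq u-u'$ plus $x\leftrightarrow y$ swap; without it, Step 2(iii) as you describe it does not close.
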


\begin{rem}
  Unfolding the definitions of $\mathfrak{m}_h$ and $\mathfrak{m}$ yields that the convergence \eqref{eq:conv_measure} is equivalent to 
  \begin{align*}
    &\lim_{h \downarrow 0} \int_0^T \frac{1}{\sqrt{h}}\int_M \int_{B_{\inj}(x)} f \Big(x,y, \frac{\exp^{-1}_x(y)}{\sqrt{h}},  \frac{\exp^{-1}_y(x)}{\sqrt{h}},t\Big) p(h,x,y)u_{h,i}(x,t) 
    (1-u_{h,i})(y,t) \\
    &\hspace{370pt} \times \dmy \dmx \,dt\nonumber\\
    &= \int_0^T \int_M \int_{T_x M}f(x,x,z,-z,t)  G_1(z) \left( \langle \mathbf{n}_i(x), z \rangle_x \right)_+
    \,dz\, \rho(x) |\nabla \chi_i|(x) \,dt
  \end{align*}
  for any test function $f \in C^0(TM^2 \times [0,T])$.
\end{rem}

With the convergence result of Lemma \ref{lem:conv_densities} one is able to pass to the limit in the energy dissipation inequality. The limiting inequality is then, at least morally, the De Giorgi's inequality~\eqref{eq:deGiorgi}. For that one needs Proposition \ref{prop:distance} with its sharp control of the velocity $V_i$ by the difference quotient of the metric $\frac{d_h^2(\chi_h(t+h), \chi_h(t))}{h^2}$. Similarly, Proposition \ref{prop:metricslope} is necessary to relate the metric slope to the gradient of the weighted perimeter, which is the limiting energy of $E_h$. For the compactness of the constant interpolation of the Lagrange multiplier of Lemma \ref{lem:mmi} the uniform bound 
\begin{equation}\label{eq:l2_lagrange_multiplier}
  h \sum_{\ell = 1}^{\lfloor \frac{T}{h} \rfloor} |\lambda^\ell|^2 \leq C
\end{equation}
is necessary. This was already established in \cite{kramer2024efficient} by bounding the first variation of the energy and distance in the Euler-Lagrange equation of the minimizing movement interpretation of Lemma~\ref{lem:mmi}.  

\begin{proposition}[Lower Semicontinuity of Distance Term]\label{prop:distance}
  Assume that \eqref{eq:assumption_weakl1} and the conclusion of Lemma \ref{lem:Compactness} and Lemma \ref{lem:conv_densities} holds. If the left-hand side of \eqref{eq:prop_distance} is finite then there exists normal velocities 
  \begin{equation} \label{eq:V_dist_vel}
    V_i \in L^2(|\nabla \chi_i|dt)
  \end{equation}
  in the sense of \eqref{def:velocity} satisfying $\partial_t \chi_i = V_i |\nabla \chi_i|$ distributionally and the relation 
  \begin{equation}\label{eq:prop_distance}
    \liminf_{h \rightarrow 0} \int_0^T \frac{1}{2h^2} d_h^2(\chi_h(t+h), \chi_h(t))\,dt \geq \frac{c_0}{2} \sum_{i = 1}^P\int_0^T \rho V_i^2 |\nabla \chi_i| \,dt.
  \end{equation}
\end{proposition}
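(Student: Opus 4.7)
The plan is to adapt the duality strategy of \cite{MR4385030} to the multiphase, weighted, manifold setting. First I extract the normal velocities $V_i$ from the discrete finite differences via a duality argument, then I derive the sharp lower bound \eqref{eq:prop_distance} through a Legendre--Fenchel duality based on a sharp asymptotic Cauchy--Schwarz inequality.

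For smooth test functions $\xi_i \in C_c^\infty(M \times [0,T))$, a summation by parts in time yields
\begin{equation*}
  -\int_0^T\int_M \partial_t \xi_i \, \chi_{h,i} \, d\Vol \, dt - \int_M \xi_i(0)\, \chi_i^0 \, d\Vol = \frac{1}{h}\int_0^T\int_M \tilde\xi_{i,h}(t,x) \, \delta_i(t,x) \, d\Vol(x)\, dt,
\end{equation*}
where $\delta_i(t) := \chi_{h,i}(t+h) - \chi_{h,i}(t)$ and $\tilde\xi_{i,h}$ is a piecewise-constant-in-time interpolant of $\xi_i$ converging uniformly to $\xi_i$. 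By assumption \eqref{eq:assumption_weakl1}, the left-hand side converges to $-\int \partial_t \xi_i \, \chi_i \, d\Vol \, dt - \int_M \xi_i(0)\, \chi_i^0 \, d\Vol$; if the discrete velocity $h^{-1} \delta_i \, d\Vol \, dt$ converges weakly-$*$ to a signed measure of the form $V_i \, |\nabla \chi_i| \, dt$, the distributional identity \eqref{def:velocity} then follows.

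The core estimate is the asymptotic Cauchy--Schwarz inequality
\begin{equation*}
  \left(\frac{1}{h}\int_0^T \int_M \xi_i \, \delta_i \, d\Vol\, dt\right)^{\!2} \le (1 + o_h(1)) \, \frac{2}{c_0} \, \left(\int_0^T\int_M \xi_i^2/\rho \, |\nabla \chi_i| \, dt\right) \int_0^T D_{h,i}(t)\, dt,
\end{equation*}
where $D_{h,i}(t) := h^{-3/2}\int p(h,x,y)\, \delta_i(t,x)\, \delta_i(t,y) \, d\mu(x)d\mu(y)$, so that $\sum_i \int_0^T D_{h,i}\, dt = \int_0^T d_h^2/(2h^2)\, dt$. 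I will derive it by inserting $1 = \int p(h,x,y)\, d\mu(y)$, symmetrizing in $(x,y)$ using the symmetry of $p$, and applying Cauchy--Schwarz to the positive-semidefinite bilinear form $B_h(f,g) := \int p(h,x,y)\, f(x)\, g(y)\, d\mu(x)\, d\mu(y)$. The crucial step is to localize the test-function side to the interface: since $\delta_i$ is concentrated in an $O(h)$-neighborhood of $\partial\Omega_i(t)$ while $p(h,\cdot,\cdot)$ averages on scale $\sqrt h$, the decomposition $\delta_i = \sum_{j \neq i}[\chi_i(t+h)\chi_j(t) - \chi_j(t+h)\chi_i(t)]$ reduces the distance quantity $\int p\, \delta_i(x)\delta_i(y)\, d\mu^2$ to cross-time products $\int p\, \chi_i(t+h)\chi_j(t)\, d\mu^2$, which are asymptotically close to the single-time energy densities governed by Lemma~\ref{lem:conv_densities}. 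Passing to the blow-up variables $(x, z = \exp_x^{-1}(y)/\sqrt h)$ and applying $\mathfrak{m}_h \rightharpoonup \mathfrak{m}$ produces the tangential Gaussian integrals delivering the sharp constant $c_0 = 1/\sqrt{\pi}$.

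With the asymptotic Cauchy--Schwarz in hand, Young's inequality (with the optimal parameter $\sqrt{2c_0}$), combined with summation over $i$ and passage to the limit, gives
\begin{equation*}
  \sum_i \int_0^T \int \xi_i \, V_i \, |\nabla \chi_i| \, dt \le \frac{1}{2c_0} \sum_i \int_0^T \int \xi_i^2/\rho \, |\nabla \chi_i| \, dt + \liminf_{h \downarrow 0} \int_0^T \frac{d_h^2}{2h^2}\, dt.
\end{equation*}
Taking the supremum over smooth $\xi$ (with maximizer $\xi_i = c_0\, \rho\, V_i$) yields \eqref{eq:prop_distance}, and the existence of $V_i \in L^2(\rho\, |\nabla \chi_i|\, dt)$ follows from this bound via the Riesz representation theorem. \emph{The main obstacle} is rigorously establishing the sharp asymptotic Cauchy--Schwarz: a naive use of Cauchy--Schwarz on $B_h$ yields only an $L^2(\mu)$-type norm of the test function, too coarse to recover the interface estimate. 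Overcoming this requires carefully marrying the decomposition of $\delta_i$ with the blow-up convergence of Lemma~\ref{lem:conv_densities} to extract the precise Gaussian constant $c_0$.
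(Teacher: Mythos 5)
Your proposed route --- identify $V_i$ by duality, then bound the pairing by an ``asymptotic Cauchy--Schwarz'' against the dissipation --- is the Luckhaus--Sturzenhecker strategy for the Almgren--Taylor--Wang scheme. It does not transfer to thresholding in the form you sketch, and the gap is precisely at the place you flag as ``the main obstacle.''

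The Cauchy--Schwarz you need has the structure
\[
\Bigl(\tfrac{1}{h}\textstyle\int \xi_i\,\delta_i\Bigr)^{2} \le \Bigl(\textstyle\int \xi_i^{2}\,W_h\Bigr)\cdot D_{h,i},
\]
with $W_h$ a weight such that $\int \xi_i^{2}W_h \to \tfrac{2}{c_0}\int \xi_i^{2}/\rho\,|\nabla\chi_i|$. For the ATW scheme the dissipation is a local integral weighted by $\mathrm{dist}(x,\partial\Omega)$, which supplies exactly this weight for free. For thresholding the dissipation is the nonlocal form $\frac{1}{h\sqrt h}\int p(h)\,\delta(x)\delta(y)\,d\mu^2 = \frac{1}{h\sqrt h}\|p(h/2)*\delta\|_{L^2(\mu)}^2$, and there is no analogous pointwise weight: to bound $\|p(h/2)*\delta\|^2/W_h$ by $D_{h,i}$ one needs $W_h\ge 1$, and then $\int\xi_i^2 W_h = O(1)$, not the $O(\sqrt h)$ required for an interface integral. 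Your three fixes do not close this. Inserting $1 = \int p(h,\cdot,y)\,d\mu(y)$ and symmetrizing reproduces the unweighted Cauchy--Schwarz on $B_h$ that you yourself observe is too coarse. The decomposition $\delta_i = \sum_{j\neq i}[\chi_i(t+h)\chi_j(t)-\chi_j(t+h)\chi_i(t)]$ turns $\int p\,\delta_i(x)\delta_i(y)$ into cross-time quantities of the form $\int p\,\chi_i(t+h)(x)\chi_j(t)(y)$, but these are \emph{not} reduced to the single-time densities of Lemma \ref{lem:conv_densities}: that lemma resolves the leading $O(\sqrt h)$ behavior, which is the same for $\chi_j(t)$ and $\chi_j(t+h)$ because the interface moves by $O(h)\ll\sqrt h$ in one step. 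The velocity lives in the next-order $O(h)$ correction, invisible to the lemma, so this substitution discards exactly the information you are trying to extract. Finally, the identification of the limit of $h^{-1}\delta_i\,d\Vol\,dt$ as a measure absolutely continuous with respect to $|\nabla\chi_i|\,dt$ is itself a nontrivial claim that you are deferring to the same unproved estimate.

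The paper's actual argument avoids Cauchy--Schwarz altogether. It introduces an intermediate timescale $\tau = \alpha\sqrt h$ --- crucial because the one-step difference $\chi_h(t+h)-\chi_h(t)$ lives on a timescale too short to resolve the velocity against the spatial scale $\sqrt h$ --- and decomposes $\frac{1}{\sqrt h}|\delta\chi|$ (with $\delta\chi:=\chi_h-\chi_h(\cdot-\tau)$) into $\delta\chi\,p(h)*\delta\chi$, the mixed positive/negative products, and the single-sign terms $\delta\chi_\pm\,p(h)*(1-\delta\chi_\pm)$. The first is shown to be $\le \alpha^2\nu$ (the dissipation measure), the mixed term vanishes, and the single-sign terms are bounded by blow-up via Lemma \ref{lem:conv_densities} using a splitting into ``time-like'' and ``space-like'' regions relative to a test normal $\bar{\mathbf n}$ and test speed $\bar V$. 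Sending $\bar V\uparrow\infty$ gives absolute continuity of $\partial_t\chi$; optimizing in $\bar{\mathbf n}=\mathbf n$, rescaling $\alpha'=\alpha\bar V$, and sending $\alpha'\downarrow 0$ pins down the sharp constant $c_0/2$. This is the $\alpha$-trick of \cite{MR4385030}, structurally quite different from duality. If you want to pursue the duality approach you would need to devise, as a new ingredient, a weight $W_h$ adapted to the nonlocal dissipation --- this does not appear to be a routine adaptation.
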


\begin{proposition}[Lower Semicontinuity of Metric Slope Term]\label{prop:metricslope}
  Given the conclusion of Lemma~\ref{lem:conv_densities} and $u_h \rightarrow \chi$ strongly in $L^2(M \times (0,T), \R^d)$, then for every $i = 1, \dots, P$ there exists a distributional 
  mean curvature vector $H_i \in L^2(|\nabla \chi_i|dt)$ defined by \eqref{def:mean_curvature}  satisfying the relation
  \begin{align}\label{eq:prop_metric}
    \liminf_{h \downarrow 0} \frac{1}{2}\int_{0}^{T} \Big|\partial E_h^{\lceil\frac{t}{h}\rceil}\Big|^2(u_h(t)) \,dt \geq \frac{c_0}{2} \sum_{i = 1}^P \int_M \int_0^T  \left( H_i + \langle \mathbf{n}_i,  \nabla \log \rho\rangle - \Lambda_i\right)^2 \rho \,|\nabla \chi_i|\,dt. 
  \end{align}
  where $\Lambda_i$ as in \eqref{def:lagrange_multi}.
\end{proposition}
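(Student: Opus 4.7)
The plan is to test the metric slope by \emph{inner variations} generated by smooth vector fields, producing a Young-type inequality that, after passing to the limit $h\downarrow 0$ with the help of Lemma \ref{lem:conv_densities}, recovers the squared $L^2(\rho|\nabla\chi_i|dt)$-norm of the first variation of the weighted perimeter including the Lagrange multiplier. Fix a smooth tangential vector field $\xi\in C_c^\infty(M\times[0,T),TM)$ and let $\Phi^\epsilon$ denote the time-$\epsilon$ flow of $\xi(t,\cdot)$ on $M$. The competitor $u_h^\epsilon(t,\cdot):=u_h(t,\Phi^\epsilon(\cdot))$ still lies in $\mathcal{M}$ and satisfies $d_h(u_h,u_h^\epsilon)\to 0$ as $\epsilon\to 0$, so the very definition of the metric slope combined with Young's inequality yields for a.e.\ $t$
\begin{equation*}
\tfrac12\bigl|\partial E_h^{\lceil t/h\rceil}\bigr|^2(u_h(t))\ \ge\ L_h^{\lceil t/h\rceil}[\xi(t)]\ -\ \tfrac12\, Q_h^{\lceil t/h\rceil}[\xi(t)],
\end{equation*}
where $L_h^\ell[\xi]:=\frac{d}{d\epsilon}\big|_{\epsilon=0}(E_h^\ell(u_h)-E_h^\ell(u_h^\epsilon))$ is the first variation along $\xi$ and $Q_h^\ell[\xi]:=\lim_{\epsilon\downarrow 0}\epsilon^{-2}d_h^2(u_h,u_h^\epsilon)$ is the quadratic form coming from the metric.

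To compute $L_h$ and $Q_h$, I change variables $y\mapsto\Phi^{-\epsilon}(y)$ (and symmetrically in $x$) to move the $\epsilon$-dependence onto the heat kernel $p(h,\cdot,\cdot)$ and the Jacobian of the weighted volume $\rho\,d\Vol$. Differentiating at $\epsilon=0$ produces $\langle\nabla_x p,\xi(x)\rangle$ and $\langle\nabla_y p,\xi(y)\rangle$ terms together with a factor $\div_\mu\xi=\div\xi+\langle\xi,\nabla\log\rho\rangle$, while the Lagrange multiplier piece contributes $\Lambda^\ell\!\cdot\!\int u_h\,\div_\mu\xi\,d\mu$. A second-order expansion, with the error terms absorbed by Corollary \ref{cor:mixed_gaussian_bound} and Lemma \ref{lem:hessian_dist_equal_identity}, rewrites both $L_h^\ell[\xi]$ and $Q_h^\ell[\xi]$ as integrals against the energy-density measure $\mathfrak{m}_h$ of \eqref{def:measure_energy} of continuous test functions on $TM^2$ that depend, respectively, linearly and quadratically on $\xi$.

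Integrating in $t$ and applying Lemma \ref{lem:conv_densities} (the integrands grow only polynomially in the tangential variable, which is permitted) identifies the limits
\begin{align*}
\liminf_{h\downarrow 0}\int_0^T L_h^{\lceil t/h\rceil}[\xi]\,dt &= c_0\sum_{i=1}^P\int_0^T\!\!\int_M\bigl(\div\,\xi-\langle\mathbf{n}_i,\nabla_{\mathbf{n}_i}\xi\rangle+\langle\xi,\nabla\log\rho\rangle-\Lambda_i\langle\xi,\mathbf{n}_i\rangle\bigr)\rho\,|\nabla\chi_i|\,dt,\\
\limsup_{h\downarrow 0}\int_0^T Q_h^{\lceil t/h\rceil}[\xi]\,dt &= c_0\sum_{i=1}^P\int_0^T\!\!\int_M\langle\xi,\mathbf{n}_i\rangle^2\rho\,|\nabla\chi_i|\,dt.
\end{align*}
The $\Lambda^\ell$-term is handled by extracting a weak-$L^2$ limit $\Lambda_i$ of the piecewise constant interpolation of $\Lambda^\ell$ using the uniform bound \eqref{eq:l2_lagrange_multiplier}, combined with the strong $L^2$-convergence $u_h\to\chi$; testing the defining volume-preservation relation of $\Lambda^\ell$ with smooth approximants of $\mathbf{n}_i$ identifies this limit with the quantity in \eqref{def:lagrange_multi}.

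The first limit above defines a linear functional of the normal trace $\langle\xi,\mathbf{n}_i\rangle$ whose norm is bounded in $L^2(\rho|\nabla\chi_i|dt)$ by a Cauchy--Schwarz against the quadratic limit together with the finiteness of $\liminf_h\int|\partial E_h^{\lceil t/h\rceil}|^2$. Riesz representation then produces $H_i\in L^2(|\nabla\chi_i|dt)$ satisfying \eqref{def:mean_curvature}, and the tangential divergence theorem rewrites the functional as $\xi\mapsto\sum_i\int(H_i+\langle\mathbf{n}_i,\nabla\log\rho\rangle-\Lambda_i)\langle\xi,\mathbf{n}_i\rangle\rho|\nabla\chi_i|dt$. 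Approximating the Riesz representative $\xi^\star=-(H_i+\langle\mathbf{n}_i,\nabla\log\rho\rangle-\Lambda_i)\mathbf{n}_i$ of this functional by smooth global vector fields and inserting into the integrated Young inequality produces \eqref{eq:prop_metric}. The main obstacle is the second paragraph: the mixed expansion in $\epsilon$ (for each fixed $h$) and $h$ has to be performed in the right order, and all remainder terms must be absorbed using the heat-kernel derivative bounds of Chapter \ref{cha:estimates_kernel}; the identification of the weak-$L^2$ limit of $\Lambda^\ell$ with \eqref{def:lagrange_multi} is the other subtle point.
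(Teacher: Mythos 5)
Your overall strategy is the same as the paper's: bound the metric slope from below via a Young-type inequality $\tfrac12|\partial E_h^\ell|^2(u_h)\ge \delta E_h^\ell(u_h,\xi)-\tfrac12 \lim_{s\to 0}s^{-2}d_h^2(u_h,u_{h,s})$ along inner variations generated by a smooth vector field $\xi$, pass to the limit $h\downarrow 0$ in both the linear and quadratic parts using Lemma~\ref{lem:conv_densities} together with the heat-kernel derivative bounds and Lemma~\ref{lem:hessian_dist_equal_identity}, obtain $H_i$ by Riesz representation in $L^2(\rho|\nabla\chi_i|\,dt)$, and close by optimizing over $\xi$. The limiting expressions you claim for the linear and quadratic parts match the paper's Steps~2 and 3.

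The one place where your route deviates, and where I see a genuine gap, is the treatment of the Lagrange multiplier. Extracting a subsequential weak-$L^2$ limit of the interpolation of $\Lambda^\ell$ via \eqref{eq:l2_lagrange_multiplier} and pairing with the strong limit of $u_h$ is fine, but you then assert that ``testing the defining volume-preservation relation with smooth approximants of $\mathbf{n}_i$ identifies this limit with the quantity in \eqref{def:lagrange_multi}.'' This identification is not obtained by such a testing argument: it is a nontrivial consistency statement for the discrete multiplier that would normally be a consequence of the convergence result you are trying to prove, not a prerequisite, and it would moreover leave you with possible subsequence dependence. The paper sidesteps the issue entirely: it keeps the (unidentified) subsequential weak limit $\Lambda$ in the lower bound $\frac{c_0}{2}\sum_i\int(H_i-\Lambda_i+\langle\mathbf{n}_i,\nabla\log\rho\rangle)^2\rho\,|\nabla\chi_i|\,dt$, and then simply observes that the constant in \eqref{def:lagrange_multi} is the \emph{minimizer} of this quadratic over all constants $\Lambda_i$, so the bound holds a fortiori with \eqref{def:lagrange_multi} in place of the weak limit and becomes independent of the subsequence. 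Replacing your identification step with this one-line minimization argument repairs the proposal; the remainder of your plan is sound, though of course the second-order expansion in Step~2/Step~3 still needs the explicit Taylor expansion of vector fields along geodesics and the product-rule bookkeeping that the paper carries out.
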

Given the results of Lemmas~\ref{lem:mmi}, \ref{lem:dedi}, \ref{lem:Compactness} and \ref{lem:conv_densities} as well as Propositions~\ref{prop:distance} and \ref{prop:metricslope} the proof of Theorem \ref{the:main} is just about piecing everything together.
\begin{proof}[Proof of Theorem \ref{the:main}]
   Lemma \ref{lem:mmi} allows applying Lemma \ref{lem:dedi} with the energy $E_h^\ell(u) = E_h(u) - \Lambda^\ell \cdot \int_M u \,d\mu$. Note that $E_h(\chi^\ell) - E_h(\chi^{\ell-1}) = E_h^\ell(\chi^\ell) - E_h^\ell(\chi^{\ell-1})$ as $\int_M \chi^\ell \,d\mu = \int_M \chi^{\ell-1} \,d\mu$ by definition of the volume constrained MBO scheme.
   Hence, the energy dissipation inequality~\eqref{eq:dedi} holds with $(E,d,\chi,u)$ replaced by $(E_h, d_h, \chi_h, u_h)$. The energy $E_h$ is consistent by the $\Gamma$-convergence result proven in \cite[Theorem 3]{laux2021large}, i.e.,
   \begin{equation}\label{eq:consistency}
    \lim_{h \downarrow 0} E_h(\chi^0) = E(\chi^0) = c_0 \sum_{i = 1}^P \rho |\nabla \chi_i^0|.
   \end{equation}
   Therefore, the assumption of Lemma \ref{lem:Compactness} is satisfied such that, up to a subsequence, $\chi_h \rightarrow \chi$ in $L^1(M \times (0,T), \R^d), \chi \in \{0,1\}^P, \sum_{i = 1}^P \chi_i = 1$ $\mu$-a.e. This implies $u_h \rightarrow \chi$ in $L^2(M \times (0,T), \R^d)$ as $\lim_{h \downarrow 0}\|u_h - \chi_h\|_{L^2(M \times (0,T))} = 0$ by \eqref{eq:bound_dist}, \eqref{eq:l2_u} and $\lim_{h \downarrow 0}\|\chi - \chi_h\|_{L^2(M \times (0,T))} \leq \lim_{h \downarrow 0}\|\chi - \chi_h\|_{L^1(M \times (0,T))} =  0$.

   Due to \eqref{ass:energy_conv} and \eqref{eq:vi_prop} the prerequisites for Lemma \ref{lem:conv_densities} are met for both $u_h$ and $\chi_h$ which allows applying Proposition \ref{prop:distance} to $\chi_h$ and Proposition \ref{prop:metricslope} to $u_h$. From here on the proof is following the same lines as in \cite{MR4385030}. The argument is sketched for the sake of completeness. 

   Define 
   \begin{align*}
       \rho(t) = \sum_{\ell = 1}^{\lfloor\frac{t}{h} \rfloor}  &\frac{1}{2}\int_{(\ell-1)h}^{nh}\left(\frac{1}{h^2}d^2(\chi_h(s), \chi_h(s-h) ) +
    |\partial E^\ell(u(s))|^2 \right) ds \\
    +  &\frac{1}{2}\int_{\lfloor\frac{t}{h} \rfloor h}^{t} \left(\frac{1}{h^2}d^2(\chi_h(s), \chi_h(s-h) ) +
    |\partial E^\ell(u(s))|^2 \right) ds
   \end{align*}
   such that $\rho(t) = \rho(\ell h) + \delta(t)$ for $t \in [\ell h, (\ell+1)h)$, with 
   \begin{equation*}
       \delta(t) :=\frac{1}{2}\int_{\ell h}^{t} \left(\frac{1}{h^2}d^2(\chi_h(s+h), \chi_h(s) ) +
    |\partial E^\ell(u(s))|^2 \right) ds.
   \end{equation*}
 Due to \eqref{eq:dedi} it holds $\int_0^T \delta(t) \, dt \leq h E_h(\chi_h^0)$. Hence, multiplying \eqref{eq:dedi} in form of $\rho(\ell h) \leq E_h(\chi_h^0)$ with $\eta(\ell h) - \eta((\ell -1)h)$ for a non-increasing $\eta \in C_0^\infty([0,T])$ yields $\int_0^\infty (- \frac{d \eta}{dt}) \rho \, dt \leq (\eta(0) + h \sup |\frac{d\eta}{dt}|) E_h(\chi^0)$. Use $\eta(t) = \max\{\min\{\frac{T-t}{\tau},1 \},0\}$ and integrate by parts once to get
    \begin{align*}
      \frac{1}{\tau} \int_{T-\tau}^T E_h(\chi_h(t))\, dt &+  \int_0^{T-\tau}
       \frac{1}{h^2}d^2(\chi_h(t), \chi_h(t-h) )\,dt \\
       &+\sum_{n = 1}^{\lfloor\frac{T- \tau}{h} \rfloor}\frac{1}{2}\int_{(n-1)h}^{nh} |\partial E^n_h(u(t))|^2  \,dt
    +  \frac{1}{2}\int_{\lfloor\frac{T-\tau}{h} \rfloor h}^{T-\tau} 
    |\partial E^n_h(u(t))|^2  \,dt\\
    \leq (1 + \frac{h}{\tau}) E_h(\chi^0).
    \end{align*}
    To pass to the limit $h \downarrow 0$ in the first left-hand side term, one takes \eqref{eq:lowersemicontinuity_energy} with $(0,T)$ replaced by $(T-\tau, T)$. The second left-hand side term is treated by \eqref{eq:prop_distance} (note that we may extend the integral down to 0 because of the second item in \eqref{eq:piecewise_interpolation}) with $(0,T)$ replaced by $(0, T - \tau)$. For the remainder of the left-hand side inequality \eqref{eq:prop_metric} is used with $(0,T)$ replaced $(T- \tau, T)$. For the right-hand side term, one applies \eqref{eq:consistency}. In conclusion one obtains with $ \Lambda_i = \frac{\int_M ( H + \langle \mathbf{n}_i, \nabla \log \rho \rangle ) \rho |\nabla \chi_i|}{\int_M \rho |\nabla \chi_i|}$ that 
    \begin{align*}
     &\frac{c_0}{\tau}\sum_i \int_{(T-\tau,T)} \rho \,|\nabla \chi_i|(t) \,dt
      \nonumber\\+& \frac{c_0}{2}\sum_i  \int_{M \times (0,T)} \rho\left(V_i^2 + \left(H - \Lambda_i + 
      \langle \mathbf{n}_i, \nabla \log \rho \rangle\right)^2 \right) \,|\nabla \chi_i| \, dt
      \leq \frac{c_0}{2}\sum_i \int_M \rho \, |\nabla \chi_i|.
   \end{align*}
   Dividing by $c_0$ and taking the limit $\tau \downarrow 0$ implies \eqref{eq:deGiorgi}.

  \end{proof}

\section{Notation and Prerequisites}\label{sec:notation}
The heat kernel on weighted Riemannian manifolds is first introduced. A brought introduction to the most important properties can be found in \cite{MR2218016}.
Let $(M, \mu, g)$ be a closed, smooth, $d$-dimensional, weighted Riemannian manifold. That is $M$ is compact and without boundary, the metric $g:TM \times TM \rightarrow \R$ is smooth, and $\mu = \rho \Vol$ is a measure on $M$ with smooth, positive density $\rho > 0$. The tangent bundle will be denoted by $TM$ which roughly is the union of all tangent spaces $T_x M$ for $x\in M$. For tangent vectors $v,w \in T_x M$ it will be used $\langle v,w \rangle_x := g_x(v,w)$ and $|v|_x^2 = g_x(v,v)$. The class of smooth functions $f:M \rightarrow \R$ is denoted by $f \in C^\infty(M)$ while the class of smooth vector fields $\xi: TM \rightarrow \R$ is denoted by $\xi \in \Gamma(TM)$. For $f \in C^\infty (M)$ the gradient $\nabla f(x) \in T_x M$ is defined by satisfying 
\begin{equation*}
  \langle \nabla f(x), v \rangle_x = D_x f(v) \quad \text{ for all } v \in T_x M
\end{equation*}
where $D_x f$ is the differential of $f$ at $x \in M$. The weighted divergence $\divrho: \Gamma(TM) \rightarrow C^\infty(M)$ is defined by the duality 
\begin{equation*}
  \int_M \divrho(\xi) f \, d\mu = - \int \langle \xi, \nabla f \rangle \, d\mu \quad \text{ for all } f \in C^\infty(M).
\end{equation*}
One can then check that $\divrho(\xi) = \frac{1}{\rho} \div(\rho \xi)$ where $\div$ denotes the divergence defined analogous to $\divrho$ but with respect to the volume measure $\Vol$. Consequently, the weighted Laplace--Beltrami operator is defined by $\Delta_\mu := - \divrho \nabla$ (the sign-convention is used to ensure the eigenvalues are positive). The solution $e^{-h \Delta_\mu} \chi$ of the heat equation 
\begin{equation*}
  \begin{cases}
    \frac{d}{dh} u = - \Delta_M u & \text{on } M \times (0, T]\\
    u(x,0) = \chi(x) &\text{in } M
  \end{cases}
\end{equation*}
can be expressed by convoluting with the heat kernel $p$, see \cite[Theorem 3.3]{MR2218016} and references therein. This means
\begin{equation}\label{def:heat_kernel}
  e^{-h \Delta_\mu} \chi = p(h) * \chi := \int_M p(h,x,y) \chi(y) \dmy
\end{equation}
and furthermore it holds the symmetry $p(h,x,y) = p(h,y,x)$, normalization $\int_M p(h,x,y) \dmy = 1$ and the semigroup property 
\begin{equation}\label{eq:semigroup_prop}
  p(t+s,x,y) = \int_M p(t,x,q) p(s,q,y) \, d\mu(q) \quad \text{ for any } t,s \in \R_+.
\end{equation}
Throughout the paper approximations for the heat kernel will be used that are close to the Euclidean heat kernel. Therefore, the following abbreviations are introduced
\begin{align*}
  G_h(x,y) &:= \frac{1}{(4\pi h)^{d/2}} e^{- \frac{\dist^2(x,y)}{4h}},\\
  G_h(z) &:= \frac{1}{(4\pi h)^{d/2}} e^{- \frac{|z|^2}{4h}}.
\end{align*}
In the following, for $x,y \in M, z \in T_x M$, it will be used
  $x_z := \exp_x(z)$ as shorthand notation for the exponential map $\exp_x:T_x M \rightarrow M$. The injective radius $\inj$, describing the radius in which the exponential map is diffeomorphic, is positive as $M$ is compact and smooth. Hence, the inverse $\exp_x^{-1}(y)$ is well-defined if $\dist(x,y) < \inj$. Furthermore, denote $\gamma_{x,y}$ for the unit speed geodesic connecting $x$ with $y$ (if it exists) and 
  $\gamma_{x,z}$ for the geodesic starting at $x$ with initial velocity $z$. Parallel transport from $x$ to $y$ along the geodesic $\gamma_{x,y}$ is denoted by $PT_{x \rightarrow y}$ where parallel transport will only be applied if $\dist(x,y) \leq \inj$ such that there exists a unique geodesic. The parallel transport is always with respect to the Levi-Civita connection which is in this paper depicted by $\nabla$ (it can be distinguished from the gradient as it is applied to vector fields and not to functions).

  A function $\chi:M \rightarrow \{0,1\}$ is of bounded variation, or short $\chi \in  BV(M, \{0,1\})$, if $\chi \in L^1(M)$ and
  \begin{equation*}
    \sup \left\{ \int_M \div(\xi) \chi \, d\Vol : \xi \in \Gamma(TM), \sup_{M} |\xi| \leq 1 \right\} < \infty .
  \end{equation*}
  As usual, the Riesz-representation theorem yields a Gauss--Green measure $\nabla \chi$ such that
  \begin{equation*}
    \int_M \div(\xi) \chi \, d\Vol = -  \int_M \langle \xi, \nabla \chi \rangle \quad \text{for all } \xi \in \Gamma(TM). 
  \end{equation*}
  Denoting by $\mathbf{n}$ the measure theoretic normal satisfying $\nabla \chi = \mathbf{n} |\nabla \chi|$, where $|\cdot|$ stands for the total variation, then the previous identity turns into
  \begin{equation}
        \int_M \div(\xi) \chi \, d\Vol = -  \int_M \langle \xi, \mathbf{n}\rangle  |\nabla \chi | \quad \text{for all } \xi \in \Gamma(TM). 
  \end{equation} 
  In the following the normal $\mathbf{n}_i$ will always be the one associated to the function $\chi_i$.
\section{Estimates Involving Heat Kernels}\label{cha:estimates_kernel}
We start with classic Gaussian bounds and the asymptotic expansion of the heat kernel:
\begin{rem}\label{rem:tools}
\begin{enumerate}[i)]
  \item\textbf{Scaling of volume and area.} Denote by $B_r(x)$ the ball around $x$ with radius $r$ with respect to the Riemannian distance. There exists $C > 0$ such that for any $x \in M$ and any $r > 0$ it holds 
  \begin{align}\label{eq:doubling_prop}
   \frac{1}{C} r^{d} \leq \mu(B_r(x)) \leq C r^d.
  \end{align}
  \item \label{item:ass_expansion}\textbf{Asymptotic expansion of the heat kernel.} There are smooth coefficient functions $v_i \in C^\infty(M \times M)$ such that for any $j,k \in \N$ and $K > j + k+ \frac{d}{2}$ there exists a constant $C_K$ such that for $x,y \in M$ with $\dist_M(x,y) \leq \frac{\inj}{2}$ it holds 
  \begin{align}\label{eq:asymptotic_exp}
  \left|\nabla^j_x \nabla^k_y \left(p(h,x,y) - \frac{e^{\frac{-\dist_M^2(x,y)}{4h}}}{(4\pi h)^{d/2}} \sum_{i = 0}^K v_i(x,y) h^i \right) \right| \leq C_K h^{K + 1 - \frac{d}{2}-j -k}.
  \end{align}
  Furthermore, $v_0(x,x) = \frac{1}{\rho(x)}$ and $v_0(x,y) = v_0(y,x)$ for all $x,y \in M$.
  \item \textbf{Gaussian bounds for heat kernel.} There are constants $C_1,C_2,C_3,C_4 > 0$ such that for every $h> 0$ and $x,y \in M$ it holds
  \begin{align}\label{eq:gaussian_one}
  \frac{C_{1}}{\mu(B_{\sqrt{h}}(x))}e^{-\frac{\dist_M^2(x,y)}{C_{2} h}} \leq p(h,x,y) \leq \frac{C_{3}}{\mu(B_{\sqrt{h}}(x))}e^{-\frac{\dist_M^2(x,y)}{C_{4} h}}.
  \end{align}
  \item \textbf{Upper Gaussian bound for gradient of heat kernel.} There are constants $\tilde{C}_1, \tilde{C}_2 > 0$ such that for every $h> 0$ and $x,y \in M$ it holds
  \begin{align}\label{eq:gaussian_two}
  |\nabla_x p(h,x,y)| \leq \frac{\tilde{C}_1}{\sqrt{h} \mu(B_{\sqrt{h}}(x))}e^{-\frac{\dist_M^2(x,y)}{\tilde{C}_2 h}}.
  \end{align}
  \end{enumerate}
  The scaling of volume is induced by the Bishop--Gromorov inequality \cite{MR169148} (see also \cite[Lemma 36]{MR2243772}) and the explicit representation of the volume element on the hyperbolic space. The necessary bound on the Ricci curvature follows from our regularity assumption and the compactness of the manifold.
  The asymptotic expansion can be proven by the parametrix method as done in \cite{MR1462892}.
  The Gaussian bounds \eqref{eq:gaussian_one} and \eqref{eq:gaussian_two} are a consequence of the Li--Yau inequality \cite{MR1186481}.
  \end{rem}

  To the knowledge of the author there is no literature regarding Gaussian bounds for mixed derivatives, but only for example for higher derivatives in a single variable \cite{cao2021hessian}. This is probably due the fact that the bounds on mixed derivatives can be easily deduced from the semigroup property \eqref{eq:semigroup_prop} together with the Gaussian bound for the single variable derivatives, as the following result shows.
\begin{corollary}\label{cor:mixed_gaussian_bound}
  There are constants $\hat{C_1}, \hat{C_2}> 0$ such that for every $h> 0$ and $x,y \in M$ is holds
  \begin{equation}\label{eq:gaussian_three}
    |\nabla_x \nabla_y p(h,x,y)| \leq \frac{\hat{C_1}}{h\mu(B_{\sqrt{h}}(x))}e^{-\frac{\dist^2(x,y)}{\hat{C_2}h}}.
  \end{equation}
\end{corollary}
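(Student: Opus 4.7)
The plan is to exploit the semigroup property \eqref{eq:semigroup_prop} to reduce the mixed derivative to an integral of a tensor product of single-variable gradients, and then apply the known Gaussian bound \eqref{eq:gaussian_two} in each factor. Concretely, writing
\begin{equation*}
p(h,x,y) = \int_M p(h/2, x, q)\, p(h/2, q, y)\, d\mu(q)
\end{equation*}
and observing that the $x$-dependence sits only in the first factor while the $y$-dependence sits only in the second (using also the symmetry $p(h/2,q,y)=p(h/2,y,q)$), differentiation under the integral yields
\begin{equation*}
\nabla_x \nabla_y p(h,x,y) = \int_M \nabla_x p(h/2,x,q) \otimes \nabla_y p(h/2,y,q)\, d\mu(q).
\end{equation*}
The interchange of differentiation and integration is justified by the smoothness of the heat kernel for positive times together with the dominated convergence theorem, using the Gaussian bound \eqref{eq:gaussian_two} as envelope.

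Next I would plug \eqref{eq:gaussian_two} into both factors. This produces the integrand
\begin{equation*}
\frac{\tilde{C}_1^{\,2}}{(h/2)\,\mu(B_{\sqrt{h/2}}(x))\,\mu(B_{\sqrt{h/2}}(y))}\, \exp\!\Big(-\tfrac{\dist^2(x,q)+\dist^2(q,y)}{\tilde{C}_2\, h/2}\Big).
\end{equation*}
The crucial step is to split the exponent: using the elementary inequality $a^2+b^2 \geq \tfrac{1}{2}(a+b)^2$ together with the triangle inequality $\dist(x,q)+\dist(q,y) \geq \dist(x,y)$ one obtains
\begin{equation*}
\dist^2(x,q)+\dist^2(q,y) \geq \tfrac{1}{2}\dist^2(x,y),
\end{equation*}
and therefore, writing the exponent as a sum of two equal halves and estimating one half by this inequality,
\begin{equation*}
\exp\!\Big(-\tfrac{\dist^2(x,q)+\dist^2(q,y)}{\tilde{C}_2\, h/2}\Big) \leq \exp\!\Big(-\tfrac{\dist^2(x,y)}{2\tilde{C}_2\, h}\Big)\, \exp\!\Big(-\tfrac{\dist^2(x,q)+\dist^2(q,y)}{\tilde{C}_2\, h}\Big).
\end{equation*}
This pulls out the desired Gaussian factor in $\dist(x,y)$ while leaving an integrable remainder.

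It remains to integrate the remainder in $q$: dropping the $\dist^2(q,y)$ term and estimating annuli via the doubling property \eqref{eq:doubling_prop} gives
\begin{equation*}
\int_M \exp\!\Big(-\tfrac{\dist^2(x,q)}{\tilde{C}_2\, h}\Big)\, d\mu(q) \lesssim h^{d/2}.
\end{equation*}
Combining everything, and using \eqref{eq:doubling_prop} once more to swap $\mu(B_{\sqrt{h/2}}(\cdot))$ for $\mu(B_{\sqrt{h}}(\cdot))$ up to constants, the factor $h^{d/2}$ from the integral cancels one of the ball volumes (specifically $\mu(B_{\sqrt{h}}(y)) \asymp h^{d/2}$) and we arrive at
\begin{equation*}
|\nabla_x \nabla_y p(h,x,y)| \lesssim \frac{1}{h\,\mu(B_{\sqrt{h}}(x))}\, \exp\!\Big(-\tfrac{\dist^2(x,y)}{\hat{C}_2 h}\Big),
\end{equation*}
which is \eqref{eq:gaussian_three}.

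I expect no substantial obstacle here; the only subtlety is the exponent splitting that simultaneously extracts the Gaussian in $\dist(x,y)$ and leaves enough Gaussian decay in $q$ to integrate. The doubling property \eqref{eq:doubling_prop} is what turns the two ball-volume factors into a single one and absorbs the spurious power of $h$ from the Gaussian integral, so the result is indeed essentially a formal consequence of the semigroup property together with \eqref{eq:gaussian_two} and \eqref{eq:doubling_prop}, as hinted in the text.
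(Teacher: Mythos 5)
Your proof is correct, and it shares the paper's starting move — the semigroup split $p(h) = p(h/2)*p(h/2)$, differentiating each factor in its own variable, and applying the single-variable gradient Gaussian bound \eqref{eq:gaussian_two} — but it handles the resulting convolution $\int_M G_{Ch}(x,q)G_{Ch}(q,y)\,d\mu(q)$ in a genuinely different way. The paper evaluates this integral \emph{indirectly}: it uses the lower bound in \eqref{eq:gaussian_one} to dominate each $G_{Ch}$ from above by a heat kernel at a dilated time, collapses the integral with the semigroup property once more, and then uses the upper bound in \eqref{eq:gaussian_one} to turn the resulting heat kernel back into a Gaussian in $\dist(x,y)$. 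You instead estimate the integral \emph{directly}: you split the exponent, extract the Gaussian factor in $\dist(x,y)$ via $\dist^2(x,q)+\dist^2(q,y)\geq\tfrac12\dist^2(x,y)$, and integrate the remaining Gaussian in $q$ by annuli using the volume scaling \eqref{eq:doubling_prop}, then cancel the resulting $h^{d/2}$ against one ball volume. Both are sound; the paper's route is slicker in that it never actually integrates a Gaussian over $M$ (the semigroup does that work) while yours is more elementary and transparent about where the factor $h^{d/2}$ comes from, but it requires the explicit Gaussian-integral estimate on the manifold. Note also that the paper's route requires the \emph{lower} Gaussian bound on $p$ in \eqref{eq:gaussian_one}, whereas yours gets by with only the upper Gaussian bounds plus \eqref{eq:doubling_prop}.
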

\begin{proof}
  Spelling out the semigroup property \eqref{eq:semigroup_prop} gives $p(h,x,y) = \int_M p(h,x,q) p(h,q,y) \, d\mu(q)$. Hence, the mixed derivatives of the heat kernel can be estimated with the Gaussian bound \eqref{eq:gaussian_two} to obtain
  \begin{align*}
    |\nabla_x \nabla_y p(h,x,y)| &\leq  \int_M \Big|\nabla_x p\Big(\frac{h}{2}, x,q\Big)\Big| \Big|\nabla_y p\Big(\frac{h}{2}, q,y\Big)\Big| \, d\mu(q) \\
    &\lesssim \frac{1}{h} \int_M G_{\frac{\tilde{C_2}  h}{2}}(x, q)  G_{\frac{\tilde{C_2}  h}{2}} (q, y) \, d\mu(q).
  \end{align*}
  The claim follows now by using the Gaussian bound \eqref{eq:gaussian_one} and the semigroup property \eqref{eq:semigroup_prop} once more
  \begin{align*}
    \int_M G_{\frac{\tilde{C}_2 h}{2}}( x, q)  G_{\frac{\tilde{C}_2 h}{2}} ( q,y) \, d\mu(q) &\leq \int_M p\left(\frac{\tilde{C_2} h}{2 C_2}, x, q \right)  p\left(\frac{\tilde{C_2} h}{2 C_2}, q, y \right)\, d\mu(q)\\
    &= p\left(\frac{\tilde{C_2} h}{ C_2}, x, y \right)\\
    &\lesssim G_{\frac{\tilde{C_2} C_4 h}{ C_2}} (x, y). \qedhere
  \end{align*}
\end{proof}
The following ``Cut and Replace''-lemma will be used throughout the paper to reduce the domain of integration and replace the heat kernel by the zeroth-order approximation $G_h v_0$ in limits of terms similar to the energy \eqref{def:energy_h}. The cutting of the integration domain is due to the Gaussian bounds while the replacing of the heat kernel is a consequence of the asymptotic expansion.
\begin{lemma}\label{lem:cut_and_replace}
Let $f \in L^\infty(M \times M)$ and $\mathcal{I}_h$ be either the 
constant interpolation $\chi_h$ of \eqref{eq:piecewise_interpolation} or the variational interpolation \eqref{def:variation_interpolation} of Lemma \ref{lem:dedi}.
\begin{enumerate}[a)]
  \item \label{part:nonscaled_vanish} It holds for any $n \in \N$ and $C > 0$ that 
  \begin{equation}
    \lim_{h \downarrow 0} \int_M \int_M \left(\frac{\dist(x,y)}{\sqrt{h}}\right)^n G_{Ch}(x,y)  \mathcal{I}_h(x) (1-\mathcal{I}_h)(y) f(x,y) \dmx \dmy = 0.
  \end{equation}
  \item \label{part:cut_and_replace}  Let $k,\ell \in {0,1}$, $\xi_1 \in \Gamma(TM^\ell), \xi_2 \in \Gamma(TM^k)$ be smooth sections.
  Then it holds with $v_0 \in C^\infty(M \times M)$  defined by Remark \ref{rem:tools}.\ref{item:ass_expansion}) that one can \emph{cut} the domain of integration
  \begin{align}
    \lim_{h \downarrow 0} \frac{h^{(k + \ell)/2}}{h}\bigg|&\int_M \int_M \big\langle\nabla^k_y  \langle \nabla^\ell_x p(h,x,y), \xi_1(x)
    \rangle_x, \xi_2(y) \big\rangle_y \mathcal{I}_h(x)(1-\mathcal{I}_h)(y) f(x,y)\dmx \dmy \nonumber\\
    -& \int_M \int_{B_{\inj(x)}} \!\!\big\langle\nabla^k_y  \langle \nabla^\ell_x p(h,x,y), \xi_1(x)
    \rangle_x, \xi_2(y) \big\rangle_y \mathcal{I}_h(x)(1-\mathcal{I}_h)(y) f(x,y) \dmx \dmy \bigg| \nonumber\\
    = 0\label{eq:farfield}
  \end{align}
  and \emph{replace} the heat kernel by the zeroth-order term
  \begin{align}
     \lim_{h \downarrow 0} \frac{h^{(k + \ell)/2}}{h}\bigg| & \int_M \int_{B_{\inj(x)}} \!\!\big\langle\nabla^k_y  \langle \nabla^\ell_x p(h,x,y), \xi_1(x)
    \rangle_x, \xi_2(y) \big\rangle_y \mathcal{I}_h(x)(1-\mathcal{I}_h)(y) f(x,y) \dmx \dmy \nonumber \\
    - &\int_M \int_{B_{\inj(x)}} \!\!\big\langle\nabla^k_y  \langle 
      \nabla^\ell_x \big(G_h(x,y) v_0(x,y)\big), \xi_1(x)
    \rangle_x, \xi_2(y) \big\rangle_y \mathcal{I}_h(x)(1-\mathcal{I}_h)(y) f(x,y)\nonumber\\
    & \hspace{315pt} \times\dmx \dmy \bigg| \nonumber\\
    = 0.   \label{eq:reduced_kernel}
  \end{align}
\end{enumerate}

\end{lemma}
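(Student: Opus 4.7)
The plan is to treat both parts through a single principle: on integrands of this form, bounded energy $E_h(\mathcal{I}_h) = O(1)$ combined with Gaussian localization already yields a $O(\sqrt{h})$ bound via $\int\!\!\int p(h,x,y)\,\mathcal{I}_h(x)(1-\mathcal{I}_h)(y)\,d\mu(x)\,d\mu(y) = \sqrt{h}\,E_h(\mathcal{I}_h)$. Then one just has to manage how the extra factors — the polynomial $(\dist/\sqrt{h})^n$ in (a), the far-field restriction in the cut step, and the higher-order terms of the asymptotic expansion in the replace step — enter the prefactor.

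\textbf{Part (a).} I would absorb the polynomial into a wider Gaussian: for any $C' > C$ there is a constant $A_{n,C,C'}$ with $s^n e^{-s^2/(4C)} \leq A_{n,C,C'}\, e^{-s^2/(4C')}$ for all $s \geq 0$, giving $(\dist(x,y)/\sqrt{h})^n G_{Ch}(x,y) \lesssim G_{C'h}(x,y)$. Taking $C'$ sufficiently large and using the lower Gaussian bound in \eqref{eq:gaussian_one} together with \eqref{eq:doubling_prop} yields $G_{C'h}(x,y) \lesssim p(\tilde h, x, y)$ where $\tilde h := 4C'h/C_2$. The integral is then bounded by $\|f\|_\infty \int\!\!\int p(\tilde h)\mathcal{I}_h(1-\mathcal{I}_h)\,d\mu\,d\mu = \sqrt{\tilde h}\,\|f\|_\infty\, E_{\tilde h}(\mathcal{I}_h)$. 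Since $\mathcal{I}_h$ is a constant or variational interpolation of the scheme, $E_h(\mathcal{I}_h)$ is bounded (for $\chi_h$ by the hypothesis of Lemma~\ref{lem:Compactness}, for $u_h$ via \eqref{eq:vi_prop} together with the $L^2$-bound \eqref{eq:l2_lagrange_multiplier}); a short comparison argument using the semigroup property \eqref{eq:semigroup_prop} transfers this to a bound on $E_{\tilde h}(\mathcal{I}_h)$, and the result is $O(\sqrt{h}) \to 0$.

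\textbf{Part (b), cut.} Outside $B_{\inj}(x)$ one has $\dist(x,y) \geq \inj$, so the Gaussian upper bounds \eqref{eq:gaussian_two} and Corollary~\ref{cor:mixed_gaussian_bound} (together with the analogue for $\ell = k = 0$ from \eqref{eq:gaussian_one}) give $|\nabla_x^\ell \nabla_y^k p(h,x,y)| \lesssim h^{-(k+\ell)/2 - d/2}\, e^{-\inj^2/(Ch)}$. Hence the far-field integral is bounded by
\begin{equation*}
\frac{h^{(k+\ell)/2}}{h}\cdot h^{-(k+\ell)/2 - d/2}\, e^{-\inj^2/(Ch)}\,\|\xi_1\|_\infty\|\xi_2\|_\infty\|f\|_\infty\,\mu(M)^2 \;\lesssim\; h^{-1-d/2}\, e^{-\inj^2/(Ch)},
\end{equation*}
which vanishes as $h \downarrow 0$ since the exponential beats every polynomial.

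\textbf{Part (b), replace.} Fix $K > d/2 + (k+\ell)/2$ and apply the asymptotic expansion \eqref{eq:asymptotic_exp} to write
\begin{equation*}
p(h,x,y) - G_h(x,y)v_0(x,y) = G_h(x,y)\sum_{i=1}^{K} v_i(x,y)h^i + R_K(h,x,y),
\end{equation*}
with $|\nabla_x^\ell \nabla_y^k R_K| \leq C_K h^{K+1-d/2-k-\ell}$. The $R_K$-contribution, after multiplying by the prefactor $h^{(k+\ell)/2}/h$ and integrating over $M \times M$, is $O(h^{K - d/2 - (k+\ell)/2}) \to 0$. For each summand $G_h v_i h^i$ with $i \geq 1$, Leibniz together with the standard estimate $|\nabla^j G_h(x,y)| \lesssim h^{-j/2}(1 + (\dist/\sqrt h)^j)\,G_h(x,y)$ and smoothness of $v_i$ on the compact manifold gives $|\nabla_x^\ell \nabla_y^k (G_h v_i h^i)| \lesssim h^{i-(k+\ell)/2}\bigl(1 + (\dist/\sqrt h)^{k+\ell}\bigr)G_h(x,y)$. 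Applying Part (a) (the polynomial $(\dist/\sqrt h)^{k+\ell}$ is exactly of the form handled there) yields a bound of order $h^{i - (k+\ell)/2} \cdot \sqrt{h}$, which, multiplied by the prefactor $h^{(k+\ell)/2 - 1}$, produces $O(h^{i - 1/2}) \to 0$ for every $i \geq 1$.

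\textbf{Main obstacle.} The delicate bookkeeping lies in the replace step: derivatives on $G_h$ produce both an $h^{-1/2}$ factor and polynomial powers of $\dist/\sqrt h$ that must not spoil the Gaussian integrability. The observation that Part (a) can absorb any such polynomial into a slightly wider Gaussian while preserving the $O(\sqrt h)$ decay is what closes the estimate; without this, the derivatives of the higher-order terms $G_h v_i h^i$ could in principle offset the gain $h^i$. The cross terms in Leibniz where derivatives land on $v_i$ instead of $G_h$ are strictly easier since the $v_i$ and their derivatives are uniformly bounded on the compact manifold.
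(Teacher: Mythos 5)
Your proposal is correct and follows essentially the same route as the paper's proof: absorbing the polynomial factor into a wider Gaussian and comparing to the heat kernel in part (a), using the Gaussian bounds on $\nabla_x p$, $\nabla_x\nabla_y p$ plus the exponential decay at scale $\inj$ for the cut, and the asymptotic expansion split into a remainder term of order $h^{K+1-d/2-k-\ell}$ plus the lower-order summands $G_h v_i h^i$ handled by part (a) for the replace. The only bookkeeping remark is that the asymptotic expansion \eqref{eq:asymptotic_exp} requires $K > k + \ell + d/2$, which is slightly stronger than the $K > d/2 + (k+\ell)/2$ you stated as sufficient for the remainder to vanish; just take $K$ large enough to satisfy both, as the paper implicitly does.
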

\begin{proof}
  a) Note that the function $ t \mapsto t^n e^{-t}$ is bounded over $\R$ such that 
  \begin{equation*}
  \sup_{h,x,y} \left(\frac{\dist(x,y)}{\sqrt{h}}\right)^n G_{Ch}(x,y) \leq C_n\;  G_{2Ch}(x,y).
  \end{equation*}
  The Gaussian bound \eqref{eq:gaussian_one} plus the scaling of volumes \eqref{eq:doubling_prop} implies $G_{Ch}(x,y) \lesssim p( C_2C h,x,y)$. This, combined with the monotonicity 
  \eqref{eq:mon_const} and  the interpolation properties \eqref{eq:piecewise_interpolation}, \eqref{eq:vi_prop} yields part~\ref{part:nonscaled_vanish}) 
  \begin{align*}
    &\lim_{h \downarrow 0} \int_M \int_M \left(\frac{\dist(x,y)}{\sqrt{h}}\right)^n G_{Ch}(x,y)   \mathcal{I}_h(x)(1-\mathcal{I}_h)(y) \dmx \dmy\\
    &\lesssim  \lim_{h \downarrow 0} \int_M \int_M p(2 C_2 Ch,x,y) \mathcal{I}_h(x)(1-\mathcal{I}_h)(y) f(x,y)\dmx \dmy \\
    &= \lim_{h \downarrow 0}  \sqrt{2C_2  h} E_{2C_2 C h}(\mathcal{I}_h)\\  
    &\lesssim \lim_{h \downarrow 0} \sqrt{h} E_h(\mathcal{I}_h)\\
    &\leq \lim_{h \downarrow 0} \sqrt{h} E_h(\chi^0)\\
    &=0.
  \end{align*}

  The first step for \ref{part:cut_and_replace}) is to show that the far field contribution vanishes which yields \eqref{eq:farfield}.
  Therefore, the Gaussian bounds~\eqref{eq:gaussian_one}, \eqref{eq:gaussian_two}, \eqref{eq:gaussian_three}
  and the scaling of balls~\eqref{eq:doubling_prop} is used to estimate
  \begin{align*}
    &\lim_{h \downarrow 0}\left| \frac{h^{(k + \ell)/2}}{h}\int_M \int_{M \setminus B_{\inj(x)}} \langle\nabla^k_y  \langle \nabla^\ell_x p(h,x,y), \xi_1(x)
    \rangle_x, \xi_2(y) \rangle_y  \mathcal{I}_h(x)(1-\mathcal{I}_h)(y) f(x,y)\, d\mu(x,y)\right| \\
    & \leq \lim_{h \downarrow 0} \frac{h^{(k + \ell)/2}}{h}\int_M \int_{M \setminus B_{\inj(x)}} |\nabla^k_y \nabla^\ell_x p(h,x,y)| | \xi_1(x)|
    |\xi_2(y)| \mathcal{I}_h(x)(1-\mathcal{I}_h)(y)|\dmx \dmy\\
    &\lesssim  \lim_{h \downarrow 0} \frac{\|\xi\|_\infty}{h^2} \int_M \int_{M \setminus B_{\inj(x)}} 
     G_{Ch}(x,y) \dmx \dmy  \\
    &\lesssim  \lim_{h \downarrow 0}\frac{\|\xi\|_\infty}{h^2} \frac{G_{Ch}(\inj)}{h} \\
    &= 0.
    \end{align*}

  Turning towards \eqref{eq:reduced_kernel}, the asymptotic expansion \eqref{eq:asymptotic_exp}
  yields a $K \in \N$ such that
  \begin{align*}
    \lim_{h \downarrow 0} \frac{h^{(k + \ell)/2}}{h}\int_M \int_{B_{\inj(x)}} \langle\nabla^k_y  \langle \nabla^\ell_x \left(p(h,x,y) - G_h(x,y)\sum_{i = 0}^K v_i(x,y) h^i \right) , \xi_1(x)
    \rangle_x, \xi_2(y) \rangle_y\hspace{42pt} \\  \cdot \mathcal{I}_h(x)(1-\mathcal{I}_h)(y)\dmx \dmy  = 0. 
  \end{align*}
  Hence, the limit is the same when replacing the heat kernel with the approximated heat kernel. It remains to prove 
  that the summands with $i > 0$ vanish in the limit. The derivatives of the Euclidean heat kernel are given by 
  \begin{align*}
    \nabla_x G_h(x,y) &= \frac{\exp^{-1}_x(y)}{h} G_h(x,y)\\
    \nabla_y \nabla_x G_h(x,y) &= \left(\nabla^k_y \frac{\exp^{-1}_x(y)}{h}\right) G_h(x,y) +  \frac{\exp^{-1}_y(x)}{h} \otimes \frac{\exp^{-1}_x(y)}{h} G_h(x,y).
  \end{align*}
  The functions $\nabla \exp^{-1}_x(y), v_0(x,y), \nabla v_0(x,y)$ and $\nabla_x\nabla_y v_0(x,y)$ are bounded uniformly over
  $M \times M$  due to compactness of $M$ and smoothness of the functions in their domain of definition. This yields together with 
  $|\exp^{-1}_x(y)| = \dist(x,y)$ the estimate
  \begin{align*}
    &\left|\langle\nabla^k_y  \langle \nabla^\ell_x \left(G_h(x,y) v_i(x,y) h^i \right) , \xi_1(x) \rangle_x, \xi_2(y) \rangle \right| \\
    &\leq \|\xi_1\|_\infty \|\xi_2\|_\infty (|\nabla^k_y \nabla^\ell_x G_h(x,y)| + |\nabla^k_y  G_h(x,y)||\nabla^\ell_x v_i(x,y)|\\
    &\hspace{152.2pt}+ |\nabla^\ell_x  G_h(x,y)||\nabla^k_y v_i(x,y)|+ |\nabla^k_y \nabla^\ell_x v_i(x,y)|)\\
    &\leq \|\xi_1\|_\infty \|\xi_2\|_\infty \left(\frac{C}{h^{(k+\ell)/2}} +  
    \frac{\dist^2(x,y)}{h^{k + \ell}}+\frac{\dist(x,y)}{h^k} + \frac{\dist(x,y)}{h^\ell} +1 \right)G_h(x,y).
  \end{align*}
  Now with part \ref{part:nonscaled_vanish}) one concludes for $i > 0$
  \begin{align*}
    &\lim_{h \downarrow 0} \left|\frac{h^{(k + \ell)/2}}{h}\int_M \int_{B_{\inj(x)}} \langle\nabla^k_y  \langle \nabla^\ell_x \left(G_h(x,y) v_i(x,y) h^i \right) , \xi_1(x)
    \rangle_x, \xi_2(y) \rangle  \mathcal{I}_h(x)(1-\mathcal{I}_h)(y)\, d\mu(x,y) \right|\\
    &\lesssim \lim_{h \downarrow 0} \int_M \int_M h^{(k + \ell)/2}\left(\frac{C}{h^{(k+\ell)/2}} +  
    \frac{\dist^2(x,y)}{h^{k + \ell}}+\frac{\dist(x,y)}{h^k} + \frac{\dist(x,y)}{h^\ell} +1 \right) \\
    &\hspace{100pt}\cdot G_h(x,y)   \mathcal{I}_h(x)(1-\mathcal{I}_h)(y)\dmx \dmy\\
     &\lesssim \lim_{h \downarrow 0} \int_M \int_M \left(1 +  
    \frac{\dist^2(x,y)}{h}+\frac{\dist(x,y)}{\sqrt{h}}  \right) G_h(x,y)   \mathcal{I}_h(x)(1-\mathcal{I}_h)(y)\dmx \dmy\\
    &=0. \qedhere
  \end{align*}
\end{proof}

The next Lemma will be crucial in the proof of Proposition \ref{prop:metricslope}. It will be necessary to estimate the Hessian of the squared distance function which appear in second derivatives of the heat kernel. In the Euclidean case $M = \R^d$ it holds $\nabla^2_x \frac{|x-y|^2}{2} =  Id_d$. On a manifold this holds only up to second order errors due to the spreading of geodesics by curvature. The first derivative of the distance on the manifold is given by $\nabla_x \dist^2(x,y) = 2 \exp^{-1}_x(y)$ such that after a suitable identification of tangent spaces the second derivative is given by $D_x \exp^{-1}_x(y)$. To see that this quantity is close to the identity one shows that its differential of the inverse $(D \exp_x)_{\exp^{-1}_x(y)}$ is almost the identity. Now, one can already see that this is related to the Jacobi fields $J(t) = (D \exp_x)_{tv} (tw)$ satisfying the Jacobi equation $\ddot{J}(t) + R(\dot{J}(t), \dot{\gamma}_{x,y}(t))\dot{\gamma}_{x,y}(t) = 0$ which will be the main tool for the proof.

    \begin{lemma}\label{lem:hessian_dist_equal_identity}
    For any $x,y\in M$ with $\dist(x,y)\leq \inj \wedge  \sqrt[4]{\frac{1}{2 d^3 C_R^2 }e^{-1 - d C_R \diam(M)^2}}$ and $v,w \in T_y M$  it holds the following error estimate of the Hessian of the (half) squared distance function 
    \begin{equation}\label{eq:taylor_log}
      \left|\Big\langle \nabla_y^2 \frac{\dist^2(x,y)}{2}(w), v \Big\rangle_y  - \langle w,v \rangle_y \right| \lesssim \dist^2(x,y)|v||w|.
    \end{equation}
    \end{lemma}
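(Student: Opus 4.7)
The plan is to identify the Hessian with a Jacobi field quantity and then compare to the Euclidean model. Let $L := \dist(x,y)\le \inj$ and let $\gamma:[0,L]\to M$ be the unit-speed geodesic from $y$ to $x$, so that $\exp_y^{-1}(x)=L\dot\gamma(0)$ and $\nabla_y \tfrac{1}{2}\dist^2(x,y)=-\exp_y^{-1}(x)$. First I would derive a Jacobi field representation of the Hessian by considering the variation $y_s:=\exp_y(sw)$ and the $[0,1]$-parametrised geodesics $\Gamma(s,u):=\exp_{y_s}\!\big(u\,\exp_{y_s}^{-1}(x)\big)$ joining $y_s$ to $x$. The vector field $K(u):=\partial_s\Gamma(s,u)|_{s=0}$ is a Jacobi field along $\gamma$ (reparametrised to $[0,1]$) with $K(0)=w$ and $K(1)=0$, and the symmetry $\tfrac{D}{ds}\partial_u=\tfrac{D}{du}\partial_s$ together with $\partial_u\Gamma(s,u)|_{u=0}=\exp_{y_s}^{-1}(x)$ gives
\[
\Big\langle \nabla_y^2 \tfrac{1}{2}\dist^2(x,y)(w),\,v\Big\rangle_y \;=\; -\,\langle K'(0),v\rangle_y.
\]
Rescaling to unit speed, $J(t):=K(t/L)$ is the Jacobi field along $\gamma$ with $J(0)=w$, $J(L)=0$ and $K'(0)=L\,J'(0)$; in the Euclidean situation the ``flat Jacobi field'' $J_{\mathrm{flat}}(t):=(1-t/L)\,PT_{y\to\gamma(t)}w$ would give $-L\,J_{\mathrm{flat}}'(0)=w$.

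Next I would set $E:=J-J_{\mathrm{flat}}$, which satisfies $E(0)=E(L)=0$ and, since $J_{\mathrm{flat}}''\equiv 0$ (covariantly along $\gamma$) and $J$ solves the Jacobi equation,
\[
E''(t)\;=\;-\,R\big(J(t),\dot\gamma(t)\big)\dot\gamma(t).
\]
Integrating twice and enforcing $E(L)=0$ yields the standard representation
\[
E'(0)\;=\;-\frac{1}{L}\int_0^L (L-s)\,E''(s)\,ds,
\]
so using $\|R\|\le C_R$ one obtains $|E'(0)|\le \tfrac12 C_R\,L\,\sup_{[0,L]}|J|$. Paired with $v$, this reduces the whole lemma to a uniform bound $\sup_{[0,L]}|J|\le C|w|$.

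The third step is this Jacobi bound. Splitting $J=J_1+J_2$ with initial data $(w,0)$ and $(0,a)$ respectively, a Gr\"onwall argument applied to the first-order system $\tfrac{d}{dt}(|J_i|^2+|J_i'|^2)\le(1+C_R)(|J_i|^2+|J_i'|^2)$ yields $|J_i(t)|\lesssim(|J_i(0)|+|J_i'(0)|)\,e^{(1+C_R)L/2}$. The constraint $J(L)=0$ then determines $a$ via the Jacobi endpoint map $a\mapsto J_2(L)=L(d\exp_y)_{L\dot\gamma(0)}(a)$, whose inverse exists because $L\le\inj$ and has operator norm of order $1/L$. This is precisely where the quantitative hypothesis on $\dist(x,y)$, with the exponential factor controlled by $C_R\,\diam(M)^2$, is used to absorb all constants and conclude $|a|\lesssim|w|/L$, hence $\sup_{[0,L]}|J|\lesssim|w|$.

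Putting the three steps together, $|L\,J'(0)+w|=L|E'(0)|\lesssim C_R L^2|w|$, and Cauchy--Schwarz with $v$ delivers \eqref{eq:taylor_log}. The main obstacle I expect is the third step: since one has no pointwise a priori control on $J'(0)$, one has to invert the Jacobi endpoint map with \emph{explicit} constants depending only on $C_R$ and $\diam(M)$, which is exactly what forces the diameter-dependent exponential term in the smallness assumption on $\dist(x,y)$.
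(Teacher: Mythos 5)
Your route is genuinely different from the paper's. The paper invokes Villani's matrix formula
$\langle \nabla_y^2 \frac{\dist^2(x,y)}{2}(w),v\rangle = \langle J_0(1)^{-1}J_1(1)w,v\rangle$,
with the Jacobi matrices $J_0,J_1$ on $[0,1]$ determined by pure \emph{initial-value} data $(J(0),\dot J(0))=(0,I)$ and $(I,0)$. This converts the whole lemma into a one-sided statement: a Gr{\"o}nwall bound on $|J_k|_F^2+|\dot J_k|_F^2$ gives $|J_k(1)-I_d|_F\lesssim\dist^2(x,y)$, and a Neumann series then inverts $J_0(1)$ once that distance is below one --- which is exactly what the stated threshold on $\dist(x,y)$ enforces. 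You instead work with the single boundary-value Jacobi field $J(0)=w$, $J(L)=0$ and the identity $\nabla_y^2\frac{\dist^2}{2}(w)=-L\,J'(0)$, comparing to the parallel-transported ``flat'' field; your representation formula and your Green's-function reduction $E'(0)=-\frac{1}{L}\int_0^L(L-s)E''(s)\,ds$, which brings everything down to $\sup_{[0,L]}|J|\lesssim|w|$, are both correct.

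The gap is exactly where you suspect, and it is worth being precise about why. Your Step~3 needs a \emph{lower} bound on the Jacobi endpoint map $a\mapsto J_2(L)$; Gr{\"o}nwall on $|J_i|^2+|J_i'|^2$ only gives upper bounds, and $L\le\inj$ gives invertibility but no control on the norm of the inverse near a conjugate point. Two standard ways close it. (i) Reuse the paper's trick on the endpoint map itself: writing $J_2(L)=L A(L)a$ with $A(L)=(d\exp_y)_{L\dot\gamma(0)}$, one has $|A(L)-I|\le\frac{1}{L}\int_0^L(L-s)C_R|A(s)|\,ds\lesssim C_R L^2 e^{\text{const}\cdot L}$ by Gr{\"o}nwall, so the smallness hypothesis lets you invert $A(L)$ by a Neumann series exactly as the paper inverts $J_0(1)$. (ii) Avoid the inversion entirely by bootstrapping: the Green's function for $E''=f$, $E(0)=E(L)=0$ is bounded by $L/4$, so $\sup|E|\le\frac{C_RL^2}{4}(|w|+\sup|E|)$, which self-absorbs once $C_RL^2<4$ and directly yields $\sup|J|\lesssim|w|$. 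Either repair completes your argument; the paper's formulation is cleaner precisely because the IVP representation plus Neumann series never requires a lower bound.
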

    \begin{proof}
    We follow the lines of Villani \cite[Chapter 14]{old_new}, \cite[Equation (4.2)]{distsquared}.
    Denote by $\gamma(t) := \exp_y(t \exp_y^{-1}(x))$ the geodesic from $y$ to $x$ with velocity $|\dot{\gamma}| = \dist(x,y)$. Further introduce the moving orthonormal frame $(e_1(t), \dots, e_d(t)), e_1(0) = \frac{\exp_y^{-1}(x)}{|\exp_y^{-1}(x)|}$,
    with $e_j(t)$ evolving with parallel transport. With the Riemannian curvature tensor $R$ (which is bounded as $M$ is compact and smooth) denote
    \begin{equation*}
    R_{ij}(t) = \langle R(\dot{\gamma}(t), e_i(t)) \dot{\gamma}(t), e_j(t) \rangle_{\gamma(t)}.
    \end{equation*}  
    One calls a matrix $J$ a Jacobi matrix if it satisfies the Jacobi matrix equation
    \begin{equation}
      \ddot{J}(t) + R(t) J(t) = 0.
    \end{equation}
    The dots represent the second covariant derivative. The Jacobi matrices are the fields describing how geodesics spread apart over time. They are closely related to Jacobi fields which describe the behavior of the differential of the exponential function.
    Furthermore, introduce the two Jacobi matrices $J_0$ and $J_1$ that solve the Jacobi matrix equation with initial conditions given by 
    \begin{equation}
      \begin{array}{cc}
        J_0(0) = 0_d & \dot{J}_0(0) = I_d\\
        J_1(0) = I_d &\dot{J}_1(0) = 0_d.
      \end{array}
    \end{equation} 
    Then by  \cite[Chapter 14]{old_new}, \cite[Equation (4.2)]{distsquared} it holds 
    \begin{equation}\label{eq:hess_dist_equal_jacobi}
      \langle \nabla_y^2 \frac{\dist^2(x,y)}{2}(w), v \rangle_y = \langle J_0(1)^{-1} J_1(1) \cdot w, v \rangle_y 
    \end{equation} 
    where $J_0(1)^{-1} J_1(1) \cdot w = (J_0(1)^{-1} J_1(1))_{ij} w^j e_i(1)$.
    The next step is to prove that $J_k(1)$ is close to the identity for $k = 0,1$. To that aim define for any Jacobi matrix $J$ the auxiliary function $\phi(t) := |J(t)|_F^2 + |\dot{J}(t)|_F^2$ with $|\cdot|_F$ denoting the Frobenius norm.
    Using $|R_{ij}| \leq C_R |\dot{\gamma}|^2|e_i||e_j| = C_R \diam(M)^2$, $\frac{d}{dt}|J_{ij}|^2 = \frac{d}{dt} |\langle J_i, e_j \rangle|^2 = 2 J_{ij} \dot{J}_{ij}$ and $\frac{d}{dt}|\dot J_{ij}|^2 =2 \dot J_{ij} \ddot{J}_{ij}$ one computes
    \begin{align*}
      \dot{g} &= 2\sum_{i,j=1}^d J_{ij} \dot{J}_{ij} + J_{ij} \ddot{J}_{ij}\\
      &= 2\sum_{i,j=1}^d J_{ij} \dot{J}_{ij} + J_{ij} \sum_{k = 1}^d R_{ik} J_{kj}\\
      &\leq \sum_{i,j=1}^d |J_{ij}|^2 +| \dot{J}_{ij}|^2 + \sum_{k = 1}^d C_R \diam(M)^2 (|J_{ij}|^2 +  |J_{kj}|^2)\\
      &= (1+ d C_R \diam(M)^2) \phi.
    \end{align*}
    Hence, the Gr{\"o}nwall inequality yields $|J(t)|_F^2 \leq e^{(1+ d C_R \diam(M)^2)t} (|J(0)|_F^2 + |\dot{J}(0)|^2)$ which implies for $J = J_k$, $k= 0,1$, $0 \leq t \leq 1$ that 
    \begin{equation}
      |J_k(t)|^2 \leq d  e^{1+ d C_R \diam(M)^2}.
    \end{equation}
    Furthermore, due to $J_k(1) - I_d = \int_0^1 \int_0^t \ddot {J_k}(s) \,ds \,dt$ and $|R|_F^2 \leq d^2 C_R^2 \dist^4(x,y)$ it holds
    \begin{align*}
      |J_k(1) - I_d|_F^2 \leq \int_0^1 \int_0^t |R(s)|_F^2 |{J_k}(s) |_F^2\,ds \,dt 
      \leq d^3 C_R^2 e^{1 + d C_R \diam(M)^2} \dist^4(x,y).
    \end{align*}
    Due to the Neumann series it holds for $\dist(x,y) \leq \sqrt[4]{\frac{1}{2 d^3 C_R^2 }e^{-1 - d C_R \diam(M)^2}}$ that $|J_0(1)^{-1}|_F \leq \frac{1}{1 - |I_d -J_0(1)|_F } \leq 2$ which lets us conclude
    \begin{align*}
      |J_0(1)^{-1}J_1(1) - I_d|_F &\leq |J_0(1)^{-1}|_F (|J_0(1) - I_d |_F + |J_1(1) - I_d |_F )\\
      & \leq 4 \sqrt{d^3 C_R^2 e^{1 + d C_R \diam(M)^2}}\dist^2(x,y).
    \end{align*} 
    The step is finished by putting the identity \eqref{eq:hess_dist_equal_jacobi} into \eqref{eq:taylor_log} and using the just established bound
    \begin{align*}
      \left|\langle \nabla_y^2 \frac{\dist^2(x,y)}{2}(w), v \rangle_y  - \langle v,w \rangle_y \right| = \left|\langle \left(J_0(1)^{-1} J_1(1) - I_d\right) \cdot w, v \rangle_y \right| \lesssim  \dist^2(x,y) |v||w|.
    \end{align*}
  \end{proof}
We finish this chapter with some useful estimates that are well known in the literature

\begin{lemma}[Useful estimates]
  \begin{enumerate}
    \item For any function $u:M \rightarrow [0,1]^P$ with $\sum_{i=1}^P u_i = 1$ $\mu$-a.e.\ it holds
    \begin{equation}\label{eq:kernel_diff}
      \int_M |u - p_h * u| \, d\mu \leq 2 \sqrt{h} E_h(u).
    \end{equation} 
    \item For any $\chi: M \rightarrow \{0, 1\}^P$ with $\sum_i^P \chi = 1$ $\mu$-a.e.\ and any 
    $0 < h \leq h_0$ it holds
    \begin{align}
      E_{Ch}(\chi) &\leq \max\{\frac{1}{\sqrt{C}}, \sqrt{C}\} E_h(\chi), \label{eq:mon_const}\\
      E_{h_0}(\chi) &\lesssim (E_{h}(\chi) + 1) \label{eq:approx_mon}.
    \end{align}
    \item For any $u, \tilde{u}: M \rightarrow [0, 1]^P$ with $\sum_i^P u_i = 
    \sum_i^P \tilde u_i= 1$ a.e. it holds
    \begin{equation}\label{eq:l2_u}
      \sum_{i = 1}^P \int_M |u_i - \tilde{u}_i|^2 \, d\mu \leq \frac{1}{2\sqrt{h}}d_h^2 (u, \tilde{u})
      + 2\sqrt{h} (E_h(u) + E_h (\tilde{u})).
    \end{equation}
    In particular for any $\chi, \tilde{\chi}: M \rightarrow \{0, 1\}^P$ with $\sum_{i = 1}^P \chi_i = 
    \sum_{i = 1}^P \chi_i= 1$ a.e. it holds
    \begin{equation}\label{eq:l1_estimate}
      \sum_{i = 1}^P \int_M |\chi_i - \tilde{\chi}_i| \, d\mu \leq \frac{1}{2\sqrt{h}}d_h^2 (\chi, \tilde{\chi})
      + 2\sqrt{h} (E_h(\chi) + E_h (\tilde{\chi})).
    \end{equation}
  \end{enumerate}
\end{lemma}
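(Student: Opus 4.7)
The plan is to tie all three inequalities to the rewriting
\begin{equation*}
\sqrt{h}\,E_h(u) \;=\; \sum_{i=1}^P \int_M u_i\,(1 - p_h * u_i)\,d\mu,
\end{equation*}
which follows from the definition \eqref{def:energy_h} together with $\sum_i u_i = 1$ and the normalization $\int_M p(h,x,y)\,d\mu(y) = 1$.

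For \eqref{eq:kernel_diff} I would start from $(u_i - p_h*u_i)(x) = \int_M p(h,x,y)(u_i(x) - u_i(y))\,d\mu(y)$ and estimate the integrand via the elementary pointwise bound $|a-b| \le a(1-b) + b(1-a)$ valid for $a,b\in[0,1]$. Integrating in $x$ and using the symmetry $p(h,x,y) = p(h,y,x)$, the two resulting double integrals coincide and combine into $2\int\int p(h,x,y)\,u_i(x)(1-u_i(y))\,d\mu(y)\,d\mu(x)$. Summing over $i$ yields \eqref{eq:kernel_diff}.

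For \eqref{eq:mon_const} and \eqref{eq:approx_mon} I exploit that for $\chi_i\in\{0,1\}$ with $\sum_i\chi_i=1$ one has $\sqrt{h}\,E_h(\chi) = g(h) := 1 - \sum_i \langle\chi_i, e^{-h\Delta_\mu}\chi_i\rangle_{L^2(\mu)}$. Expanding each $\chi_i$ in an $L^2(\mu)$-orthonormal eigenbasis of $\Delta_\mu$ with eigenvalues $\lambda_k\ge 0$ gives $g(h) = \sum_{i,k}|a_{i,k}|^2(1 - e^{-h\lambda_k})$, from which I read off that $g$ is concave, nondecreasing, vanishes at $0$, and is bounded by $1$. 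Concavity through the origin yields $g(Ch)\le C\,g(h)$ for $C\ge 1$, hence $E_{Ch}(\chi)\le\sqrt{C}\,E_h(\chi)$; for $C\le 1$ monotonicity gives $g(Ch)\le g(h)$, hence $E_{Ch}(\chi)\le E_h(\chi)/\sqrt{C}$; together these prove \eqref{eq:mon_const}. For \eqref{eq:approx_mon}, the trivial bound $g(h_0)\le 1$ yields $E_{h_0}(\chi)\le 1/\sqrt{h_0}$, which is $\lesssim E_h(\chi)+1$ with a constant depending on the fixed parameter $h_0$.

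For \eqref{eq:l2_u} I set $v_i := u_i - \tilde u_i \in [-1,1]$; comparing with \eqref{def:dist_h} reveals $\sum_i \int v_i(p_h*v_i)\,d\mu = \tfrac{1}{2\sqrt{h}}d_h^2(u,\tilde u)$. I then decompose $\int v_i^2\,d\mu = \int v_i(p_h*v_i)\,d\mu + \int v_i(v_i - p_h*v_i)\,d\mu$, bound the second term by $\int|v_i - p_h*v_i|\,d\mu$ using $|v_i|\le 1$, apply the triangle inequality $|v_i - p_h*v_i|\le|u_i - p_h*u_i| + |\tilde u_i - p_h*\tilde u_i|$, and finish by applying \eqref{eq:kernel_diff} to both $u$ and $\tilde u$. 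Inequality \eqref{eq:l1_estimate} is then immediate from the pointwise identity $|\chi_i - \tilde\chi_i|^2 = |\chi_i - \tilde\chi_i|$ for characteristic functions. None of the steps requires a substantially new idea; the only non-elementary ingredient is the concavity of $g$ used in \eqref{eq:mon_const}, which rests on the spectral theorem for $\Delta_\mu$.
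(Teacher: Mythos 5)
Most of your proposal tracks the paper's argument closely and is correct. Your derivation of \eqref{eq:kernel_diff} from the elementary bound $|a-b|\le a(1-b)+b(1-a)$ and the kernel symmetry is exactly the paper's route, and your proof of \eqref{eq:l2_u} and \eqref{eq:l1_estimate} via the decomposition $\int v_i^2 = \int v_i\,(p_h*v_i) + \int v_i(v_i-p_h*v_i)$ is the integrated version of the pointwise inequality the paper quotes. For \eqref{eq:mon_const}, the spectral expansion giving $g(h)=\sqrt{h}\,E_h(\chi)=\sum_{i,k}|a_{i,k}|^2(1-e^{-h\lambda_k})$ is a clean way to obtain the two monotonicity facts (that $g$ is nondecreasing and that $g(h)/h$ is nonincreasing) that the paper attributes to a reference; the argument is sound and arguably more self-contained.

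The genuine gap is in your treatment of \eqref{eq:approx_mon}. You bound $E_{h_0}(\chi)\le 1/\sqrt{h_0}$ and then observe this is $\lesssim E_h(\chi)+1$ ``with a constant depending on the fixed parameter $h_0$.'' But the implicit constant in \eqref{eq:approx_mon} must be independent of $h_0$ (or at least bounded as $h_0\downarrow 0$), otherwise the estimate becomes vacuous precisely where it is used. In the proof of Lemma~\ref{lem:Compactness} the inequality is invoked in the chain
\begin{equation*}
\int_M |\chi_h - p(h_0)*\chi_h|\,d\mu \le 2\sqrt{h_0}\,E_{h_0}(\chi_h) \lesssim \sqrt{h_0}\big(E_h(\chi_h)+1\big),
\end{equation*}
and the point is that the right-hand side tends to $0$ \emph{uniformly in $h$} as $h_0\downarrow 0$, given the uniform bound on $E_h(\chi_h)$ from \eqref{eq:ass_compactness}. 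Your version replaces the right-hand side by $\frac{1}{\sqrt{h_0}}\cdot\sqrt{h_0}\cdot(E_h+1)\sim 1$, which does not vanish, so the approximation argument breaks down. This is why the paper emphasizes that \eqref{eq:approx_mon} is non-trivial and points to \cite[Lemma 13]{kramer2024efficient}, whose proof relies on the Cheeger--Colding segment inequality to compare thresholding energies across well-separated scales without losing control of the constant. In short: the trivial bound $g(h_0)\le 1$ does not suffice; a geometric ingredient is genuinely needed here.
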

\begin{proof}
  
  Estimate \eqref{eq:kernel_diff} follows immediately from the inequality 
  $|u - u'| \leq u(1-u') + u'(1-u)$ which holds true for any $[0,1]$-valued function. Contrary, 
  the inequality \eqref{eq:approx_mon} is non-trivial and was proven in \cite[Lemma 13]{kramer2024efficient}
  with the help of the segment inequality of Cheeger and Colding~\cite{MR1320384}. Estimate
  \eqref{eq:mon_const} follows by monotonicity of $\sqrt{h}E_h(\chi)$ and $\frac{1}{\sqrt{h}}E_h(\chi)$, 
  see \cite[Lemma 11]{kramer2024efficient}. The last result
  \eqref{eq:l1_estimate} stems from the definition of the energy and distance term together with 
  the fact that for any functions $u, \tilde{u} \in [0,1]$ it holds
  \begin{equation*}
    |u - \tilde{u}|^2 \leq (u - \tilde{u})p(h)*(u - \tilde{u}) + |u - p(h)*u| +
     |\tilde{u} - p(h)*\tilde{u}|.
  \end{equation*}
  For more information see for example \cite[Lemma 3]{MR4385030} or 
  \cite[Lemma 15]{kramer2024efficient}.
\end{proof}
\section{Proofs}\label{cha:proofs}
\begin{proof}[Proof of Lemma \ref{lem:mmi}]
  The relation \eqref{def:mms} is proven in \cite[Lemma 8]{kramer2024efficient}. The definition \eqref{def:dist_h} and the fact that the heat kernel is positive definite (which itself is a consequence of the semigroup property \eqref{eq:semigroup_prop}) imply that $d_h$ is a distance on $\mathcal{M}$. The compactness of $(\mathcal{M}, d_h)$ and the continuity of $E_h^\ell$ follow by the fact that $d_h$ metrizes weak convergence. To proof this lets first assume that $\chi_n \xrightharpoonup{n \rightarrow \infty} \chi$: Then it holds that $p*(\chi_n - \chi) \rightarrow 0$ strongly in $L^2(M, \mu)$. To see this, note that $\|p(h)*\xi\|_{L^2} + \|\nabla p(h)*\xi\|_{L^2} \leq \|\xi\|_{L^2} + \|\nabla \xi\|_{L^2}$ which implies strong $L^2$ compactness for any uniformly bounded family of function in $H^1(M, \mu)$. The claim is thus valid for the regular function family, after identifying the limit by and integration by parts. By a separability argument, and $\sup_n\|\chi_n - \chi\| < \infty$, the claim is also true for $\chi_n - \chi$. The strong convergence of $p(h)*(\chi_n - \chi)$ coupled with the weak convergence of $\chi_n - \chi$ turns to $d_h(\chi_n, \chi) = \int_M (\chi_n - \chi) \cdot p(h)*(\chi_n - \chi) \, d\mu \rightarrow 0$ for $n \rightarrow \infty$. 

  For the other direction assume that $d_h(\chi_n, \chi) \rightarrow 0$ for $n \rightarrow 0$: As $\sup_n \|\chi_n \|_{L^2} < \infty$ there exists a subsequence (again denoted by $n$) that converges to some limit $\tilde{\chi}$. But from the previous step this implies $\lim_{n \rightarrow \infty} d_h(\chi_n, \tilde{\chi}) = 0$. Together with the assumption $d_h(\chi_n, \chi) \rightarrow 0$ and the properties of $d_h$ being a distance, if follows first $d_h(\chi, \tilde{\chi}) = 0$ and then $\chi = \tilde{\chi}$. Now the limit is fixed, so the whole sequence has to converge weakly to $\chi$.
  
\end{proof}
\begin{proof}[Proof of Lemma \ref{lem:dedi}]
  The statement is a classical result of gradient flow theory, see for example \cite[Theorem 3.1.4, Lemma 3.1.3]{greenbook}.
  The additional inequality \eqref{eq:dedi} follows immediately by the telescoping property of \eqref{eq:fdedi}. Furthermore,
  estimate \eqref{eq:bound_dist} is implied by \eqref{eq:dedi} and \eqref{eq:compare_dist}.
\end{proof}
\begin{proof}[Proof of Lemma \ref{lem:Compactness}]
  \emph{Step 1} Modulus of continuity: For any in time piecewise constant $\chi \in \{0,1\}^P$ with $\sum_{i = 1}^P \chi_i = 1$
  define 
  \begin{align*}
    \mathcal{I}(s) &:= \sum_{i = 0}^P \int_{(s,T) \times M} |\chi_i(t) - \chi_i(t-s)| \, d\mu \, dt\\
    C_0 &:= \int_{(h,T)} \frac{1}{2h^2} d_h^2(\chi(t), \chi(t-h)) \,dt + 4 \int_0^T E_h(\chi(t))\,dt.
  \end{align*}
  Then it holds 
  \begin{align}
    \mathcal{I}(s) \leq C_0 \left\{\begin{array}{ll}
      \frac{s}{\sqrt{h}} & \text{for } s \leq h\\
      2 \sqrt{h} & \text{for } h\leq s \leq \sqrt{h}\\
      4s & \text{for } \sqrt{h} \leq s
    \end{array} \right\} \leq 4 C_0 \sqrt{s}.
  \end{align}
  The proof of this \emph{Step 1} is exactly as in \cite{MR4385030} - it is restated for the 
  convenience of the reader.

  Beginning with the first case $s \leq h$ we notice that by the piecewise interpolation it 
  holds $\mathcal{I}(s) \leq \frac{s}{h} \mathcal{I}(h)$. Also by the $\L^1$- estimate 
  \eqref{eq:l1_estimate} it holds 
  \begin{equation*}
    \mathcal{I}(s) \leq \int_s^T \frac{1}{2\sqrt{h}} d_h^2(\chi, \chi(\cdot - s)) \,dt +
    4 \sqrt{h} \int_0^T E_h(\chi) \, dt
  \end{equation*}
  and hence $\mathcal{I}(h) \leq C_0 \sqrt{h}$.

  For the case $h \leq s \leq \sqrt{h}$ one first assumes $s = N h$ with $N \in \N$. An easy 
  computation shows that the triangle inequality 
  \begin{equation*}
    d_h^2{\chi, \chi(\cdot - s)} \leq N \sum_{n = 1}^N d_h^2(\chi( \cdot - (n-1)h) ,\chi(\cdot
    -nh)) \, dt
  \end{equation*}
  holds which immediately implies by the first case the inequality 
  \begin{equation*}
    \mathcal{I}(s) \leq \left( \frac{s}{h} \right)^2 \int_h^T \frac{1}{2\sqrt{h}}
    d_h^2(\chi, \chi(\cdot - h)) \, dt + 4 \sqrt{h} \int_0^T E_h(\chi) \,dt \leq C_0 \max \left\{
      \frac{s^2}{\sqrt{h}}, \sqrt{h} \right\} = C_0 \sqrt{h}.
  \end{equation*}
  For an arbitrary $h \leq s \leq \sqrt{h}$ one uses the decomposition $s = Nh + s'$ with $s'
  \in (0,h)$ and uses $\mathcal{I}(s) \leq \mathcal{I}(Nh) + \mathcal{I}(s')$.

  Similarly, for the last case $\sqrt{h} \leq s$ one uses $s = N \sqrt{h} + s'$ with $N \in \N, 
  s' \leq \sqrt{h}$ together with $\mathcal{I}(s) \leq N \mathcal{I}(\sqrt{h}) + \mathcal{I}(s')$.

  \emph{Step 2:} Pre-compactness of $\{\chi_h\}_{h \downarrow 0}$.

  For every $h_0 \geq h > 0$ the estimates \eqref{eq:kernel_diff} and \eqref{eq:approx_mon} 
  tell us that $\chi_h$ is uniformly close to $p(h_0) * \chi_h$ in $L^{\infty}((0,T), L^2(M,\mu))$
  (and thus in $L^1((0,T)\times M, dt\times \mu))$ as
  \begin{align*}
    \int_M |\chi_h - p(h_0)*\chi|^2 \, d\mu \leq \int_M |\chi_h - p(h_0)*\chi| \, d\mu \leq 
    2\sqrt{h_0} E_{h_0}(\chi_h) \lesssim \sqrt{h_0}(E_h(\chi_h) + 1).
  \end{align*}
  Hence, it is enough to prove for fixed $h_0 >0$ that $\{p(h_0)*\chi_h\}_{h\downarrow 0}$ is 
  compact in $L^1((0,T)\times M, dt \times \mu)$.

  To that aim, one utilizes \cite[Theorem 4.7.28]{MR2267655} which characterizes $L^1$-compactness
  on arbitrary measure spaces. Namely, for our finite measure $dt \llcorner_{(0,T)} \times \mu$ 
  and bounded sequence $\{p(h_0)*\chi_h\}_{h\downarrow 0}$ it is enough to prove that for any 
  $\eps > 0$ there exists a finite partition $\pi$ of $(0,T) \times M$ such that for every $h > 0$
  it holds 
  \begin{equation}
    \| p(h_0)*\chi_h - \mathbb{E}^\pi \left(p(h_0)*\chi_h\right)\|_{L^1((0,T)\times M)} \leq \eps
  \end{equation}
  with 
  \begin{equation*}
    \mathbb{E}^\pi f(t,x) := \frac{1}{(dt \times \mu) (E)}\int_E f \, dt \, d\mu \quad \text{if } (t,x) \in E \in \pi.
  \end{equation*}
  To that aim, let $\eps > 0$ and partition $(0,T)\times M$ in products of the form $E_{\ell,k} = [(\ell-1)s, \ell s) \times U_r(x_k)$ with small diameters, i.e.,
  \begin{align*}
    \dot{\cup}_{k = 1}^K U_r(x_k) &= M,\\
    \dot{\cup}_{\ell = 1}^L [(\ell-1)s, \ell s) &= [0,T),\\
    \diam \left(U_r(x_k)\right) &\leq r \quad \text{for all } k = 1,\dots, K.
  \end{align*}
  Then, simply plugging in the definitions and using the triangle inequality once yields
  \begin{align*}
    &\| f- \mathbb{E}^\pi \left(f\right)\|_{L^1((0,T)\times M)} \\
    &= \int_{(0,T)\times M } \left|f(x,t) - \sum_{\ell, k} \frac{1}{\mu(U_r(x_k)) s}
    \int_{E_{\ell,k}} f(y,\tau) \dmy \,d\tau \mathbbold{1}_{E_{\ell,k}}(t,x) \right|\\
    &= \sum_{\ell, k}  \frac{1}{\mu(U_r(x_k)) s}\int_{E_{\ell,k}} \left|\int_{E_{\ell,k}}f(x,t) - f(y,\tau) \dmy \, d\tau \right| 
    \dmx\,dt\\ 
    &\leq \sum_{\ell, k}  \frac{1}{\mu(U_r(x_k)) s}\int_{E_{\ell,k}}\int_{E_{\ell,k}}\left|f(x,t) - f(y,t) \right| + \left|f(y, \tau) - f(y,t) \right| \dmy \, d\tau  \dmx \, dt.
  \end{align*}
  The idea for $f = p(h_0) * \chi_h$ is to use the smoothness for the space difference while using \emph{Step 2} for the time difference.

  By the compactness of $(0,T) \times M$, the differentiability of $p(h_0)$ and the assumption on 
  $U_r(x_k)$ one gets for $x,y \in U_r(x_k)$ that
  \begin{equation*}
    \left|p(h_0) * \chi_h(x,t) - p(h_0) * \chi_h(y,t) \right| \leq \|\nabla p(h_0)\|_{C^0((0,T)\times M)} \dist(x,y)
    \leq C(h_0) r.
  \end{equation*}
  Using $r = \frac{\eps}{2C(h_0)T}$ and the fact that $E_{\ell,k}$ is a partition of $(0,T) \times M$ one therefore infers  
  \begin{align*}
    \sum_{\ell, k}  \frac{1}{\mu(U_r(x_k)) s}\int_{E_{\ell,k}}\int_{E_{\ell,k}}\left|f(x,t) - f(y,t) \right| \dmy \, d\tau  \dmx \, dt\\
    \leq  C(h_0) r \sum_{\ell,k} (dt \times \mu )(E_{\ell,k}) = C(h_0) r T = \frac{\eps}{2}.
  \end{align*}

  For the second summand, it is enough to prove that for $s = \left(\frac{\eps}{16C_0}\right)^2$ and $f = p(h_0) * \chi_h$ 
  it holds 
  \begin{equation*}
    \sum_{\ell = 1}^{L} \frac{1}{s} \int_{(\ell - 1)s}^{\ell s} \int_{(\ell - 1)s}^{\ell s} 
    \int_M |f(t) - f(\tau)|\, d\mu\, dt \, d\tau \leq \frac{\eps}{2}.
  \end{equation*}
  In order to apply the modulus of continuity of \emph{Step 1}, the variable $s' = t- \tau$ is 
  introduced, i.e.
  \begin{equation*}
    \sum_{\ell = 1}^{L} \frac{1}{s} \int_{(\ell - 1)s}^{\ell s} \int_{(\ell - 1)s}^{\ell s} 
    |f(t) - f(\tau)| \, dt \, d\tau = \sum_{\ell = 1}^{L} \frac{1}{s} \int_{(\ell - 1)s}^{\ell s} \int_{(\ell - 1)s-\tau}^{\ell s - \tau} 
    |f(s' + \tau) - f(\tau)| \, ds' \, d\tau.
  \end{equation*}
  Now, with the help of Fubini's theorem and the previous transformation one deduces
  \begin{align*}
    \sum_{\ell = 1}^{L} &\frac{1}{s} \int_{(\ell - 1)s}^{\ell s} \int_{(\ell - 1)s}^{\ell s} 
    |f(t) - f(\tau)| \, dt \, d\tau \\
    &=  \sum_{\ell = 1}^{L} \frac{1}{s} \int_{-s}^{s}
     \int_{(\ell - 1)s + (-s' \vee 0)}^{\ell s + (-s' \wedge 0)} |f(s' + \tau) - f(\tau)|\, d\tau \, ds'\\
     &=  \sum_{\ell = 1}^{L} \frac{1}{s} \int_{0}^{s}
    \int_{(\ell - 1)s}^{\ell s -s'} |f(s' + \tau) - f(\tau)|\, d\tau \, ds'
    + \sum_{\ell = 1}^{L} \frac{1}{s} \int_{0}^{s}
    \int_{(\ell - 1)s+ s'}^{\ell s } |f(\tau) - f(\tau - s')|\, d\tau \, ds'\\
    &\leq 2 \frac{1}{s} \int_{0}^{s}\int_{s'}^{T} |f(\tau) - f(\tau -s')|\, d\tau \, ds'.
  \end{align*}
  By \emph{Step 2} together with the Jensen inequality it holds 
  $\int_{s'}^{T} \int_M |f(\tau) - f(\tau -s')|\, d\mu\, d\tau \leq 4 C_0 \sqrt{s'}$ and hence one concludes with $s' \leq s$
  \begin{equation*}
    \sum_{\ell = 1}^{L} \frac{1}{s} \int_{(\ell - 1)s}^{\ell s} \int_{(\ell - 1)s}^{\ell s} \int_M
    |f(t) - f(\tau)|\,d\mu \, dt \, d\tau \leq 8 C_0 \sqrt{s} = \frac{\eps}{2}.
  \end{equation*}

  The lower bound \eqref{eq:lowersemicontinuity_energy} was proven in the $\Gamma$-convergence of the thresholding energy by Laux and Lelmi in \cite{laux2021large}. Later in the proof of Lemma \ref{lem:conv_densities}, we give a simpler, localized proof for the bound \eqref{eq:lowersemicontinuity_energy}.
\end{proof}
\begin{proof}[Proof of Lemma \ref{lem:conv_densities}]
  Spelling out the pushforward measures it boils down to prove for any function $f\in C^1({TM^2}\times \R), (x,y, z_x, z_y,t)\in M \times M \times T_x M \times T_y M \times \R
  \mapsto f(x,y,z_x, z_y,t)$
  with polynomial growth in $z_x$ and $z_y$ the limits
  \begin{align}
    &\lim_{h \downarrow 0} \int_0^T \frac{1}{\sqrt{h}}\int_M \int_{B_{\inj}(x)} f \Big(x,y, \frac{\exp^{-1}_x(y)}{\sqrt{h}},  \frac{\exp^{-1}_y(x)}{\sqrt{h}},t\Big) p(h,x,y) u_{h,i}(x,t) 
    (1-u_{h,i})(y,t) \nonumber\\
    &\hspace{370pt} \times \dmy \dmx \,dt \nonumber\\
    &= \int_0^T \int_M \int_{T_x M}f(x,x,z,-z,t)  G_1(z) \left( \langle \mathbf{n_i}(x), z \rangle_x \right)_+
    \,dz\, \rho(x) |\nabla \chi_i|(x,t) \,dt \label{eq:energy_conv_1}
  \end{align}
  for any test function in $C^1(TM^2 \times (0,T))$.

  \emph{Step 1:} By the ``Cut and Replace''-Lemma \ref{lem:cut_and_replace}\ref{part:cut_and_replace}) one can replace $p(h,x,y)$ by $G_h(x,y)v_0(x,y)$ such that the limit on the left-hand side of 
  equation \eqref{eq:energy_conv_1} is equal to
  \begin{align}
    &\lim_{h \downarrow 0} \int_0^T \frac{1}{\sqrt{h}}\int_M \int_{B_{\inj}(x)} f \Big(x,y, \frac{\exp^{-1}_x(y)}{\sqrt{h}},  \frac{\exp^{-1}_y(x)}{\sqrt{h}},t\Big)
    G_h(x,y) v_0(x,y) u_h(x,t) (1-u_h)(y,t) \label{eq:energy_conv_gauss} \\
    & \hspace{370pt} \times\dmy \dmx \,dt. \nonumber
  \end{align}
  
  \emph{Step 2}: For showing \eqref{eq:energy_conv_gauss} it will be convenient to show 
  the following equi-integrability
  \begin{align*}
    \int_M \int_{B_{\frac{\inj}{\sqrt{h}}}(0)} e^{\frac{3|z|^2}{8}}e_h(z,t,x) \,dz \dmx \leq C E_h(u_h(t)) < \infty \quad \text{for all } t \in [0, \infty) 
  \end{align*}
  with a constant $C > 0$ and using the notation introduced in Chapter \ref{sec:notation} for
  \begin{align*}
    e_h(z,t,x) &:= G_h(z) \frac{1}{\sqrt{h}} (1-u_h)(x,t)u_h(x_{\sqrt{h}z},t) \eta(x_{\sqrt{h}z})\\
    \eta(y) &:= \sqrt{\det (g)(y)}\rho(y)v_0(x,y).
  \end{align*}
  Indeed, we first observe that $G_4(z) = \frac{1}{2^d}e^{3|z|^2/8}G_1(z)$ which 
  yields together with the Gaussian bound \eqref{eq:gaussian_one}, in form of 
  $G_{4h}(z) \leq \frac{1}{C_1} p(C_2 4h, x,x_z)$, that
  \begin{align*}
    \int_M &\int_{B_{\frac{\inj}{\sqrt{h}}}(0)} e^{\frac{3|z|^2}{8}}e_h(z,t,x) \,dz \dmx\\
   &\leq \frac{2^{d+1}}{C_1}\int_M \int_{B_{\inj}(0)}  p(C_2 4h, x,
    x_z) \frac{1}{\sqrt{h}} (1-u_h)(x,t)u_h(x_{z},t) \eta(x_{z}) \,dz \dmx.
  \end{align*}
By changing the polar coordinates around $x$ back to coordinates on $M$, the 
right-hand side is bounded by $ \frac{2^{d+1}}{C_1} E_{ C_2 4h}(u_h(t))$. The claim follows
now with the monotonicity \eqref{eq:mon_const} and $C = \frac{2^{d+2}\sqrt{C_2}}{C_1}$.

\emph{Step 3}: It is sufficient for the limit \eqref{eq:energy_conv_1} to prove the local lower bound
\begin{align}\label{eq:local_bound}
  &\liminf_{h \downarrow 0} \int_0^T \int_M \int_{B_{\inj}(x)} f \Big(x,y, \frac{\exp^{-1}_x(y)}{\sqrt{h}},  \frac{\exp^{-1}_y(x)}{\sqrt{h}},t\Big)
  \frac{G_h(x,y) v_0(x,y) u_{h,i}(x,t) (1-u_{h,i})(y,t)}{\sqrt{h}} \\
  &\hspace{370pt}\times \dmy \dmx \,dt \nonumber \\
 &\geq \int_0^T \int_M \int_{T_x M}f(x,x,z,-z,t)\rho(x) G_1(z)(\langle \mathbf{n}_i(x), z\rangle_x)_+ \,dz \,|\nabla \chi_i|(x,t)\, dt. \nonumber
\end{align}

Indeed, the energy convergence assumption~\eqref{ass:energy_conv}, 
with $\chi_h$ replaced by $u_h$, together with applying Lemma \ref{lem:cut_and_replace} to the far field term and using (see \eqref{eq:derivation_c0} for more details)
\begin{equation*}
\int_{T_x M}G_1(z) (\langle \mathbf{n}_i(x), z\rangle_x)_+ \,dz = c_0
\end{equation*}
 for all $x \in M, i = 1, \dots, P$, yields the global upper bound
\begin{align}\label{eq:global_bound}
  \limsup_{h \downarrow 0} \frac{1}{\sqrt{h}} \sum_{i = 1}^P \int_0^T \int_M \int_{B_{\inj}(x)} G_h(x,y)v_0(x,y) u_{h,i}(x,t) (1-u_{h,i})(y,t) \dmy \dmx \,dt \nonumber\\
  \leq \int_0^T \int_{T_x M} \rho(x) G_1(z)(\langle \mathbf{n}_i(x), z\rangle_x)_+ \,|\nabla \chi_i|(x,t) \,dz \, dt.
\end{align}
Combining the global upper bound \eqref{eq:global_bound} together with the local lower bound \eqref{eq:local_bound}
(for every $i = 1, \dots, P$) one concludes that for every index $i$, it has to hold equality globally
\begin{align}\label{eq:global_bound_i}
  \lim_{h \downarrow 0} \frac{1}{\sqrt{h}} \int_0^T \int_M \int_{B_{\inj}(x)} G_h(x,y)v_0(x,y) u_{h,i}(x,t) (1-u_{h,i})(y,t) \dmy \dmx \,dt \nonumber\\
  = \int_0^T \int_{T_x M} \rho(x) G_1(z)(\langle \mathbf{n}_i(x), z\rangle_x)_+ \,|\nabla \chi_i|(x,t) \,dz \, dt.
\end{align}
Now, by splitting the test function $f = 1 - (1 - f)$ and applying \eqref{eq:global_bound_i} to 
$1$ and \eqref{eq:local_bound} to $(1-f)$ one infers that in \eqref{eq:local_bound} also equality has to hold.

After changing to normal coordinates on the left-hand side of \eqref{eq:energy_conv_gauss}, it 
follows by \emph{Step 2} that it is enough to assume $f$ is bounded. Furthermore, 
by rescaling and splitting into positive and negative part one assumes $f$ is $[0,1]$-valued.
For the proof of the local lower bound the argument will be provided pointwise in time. Hence, fix a $t \in [0,T]$ and drop the $t$-variable in the notation for better readability. 

\emph{Step 4:} The left-hand side of the local lower bound \eqref{eq:local_bound} can be bounded from below for any $\xi \leq f$ and $\Theta(a,b) := \xi(a,b, PT_{x \rightarrow a}z, -PT_{x \rightarrow b}z)$ by 
\begin{align}
  \liminf_{h \downarrow 0}  \int_M \int_{B_{\inj}(x)} f(x,y, \exp^{-1}_x(y), \exp^{-1}_y(x)) 
  \frac{G_h(x,y) u_{h,i}(x) (1-u_{h,i})(y)}{\sqrt{h}} \dmy \dmx& \nonumber\\
  \geq - \int_M  \int_{T_x M} D(\Theta v_0 \rho^2)(x)[z] G_1(z)\,dz \,\chi_i(x)\,d\Vol(x) &. \label{eq:lower_bound_differential}
\end{align}
 
The integral on the left-hand side of \eqref{eq:lower_bound_differential} is bounded from below for any $\xi \leq f$ by 
\begin{equation*}
\frac{1}{\sqrt{h}}\int_M \int_{B_{\inj}(x)} \xi\Big(x,y, \frac{\exp^{-1}_x(y)}{\sqrt{h}},  \frac{\exp^{-1}_y(x)}{\sqrt{h}}\Big)
    G_h(x,y) v_0(x,y) u_h(x) (1-u_h)(y) \dmy \dmx
  \end{equation*}
which is, using $u(1-u') \geq u - u'$ for $u,u' \in [0,1]$, bounded from below by 
\begin{align}
  &\int_M \int_{B_{\inj}(x)}  \xi\Big(x,y, \frac{\exp^{-1}_x(y)}{\sqrt{h}},  \frac{\exp^{-1}_y(x)}{\sqrt{h}}\Big)
  G_h(x,y)v_0(x,y)\frac{(u_{h,i}(x) - u_{h,i}(y))}{\sqrt{h}} \dmy \dmx \nonumber\\
  &=\int_M \int_{B_{\inj}(x)}  u_{h,i}(x) 
  G_h(x,y)v_0(x,y)\frac{\xi \big(x,y, \frac{\exp^{-1}_x(y)}{\sqrt{h}},  \frac{\exp^{-1}_y(x)}{\sqrt{h}}\big) - \xi\big(y,x, \frac{\exp^{-1}_y(x)}{\sqrt{h}},  \frac{\exp^{-1}_x(y)}{\sqrt{h}}\big)}{\sqrt{h}}\nonumber\\
  &\hspace{380pt} \times \dmy \dmx \nonumber\\
    &=\int_M \int_{B_{\frac{\inj}{\sqrt{h}}}(0) \subset T_x M} D_h'\xi(x,z) 
  G_1(z)v_0(x,x_{\sqrt{h}z})\eta(x_{\sqrt{h}z})  \,dz\, u_{h,i}(x) \dmx    \label{eq:wrong_difference}
\end{align}
Here one first swapped the names of the $x,y$  variable (using the symmetry of $G_h$ and $v_0$ in $(x,y)$) to put the difference quotient onto the test function 
$\xi$. Second, one applied normal coordinates around $x$ and introduced
  \begin{align*}
  D_h'\xi(x,z) &:= \frac{\xi(x,x_{\sqrt{h}z},z,-PT_{x\rightarrow x_{\sqrt{h}z}}z) 
  - \xi(x_{\sqrt{h}z},x ,-PT_{x\rightarrow x_{\sqrt{h}z}}z, z)}{\sqrt{h}},\\
  \eta(y) &:= \sqrt{\det (g)(y)}\rho(y)v_0(x,y).
\end{align*}
Notice that if the sign in the $z$-variables of $f$ agreed, this would be a difference quotient  in the $x$ variables. 
Hence, use the symmetry of $G_1$ and $B_{\frac{\inj}{\sqrt{h}}}(0)$ to change the coordinates from $z$ to $-z$
in the subtrahend. By that the lower bound \eqref{eq:wrong_difference} is equal to
\begin{align*}
    \int_M \int_{B_{\frac{\inj}{\sqrt{h}}}(0) \subset T_x M} D_h(\xi \eta)(x,z) 
    G_1(z) \,dz\, u_h(x) \dmx    
\end{align*}
with 
\begin{equation*}
  D_h(\xi \eta)(x,z) := \frac{\xi(x,x_{\sqrt{h}z},z,-PT_{x\rightarrow x_{\sqrt{h}z}}z) \eta(x_{\sqrt{h}z})
  - \xi(x_{-\sqrt{h}z},x ,PT_{x\rightarrow x_{-\sqrt{h}z}}z, -z) \eta(x_{-\sqrt{h}z})}{\sqrt{h}}.
\end{equation*}
We will show now that 
\begin{equation}\label{eq:conv_difference_quotient}
  |D_h(\xi \eta)(x,z) + D (\Theta v_0 p^2)(x)[z]| \lesssim \sqrt{h} |z|
\end{equation}
such that $D_h(\xi \eta)(x,z) G_1(z)$ converges uniformly to $-D (\Theta v_0 p^2)(x)[z] G_1(z)$ and $u_h \mathbbold{1}_{B_{\frac{\inj}{\sqrt{h}}}(0)}$ to $\chi$ in $L^1(M)$ which yields the claimed \eqref{eq:lower_bound_differential}.
To pass to the limit one splits similarly to the product rule
\begin{align*}
  D_h(\xi \eta)(x,z) = D_h^a (\xi)(x,z) \eta(x_{\sqrt{h}z}) &+ D_h^b (\xi)(x,z) \eta(x_{\sqrt{h}z}) \\
  &+ D_h (\eta)(x,z) \xi(x_{-\sqrt{h}z},x ,PT_{x\rightarrow x_{-\sqrt{h}z}}z, -z)
\end{align*}
 with the simpler quotients
\begin{align*} 
  D_h^a\xi(x,z) &:= \frac{\xi(x,x,z,-z) 
  - \xi(x_{-\sqrt{h}z},x ,PT_{x\rightarrow x_{-\sqrt{h}z}}z, -z)}{\sqrt{h}},\\
  D_h^b \xi(x,z) &:= \frac{\xi(x,x_{\sqrt{h}z},z,-PT_{x\rightarrow x_{\sqrt{h}z}}z) 
  - \xi(x,x,z,-z)}{\sqrt{h}},\\
  D_h \eta(x,z) &:= 
  \frac{\eta(x_{\sqrt{h}z}) - \eta(x_{-\sqrt{h}z})}{\sqrt{h}}.
\end{align*}
By smoothness of the involved terms the uniform limits
\begin{align*}
\xi(x_{-\sqrt{h}z},x ,PT_{x\rightarrow x_{-\sqrt{h}z}}z, -z)
&\xrightarrow{h \rightarrow 0} \xi(x,x,z,-z),\\
\eta(x_{\sqrt{h}z}) &\xrightarrow{h \rightarrow 0} \eta(x) = 1
\end{align*}
hold. Here, the definition of $\eta$, together with $v_0(x,x) = \frac{1}{\rho(x)}$ and $\det(g)(x) = \det(Id) = 1$ was used.
 The identity $\det(g)(x) = \det(Id)$ is valid, as the previous calculation were operated in normal coordinates.
Next, for fixed $x \in M, z \in T_x M$, the auxiliary function $\Theta(a,b) := \xi(a,b, PT_{x \rightarrow a}z, -PT_{x \rightarrow b}z)$
is introduced to identify the uniform limits
\begin{align*}
  \lim_{h \rightarrow 0} D_h^a \xi(x,z) &= \frac{d}{dt}\Big|_{t = 0} \Theta(-x_{tz}, x)
  = -D_a \Theta(x,x)\left[\frac{d}{dt}\Big|_{t=0} x_{tz}\right]= -D_a \Theta(x,x)[z],\\
  \lim_{h \rightarrow 0} D_h^b \xi(x,z) &= -\frac{d}{dt}\Big|_{t = 0} \Theta(x,x_{tz})
  = -D_b \Theta(x,x)[z].
\end{align*}
Here $D_a$ and $D_b$ denote the differential with respect to the $a$ and $b$ variable, respectively, that is $D_a \Theta (x,x) = D \Theta(\cdot, x) (x) $.
Similarly, the limit of the difference on $\eta$ is given by
\begin{align}
  \lim_{h \rightarrow 0} D_h \eta(x,z) &= -2\frac{d}{dt}\Big|_{t = 0} \eta(x_{tz})
  =-2D \eta(x)[z] = -2 \langle \nabla \eta (x), z \rangle_x.
\end{align}
By the symmetry $v_0(x,y)=v_0(y,x)$ it holds $\nabla v_0(\cdot,x) (x) = \nabla v_0(x, \cdot)(x)$
and hence $2 \nabla v_0(x, \cdot) (x) = \nabla v_0(\cdot, \cdot)(x)$. Together with the definition
of $\eta$ and $\nabla \sqrt{\det(g)}(x) = 0$ this leads to
\begin{equation*}
  2\nabla \eta(x) = 2v_0(x,x) \nabla \rho(x)  + \rho(x) \nabla v_0(\cdot,\cdot)(x) = \frac{\nabla (\rho^2 v_0)(x)}{\rho(x)}.
\end{equation*} 
Putting all limits together yields exactly \eqref{eq:conv_difference_quotient} where the convergence rate is given by the rate at which a difference quotient converges to the respective derivative.

\emph{Step 5:} Conclusion.

The proof will be finished after showing 
\begin{equation} \label{eq:identify_grad_div}
  \int_{T_x M}  D\zeta(x)[z]G_1(z)\,dz = \div \left( \int_{T_{\cdot} M} \zeta(\cdot) z G_1(z)\,dz\right)(x)
\end{equation}
for fixed $x \in M$ and $\zeta \in C^1(M)$. Indeed, using identity
\eqref{eq:identify_grad_div} for $\zeta = \Theta v_0 \rho^2$ in the lower bound \eqref{eq:lower_bound_differential} together with
$v_0(x,x) = \frac{1}{\rho(x)}$ and integration by parts one obtains
\begin{align*}
  -\int_M  &\int_{T_x M} D(\Theta v_0 \rho^2)(x)[z] G_1(z)\,dz \,\chi_i(x)\,d\Vol \\
  &=  \int_M  \int_{T_x M} (\Theta v_0 \rho^2)(x) z G_1(z)\,dz \, \cdot \nabla \chi_i(x) \\
  &= \int_M  \int_{T_x M} \xi(x,x,z,-z) \rho(x) G_1(z) \langle \mathbf{n}_i(x), z \rangle_x \,dz \, |\nabla \chi_i|(x).
\end{align*}
Taking the supremum over all $\xi \leq f$ yields \eqref{eq:local_bound} by Fatou's lemma.

So, it remains to prove \eqref{eq:identify_grad_div}. By using $\div = \tr \nabla$ and writing the Levi--Civita connection as difference quotient by
\cite[Corollary 4.35]{lee2018introduction} one obtains for any $\phi \in \Gamma(TM)$ the divergence 
in normal coordinates as
\begin{align*}
  \div (\phi)(x) &= \sum_{j = 1}^d \langle \nabla_{\partial_j} \phi (x), \partial_j \rangle_x\\
  &= \lim_{t \rightarrow 0} \sum_{j = 1}^d \frac{\langle PT_{x_{t\partial_j} \rightarrow x}\phi(x_{t\partial_j}), \partial_j \rangle_x
  - \langle \phi(x), \partial_j \rangle_x}{t}.
\end{align*}
For $\phi(x) := \int_{T_{x} M} \zeta(x) z G_1(z)\,dz$, one uses that parallel 
transport is a linear isometry and $G_1$ is invariant under linear isometries such that
\begin{align*}
  PT_{x_{t\partial_j} \rightarrow x}\phi(x_{t\partial_j}) 
  &= \int_{T_{x_{t\partial_j}} M} \zeta(x_{t\partial_j}) PT_{x_{t\partial_j} \rightarrow x}z G_1(PT_{x_{t\partial_j} \rightarrow x}z)\,dz\\
  &= \int_{T_{x} M} \zeta(x_{t\partial_j}) z G_1(z)\,dz.
\end{align*}
Combining the two previous identities one concludes
\begin{align*}
  \div \left( \int_{T_{\cdot} M} \zeta(\cdot) z G_1(z)\,dz\right)(x) 
  &= \lim_{t \rightarrow 0} \sum_{j = 1}^d \int_{T_{x} M} \frac{\zeta(x_{t\partial_j})- \zeta(x)}{t} 
  \langle z, \partial_j \rangle_x G_1(z) \, dz\\
  &=\sum_{j = 1}^d \int_{T_{x} M} D \zeta (x)[\partial_j]
  \langle z, \partial_j \rangle_x G_1(z) \, dz\\  
  &=\int_{T_{x} M} D \zeta (x)[z] G_1(z) \, dz
\end{align*}
where we used in the last equality the linearity of the differential together with $\sum_{j = 1}^d \partial_j \langle z, \partial_j \rangle = z$.
\end{proof}

\begin{proof}[Proof of Proposition \ref{prop:metricslope}]
  \emph{Step 1}: For any $\xi \in \Gamma(TM), u: M \rightarrow \R^P$ it holds 
  \begin{equation}
    \frac{1}{2}|\partial E|^2(u) \geq \delta E(u, \xi)- h \lim_{s\rightarrow 0} 
    \frac{\frac{1}{2h}d^2(u,u_s)}{s^2}
  \end{equation}
  where $u_s$ is the variation of $u$ along $\xi$ defined by 
  $\partial_s u_s + \langle \nabla u_s, \xi \rangle = 0$ and $\delta E(u,\xi) := \frac{d}{ds} \big|_{s = 0} E(u_s)$ is the first variation of the energy.

  To prove \emph{Step 1} one utilizes $\frac{1}{2} (\frac{a}{b})^2 \geq a- \frac{1}{2}b^2$ together
  with the definition of the metric slope $\partial E$
  \begin{equation*}
    \frac{1}{2}|\partial E|^2(u)\geq \lim_{s \rightarrow 0}\left(
      \frac{1}{s} (E(u) - E(u_s))_+ 
    \right) - \frac{\frac{2h}{2}\frac{1}{2h}d^2(u, u_s)}{s^2}
    \geq -\frac{d}{ds}\Big|_{s = 0} E(u_s) - \lim_{s \rightarrow 0}
    h\frac{\frac{1}{2h}d^2(u, u_s)}{s^2}.
  \end{equation*}
   In the end \emph{Step 1} will be used integrated in time for test functions $\xi \in C_0^\infty((0,T), \Gamma(TM))$. In regard of proving \eqref{eq:prop_metric} for $h$ small enough the remainder integral from $\lfloor \frac{T}{h} \rfloor h$ to $T$ will always be zero due to the compact support of $\xi$. Hence, we will drop this remainder term w.l.o.g.\ in the rest of the proof.

  \emph{Step 2}: There exists a $\Lambda \in \R^P$ such that for any $\xi \in C^\infty_0((0,T),\Gamma(TM))$ it holds
  \begin{align}
    \sum_{n = 1}^{\lfloor \frac{T}{h} \rfloor}\int_{(n-1)h}^{nh} &\delta E_h^n(u_h, \xi) \,dt  \\ \xrightarrow{h \downarrow 0} 
    \int_0^T &\delta E(\chi, \xi, \Lambda) \,dt
    := c_0 \sum_{i = 1}^P \int_0^T \int_M \div(\xi) \rho + \langle \xi, \nabla \rho \rangle-\langle \mathbf{n}_i, \nabla_{\mathbf{n}_i} \xi \rangle \rho  -  \rho \langle\xi,\mathbf{n}_i
    \rangle \Lambda_i\,  |\nabla \chi_i| \,dt .\nonumber
  \end{align}

  For simpler notation one introduces 
  \begin{align*}
    E_{h,i}(u) &:= \frac{1}{\sqrt{h}}\int_M (1- u_{i}) p(h)*u_{i} \, d\mu,\\
    E_i(\chi)     &:= c_0 \int_M \rho |\nabla \chi_i|
  \end{align*}
  such that the energies can be written as 
  \begin{align*}
    E^n_h(u) &= \sum_{i = 1}^P E_{h,i}(u) - \Lambda^n \cdot \int_M u \,d\mu,\\
    E(\chi,\Lambda) := c_0\sum_{i = 1}^P \int_M \rho |\nabla \chi_i| - c_0 \Lambda \cdot \int_M \chi \, d\mu 
    &= \sum_{i = 1}^P E_i(\chi) - c_0\Lambda \cdot \int_M \chi \, d\mu.
  \end{align*}
  Hence, it is sufficient to prove that for all $t \in (0,T)$
  \begin{equation}\label{eq:conv_first_var}
    \sum_{i = 1}^P\delta E_{h,i}(u_h, \xi) \rightarrow \sum_{i = 1}^P\delta E_i(\chi, \xi) = c_0 \sum_{i = 1}^P\int_M \div(\xi) \rho + \langle \xi, \nabla \rho \rangle-\langle \mathbf{n}_i, \nabla_{\mathbf{n}_i} \xi \rangle \rho\, |\nabla \chi_i|
  \end{equation}
  and there exists $\Lambda \in  L^2((0,T),\R^P)$ such that 
  \begin{align}\label{eq:conv_lagrange}
    \delta\left(\sum_{n = 1}^{\lfloor\frac{T}{h}\rfloor}\int_{(n-1)h}^{nh} \Lambda^n \cdot \int_M \bullet \, d\mu \, dt \right)(u_h, \xi) \xrightarrow{h  \downarrow 0} c_0 \sum_{i = 1}^P \int_0^T \Lambda \int_M \langle \xi, \mathbf{n}_i \rangle \, |\nabla \chi_i|\,dt.
  \end{align}

  \emph{Step 2.1}: Proof of convergence \eqref{eq:conv_lagrange}.

  First, introduce $\Lambda_h$ as the piecewise constant interpolation of $\Lambda^n$
   analogous to the interpolation $\chi_h$ in \eqref{eq:piecewise_interpolation}. Then the first variation
   is calculated to be
   \begin{align*}
    \delta\Bigg(\sum_{n = 1}^{\lfloor\frac{T}{h}\rfloor}\int_{(n-1)h}^{nh} \Lambda^n \cdot & \int_M \bullet \, d\mu \, dt \Bigg)(u_h, \xi)
    \\
    &= \frac{d}{ds}\Big|_{s = 0} \int_0^T \Lambda_h \cdot \int_M u_{h,s} \, d\mu \, dt= \int_0^T \Lambda_h \cdot \int_M \div(\xi) u_{h} \, d\mu \, dt.
   \end{align*}
   As already mentioned before in \eqref{eq:l2_lagrange_multiplier}, by \cite[Proposition 2]{kramer2024efficient} the time $L^2$-norm of $\Lambda_h$ is uniformly bounded.
   Hence, there exists a subsequence, again denoted by $h$, and a $\Lambda \in   L^2((0,T),\R^P)$ such 
   that 
   \begin{equation*}
    \Lambda_h \xrightharpoonup{ L^2((0,T),\R^P)} c_0 \Lambda.
   \end{equation*}
   The convergence of $\langle \Lambda_h, u_h\rangle_{ L^2((0,T),\R^P)} \rightarrow \langle c_0 \Lambda, \chi \rangle_{ L^2((0,T),\R^P)}$ for $h \downarrow 0$ follows by the strong convergence $u_h \xrightarrow{L^2(M\times (0,T), \R^d)} \chi$. In conclusion 
   the limit of the first variation is given by the definition of $\nabla \chi_i$ by
   \begin{align*}
    \delta\left(\sum_{n = 1}^{\frac{T}{h}}\int_{(n-1)h}^{nh} \Lambda^n \cdot \int_M \bullet \, d\mu \, dt\right)(u_h, \xi)
    \rightarrow &c_0\int_0^T \Lambda \cdot \int_M \div(\xi) \chi \, d\mu \, dt\\
    & = c_0 \sum_{i = 1}^P\int_0^T \Lambda_i  \int_M \langle \xi, \mathbf{n}_i \rangle  \, |\nabla \chi_i| \, dt
   \end{align*}

   \emph{Step 2.2}: Calculation and simplification of $\delta E_{h,i}(u_h, \xi)$ to
   \begin{align}
    &\lim_{h \downarrow 0} \delta E_{h,i}(u_h, \xi) = \lim_{h \downarrow 0} I + II + III, \nonumber\\
    &I := \frac{1}{\sqrt{h}} \int_M \int_{B_{\inj}(x)}  \left(\langle \xi(x) , \nabla_x G_h(x,y) \rangle_x
    + \langle \xi(y) , \nabla_y G_h(x,y)\rangle_y \right)v_0(x,y)\\
    &\hspace{120pt}\cdot  (1-u_{h,i})(x)u_{h,i}(y) \dmx\dmy \nonumber\\
    &II := \frac{1}{\sqrt{h}} \int_M \int_{B_{\inj}(x)}  \left(\langle \xi(x) , \nabla_x v_0(x,y) \rangle_x
    + \langle \xi(y) , \nabla_y v_0(x,y)\rangle_y \right)G_h(x,y)\\
    & \hspace{120pt}\cdot (1-u_{h,i})(x)u_{h,i}(y) \dmx \dmy\nonumber\\
    &III := \frac{1}{\sqrt{h}} \int_M \int_{B_{\inj}(x)} \left(\divrho(\xi)(y) + \divrho(\xi)(x)\right)G_h(x,y)v_0(x,y)\\
    &\hspace{120pt}\cdot (1-u_{h,i}(x))u_{h,i}(y)\dmx\dmy.\nonumber
   \end{align}

   By definition of the first variation together with $- \langle \xi, \nabla u_h \rangle = 
   - \divrho(u_h \xi) + \divrho(\xi) u_h$ it follows
   \begin{align*}
    &\delta E_{h,i}(u_h, \xi) \\
    =& \frac{1}{\sqrt{h}} \int_M \langle -\xi, \nabla (1- u_{h,i}) \rangle p(h)*
    u_{h,i}+ (1-u_{h,i}) p(h)*\langle -\xi, \nabla u_{h,i} \rangle\\
    =&\frac{1}{\sqrt{h}} \int_M \int_M  \Big(\langle \xi(x) , \nabla_x p(h,x,y) \rangle_x + \langle \xi(y) , \nabla_y p(h,x,y) \rangle_y 
    \\
     &\hspace{80pt}+ \divrho(\xi)(x)  p(h,x,y) + \divrho(\xi)(y)  p(h,x,y) \Big)(1-u_{h,i})(y)u_{h,i}(x)\dmx\dmy
   \end{align*}

   The next step is to use Lemma \ref{lem:cut_and_replace}\ref{part:cut_and_replace}) to cut off the far field term $\{(x,y) \in M^2: \dist(x,y) \geq \inj\}$ and 
   replace $p(h)$ by $G_h v_0$ such that
   \begin{align*}
    &\lim_{h \downarrow 0} \delta E_{h,i}(u_h, \xi) \\
    &=\lim_{h \downarrow 0}\frac{1}{\sqrt{h}} \int_M \int_{B_{\inj}(x)} \Big( \langle \xi(x) , \nabla_x \big(G_h(x,y)v_0(x,y)\big) \rangle_x
    + \langle \xi(y) , \nabla_y \big(G_h(x,y)v_0(x,y)\big)\rangle_y \\
     & \hspace{120pt}+ \left(\divrho(\xi)(y) + \divrho(\xi)(x)\right)G_h(x,y)v_0(x,y)\Big)\\
     & \hspace{120pt}\cdot(1-u_{h,i}(y))u_{h,i}(x)\dmx\dmy,
   \end{align*}
   which is the claimed identity after applying the product rule. 

   \emph{Step 2.3} Calculation of 
   \begin{equation*}
    \lim_{h \downarrow 0} III = c_0 \int_M 2(\div(\xi) \rho + \langle \xi, \nabla \rho \rangle) |\nabla \chi_i|.
   \end{equation*}

   As $\divrho (\xi) \rho = \div (\rho\xi) = \div(\xi) \rho + \langle \xi, \nabla \rho \rangle$ it is sufficient to prove
   \begin{equation*}
     \lim_{h \downarrow 0} III = c_0 \int_M 2\divrho (\xi) \rho|\nabla \chi_i|.
   \end{equation*}
   After applying Lemma \ref{lem:conv_densities} to $III$ with the test function $f(x,y,z_x, z_y) : = \divrho(\xi)(x) +\divrho(\xi)(y)$
   the limit reads
   \begin{equation*}
    \lim_{h \downarrow 0} III = \int_M 2\divrho(\xi)(x) \rho(x) 
    \int_{T_x M} G_1(z) (\langle \mathbf{n}_i(x), z \rangle_x )_+ \, dz \,|\nabla \chi_i|.
   \end{equation*}
   Now fix an orthonormal basis $\{\partial_i\}_{i = 1}^d$ of $T_x M$ and write for any tangent vector $z \in T_x M$, using Einstein summation convention, $z = z^i \partial_i$ and $\tilde{z} = \{z_i\}_{i = 1}^d \in \R^d$. Hence, $|z|_x = \langle z^i \partial_i ,z^j \partial_j \rangle_x = |\tilde z|_2$, which implies
   $G_h(z^i \partial_i)= G^{\R^d}_h(\tilde{z})$ with $G^{\R^d}_h$ denoting the Euclidean heat kernel. The claim is now finished using the rotational symmetry of $G^{\R^d}_1$ and $G_1^{\R^d}(z) = G_1^{\R^1}(z_1)G_1^{\R^{d-1}}(z')$ to compute for any $\mathbf{n} \in T_x M$ with $|\mathbf{n}|= 1$ (so in particular for $\mathbf{n} = \mathbf{n}_i(x)$) that 
   \begin{align}\label{eq:derivation_c0}
    \int_{T_x M} G_1(z) (\langle \mathbf{n}, z \rangle_x )_+ \, dz &= 
    - 2 \int_{0 < \tilde z \cdot \tilde{\mathbf{n}}} \nabla G^{\R^d}_1(\tilde{z}) \cdot  \tilde{\mathbf{n}} \,d\tilde{z}\\
    &= 2 \int_{ 0 = \tilde z \cdot \tilde{\mathbf{n}}} G^{\R^d}_1(\tilde{z}) \tilde{\mathbf{n}}\cdot  \tilde{\mathbf{n}}\,d\tilde{z}\nonumber\\
    &= 2 \int_{z_1 = 0} G^{\R^d}_1(\tilde{z})  \, d\tilde{z} \nonumber\\
    &= 2G_1^{\R^1}(0) \nonumber\\
    &= c_0. \nonumber
   \end{align}

   \emph{Step 2.4}: Calculation of 
   \begin{equation*}
    \lim_{h \downarrow 0} II = - c_0 \int_M \langle \xi, \nabla \rho \rangle |\nabla \chi_i|.
   \end{equation*}

   Again, apply Lemma \ref{lem:conv_densities}, this time with $f(x,y,z_x,z_y) : = \frac{ \langle \xi(x) , \nabla_x v_0(x,y) \rangle_x + \langle \xi(y) , \nabla_y v_0(x,y)\rangle_y}{v_0(x,y)}$, to obtain
   \begin{equation*}
    \lim_{h \downarrow 0} II = \int_M \langle \xi(x) , \nabla_x v_0(x,x) + \nabla_y v_0(x,x)  \rangle_x \frac{\rho(x)}{v_0(x,x)}
    \int_{T_x M} G_1(z) (\langle \mathbf{n}_i(x), z \rangle_x )_+ \, dz \,|\nabla \chi_i|.
   \end{equation*}
   The claim is now obtained by using $c_0 = \int_{T_x M} G_1(z) (\langle \mathbf{n}(x), z \rangle_x )_+ \, dz$ as in \emph{Step 2.3} and $v_0(x,x) = \frac{1}{\rho(x)}$ to recognize
   \begin{align*}
    (\nabla_x v_0(x,x) + \nabla_y v_0(x,x)) \frac{\rho(x)}{v_0(x,x)} = (\nabla v_0(\cdot, \cdot))(x) \rho(x)^2 = \left(\nabla \frac{1}{\rho}\right)(x) \rho(x)^2 = -\nabla \rho(x).
   \end{align*}

   \emph{Step 2.5}: Calculation of 
   \begin{equation}\label{eq:limit_I}
    \lim_{h \downarrow 0} I = - c_0 \int_M (\langle \mathbf{n}_i, \nabla_{\mathbf{n}_i} \xi \rangle + \div \xi ) \rho|\nabla \chi_i|.
   \end{equation}

   The following Taylor expansion will be necessary: For any vector field $\xi \in \Gamma(TM)$
   there exists a remainder $R \in \Gamma(TM)$ with $|R| \in O(\dist^2(x,y))$ such that
   \begin{equation}\label{eq:taylor_vector_field}
    \xi(y) = PT_{x \rightarrow y } \xi(x) + PT_{x \rightarrow y } 
    \nabla_{\exp_x^{-1}(y)} \xi(x) +R
   \end{equation}
   for all $x,y$ with $\dist(x,y) \leq \inj$. To prove this formula let $\gamma_{x,y}$ be the unit speed geodesic from $x$ to $y$, i.e.~, 
   $\gamma_{x,y}(0) = x, \gamma_{x,y}(\dist(x,y)) = y, \dot{\gamma_{x,y}} = 1$ (the dot represents the time derivative). Let $\{\partial_i\}_{i = 1}^d$ be 
   an orthonormal basis of $T_x M$. Then $E_i(t) := PT_{x \rightarrow \gamma_{x,y}(t)} \partial_i$ is an orthonormal
   basis of $T_{\gamma_{x,y}(t)} M$ for all $t \in [0, \dist(x,y)]$, i.e.~an orthonormal frame. 
   It holds (with Einstein notation)
   \begin{equation*}
    \xi(\gamma_{x,y}(t)) = \alpha_i(t) E_i(t)
   \end{equation*}
   with $\alpha_i(t)$ coefficient functions that smoothly depend on $t$. Taylor expansion of the 
   coefficient function yields 
   \begin{equation*}
    \xi(y) = \alpha_i(0)E_i(\dist(x,y)) + \alpha_i'(0)E_i(\dist(x,y)) \dist(x,y) + 
    \frac{\alpha_i''(\eta)}{2}E_i(\dist(x,y)) \dist^2(x,y)
   \end{equation*}
   with some $\eta \in [0, \dist(x,y)]$. One defines $R:=\frac{\alpha_i''(\eta)}{2}E_i(\dist(x,y)) \dist^2(x,y)$
   such that by the smoothness of $\alpha_i''$ the claimed $|R| \lesssim \dist^2(x,y)$ is verified.
   It holds by definition of parallel transport that $D_t E_i(t) = 0$ and 
   by definition of the covariant derivative along a curve $c$ that $D_t \xi(c(t)) = \nabla_{\dot{c}(t)} \xi$. So one concludes 
   with $\dot{\gamma_{x,y}}(0) = \frac{1}{\dist(x,y)}\exp_x^{-1}(y)$ and the linearity of the connection the Taylor formula by
   \begin{equation*}
    \alpha_i'(0)E_i(\dist(x,y))  = PT_{x \rightarrow y } \alpha_i'(0) E_i(0) 
    = PT_{x \rightarrow y } D_t\Big|_{t = 0} \alpha_i(t) E_i(t)
    = \frac{ PT_{x \rightarrow y } \nabla_{\exp_x^{-1}(y)} \xi(x) }{\dist(x,y)}.
   \end{equation*}
   Turning towards the limit \eqref{eq:limit_I}, one notes that by definition $\nabla_x G_h(x,y) = -\frac{\nabla_x \dist^2(x,y)}{4h} G_h(x,y)$.
   Using geodesic variations one can calculate 
   \begin{equation*}
    -\frac{1}{2}\nabla_x \dist^2(x,y) = \exp_x^{-1}(y) = PT_{y \rightarrow x}(-\exp_y^{-1}(x)) = 
    PT_{y \rightarrow x}(\frac{1}{2}\nabla_y \dist^2(x,y))
   \end{equation*}
   which implies, with $\langle PT_{x \rightarrow y} \xi(x) , \zeta(y) \rangle_y = \langle \xi(x) , PT_{y \rightarrow x} \zeta(y) \rangle_x $ for $\xi, \zeta \in \Gamma(TM)$ and the Taylor expansion \eqref{eq:taylor_vector_field}, that
   \begin{align}\label{eq:dif_heat_kernels}
    \langle \xi(x) , \nabla_x G_h(x,y) \rangle_x + \langle \xi(y) , \nabla_y G_h(x,y)\rangle_y
    &= \frac{G_h(x,y)}{2h}\langle \xi(y)- PT_{x \rightarrow y}^{\gamma_{x,y}}\xi(x) , \exp_y^{-1}(x)\rangle_y\\
    &= - \frac{G_h(x,y)}{2h}\langle
    \nabla_{\exp_x^{-1}(y)} \xi(x) , \exp_x^{-1}(y)\rangle_x\nonumber\\
    &\quad +\frac{G_h(x,y)}{2h}\langle R , \exp_y^{-1}(x)\rangle_y.\nonumber
   \end{align}
   As $\langle R, \exp_y^{-1} \rangle \in O(\dist^3(x,y))$ one obtains by Lemma \ref{lem:cut_and_replace}\ref{part:nonscaled_vanish}) that
   \begin{equation*}
    \lim_{h \downarrow 0}\frac{1}{\sqrt{h}} \int_M \int_{B_{\inj}(x)}  \frac{G_h(x,y)}{2h}\langle R , \exp_y^{-1}(x)\rangle_y v_0(x,y) (1-u_{h,i})(x)u_{h,i}(y) \dmx\dmy = 0.
   \end{equation*}
   Hence, after plugging \eqref{eq:dif_heat_kernels} into $I$ it only remains the summand incorporating the covariant derivative
   \begin{align*}
    \lim_{h \downarrow 0} I &= \lim_{h \downarrow 0} \frac{1}{\sqrt{h}} \int_M \int_{B_{\inj}(x)}  \left(\langle \xi(x) , \nabla_x G_h(x,y) \rangle_x
    + \langle \xi(y) , \nabla_y G_h(x,y)\rangle_y \right)v_0(x,y)\\
    &\hspace{120pt}\cdot  (1-u_{h,i})(x)u_{h,i}(y) \dmx\dmy \nonumber\\
    &= -\lim_{h \downarrow 0}\frac{1}{2h\sqrt{h}} \int_M \int_{B_{\inj}(x)}\langle 
    \nabla_{\exp_x^{-1}(y)} \xi(x) , \exp_x^{-1}(y)\rangle_x G_h(x,y)v_0(x,y) \\
    & \hspace{120pt}\cdot  (1-u_{h,i})(x)u_{h,i}(y) \dmx\dmy 
   \end{align*}
   By application of Lemma \ref{lem:conv_densities} to the right-hand side with $f(x,y,z_x,z_y) = \langle 
    \nabla_{z_x} \xi(x) , z_x\rangle_x \rho(x) v_0(x,y)$ one identifies the limit as
    \begin{equation*}
      -\frac{1}{2}\int_M \rho(x)\int_{T_x M}  \langle 
    \nabla_{z} \xi(x) , z\rangle_x G_1(z) (\langle \mathbf{n}_i(x), z \rangle_x )_+ \, dz \,|\nabla \chi_i|(x)
    \end{equation*}
    Now, \emph{Step 2.5} is completed after using normal coordinates as orthonormal basis $\{\partial_i\}_{i = 1}^d $ (s.t. $g_{ij} = \delta_i^j$), which implies $\langle \nabla_{\partial_i} \xi, \partial_i \rangle_x = tr(\nabla \xi) = \div \xi$, to compute for any $\mathbf{n} \in T_x M$ with $|\mathbf{n}|= 1$ (with notation as in \emph{Step 2.3}) 
    \begin{align*}
      &-\int_{T_x M}  \langle \nabla_{z} \xi(x) , z\rangle_x G_1(z) (\langle \mathbf{n}, z \rangle_x )_+ \, dz \\
      &= 2\int_{0 >\tilde{z}\cdot \tilde{\mathbf{n}}} \tilde{\mathbf{n}} \cdot \tilde{z} \nabla G_1(\tilde{z})^{\R^d} \cdot \{\langle \nabla_{\partial_i} \xi(x) , \partial_j \rangle \}_{ij} \cdot \tilde{z} \,d\tilde{z}\\
      &= - 2\int_{0 >\tilde{z}\cdot \tilde{\mathbf{n}}}\tilde{\mathbf{n}}\cdot\{\langle \nabla_{\partial_i} \xi(x) , \partial_j \rangle \}_{ij} \cdot \tilde{z} G_1^{\R^d}(\tilde{z}) 
      +  \tilde{\mathbf{n}} \cdot \tilde{z} G_1^{\R^d} \sum_{j = 1}^d \langle \nabla_{\partial_j} \xi(x), \partial_j \rangle_x \, d\tilde{z}\\
      &=- 4\int_{0 = \tilde{z}\cdot \tilde{\mathbf{n}}} \langle \mathbf{n}, \nabla_{\mathbf{n}} \xi(x) \rangle_x G_1^{\R^d}(z) + \langle \mathbf{n}, \mathbf{n}\rangle_x G_1^{\R^d}(\tilde{z}) \div \xi(x) \, d\tilde{z}\\
      &= - 2c_0 (\langle \mathbf{n}, \nabla_{\mathbf{n}} \xi(x) \rangle_x + \div \xi(x)).
    \end{align*}

    \emph{Step 2.6}: Conclusion of \emph{Step 2}, that is proof of limit \eqref{eq:conv_first_var}.
    By \emph{Step 2.3}, \emph{Step 2.4} and \emph{Step 2.5} it holds exactly the claimed quantity
    \begin{align*}
      \lim_{h \downarrow 0} \delta E_{h,i}(u_h, \xi) &=  c_0 \int_M 2(\div( \xi )\rho + \langle \xi, \nabla \rho \rangle)  -\langle \xi, \nabla \rho\rangle   -(\langle \mathbf{n}_i, \nabla_{\mathbf{n}_i} \xi \rangle + \div \xi ) \rho |\nabla \chi_i|\\
      &= c_0 \int_M \div(\xi) \rho + \langle \xi, \nabla \rho \rangle-\langle \mathbf{n}_i, \nabla_{\mathbf{n}_i} \xi \rangle \rho |\nabla \chi_i|.
    \end{align*}

    \emph{Step 3}: For any $\xi \in \Gamma(TM)$, $u_h$ the variational interpolation \eqref{eq:vi_prop} and $\partial_s u_{h,s} + \langle \nabla u_{h,s}, \xi \rangle = 0$ (distributionally) it holds
    \begin{equation}\label{eq:second_variation_distance}
       \lim_{h \downarrow 0} h \lim_{s \rightarrow 0} \frac{\frac{1}{2h}d_h^2(u_h, u_{h,s})} {s^2} = \frac{c_0}{2} \sum_{i = 1}^P \int_M \left( \langle   \xi ,\mathbf{n}_i\rangle \right)^2 \rho \, |\nabla \chi_i|.
    \end{equation}

    \emph{Step 3.1}: Simplification of the limit to 
    \begin{align}
      &\lim_{h \downarrow 0} h \lim_{s \rightarrow 0} \frac{\frac{1}{2h}d_h^2(u_h, u_{h,s})} {s^2}\label{eq:simplify_to_Hess}\\
       = &\lim_{h \downarrow 0} \sqrt{h} \int_M \int_{B_{\inj}(x)} \langle \nabla_y \langle \nabla_x G_h(x,y), \xi(x)  \rangle_x, \xi(y) \rangle_y v_0(x,y) u_{h,i}(x) (1-u_{h,i})(y) \dmx \dmy.\nonumber
    \end{align}

    By the definition \eqref{def:dist_h} of the metric $d_h$ and the definition of $u_{h,s}$ one concludes
    \begin{equation}\label{eq:second_variatio_distance}
      \lim_{h \downarrow 0} h \lim_{s \rightarrow 0} \frac{\frac{1}{2h}d_h^2(u_h, u_{h,s})} {s^2} = \lim_{h \downarrow 0} \sqrt{h} \int_M \int_M \langle \nabla u_{h,i}(x), \xi(x) \rangle p(h,x,y)\langle \nabla u_{h,i}(y), \xi(y) \rangle \dmx \dmy .
    \end{equation}
    Twice integrating by part the left-hand side of \eqref{eq:second_variatio_distance} together with $- \langle \xi, \nabla u \rangle = - \divrho(u \xi) + \divrho(\xi) u$ yields
    \begin{align*}
      \sqrt{h} \int_M \int_M \langle \nabla& u_{h,i}(x), \xi(x) \rangle p(h,x,y)\langle \nabla u_{h,i}(y), \xi(y) \rangle \dmx \dmy\\
      =\sqrt{h} \int_M \int_M &\Big(\langle \nabla_y \langle \nabla_x p(h,x,y), \xi(x)  \rangle_x, \xi(y) \rangle_y   \\
       &+\langle \nabla_x p(h,x,y), \xi(x)  \rangle_x \divrho(\xi)(y) \\
       &+\langle \nabla_y p(h,x,y), \xi(y)  \rangle_y \divrho(\xi)(x)\\
       &+ p(h,x,y) \divrho(\xi)(y) \divrho(\xi)(x) \Big)u_{h,i}(x) (1-u_{h,i})(y)\dmx \dmy
    \end{align*}
    The leading order term is the one containing $\nabla_y \nabla_x p$. To see that all the other terms vanish use, as before, Lemma \ref{lem:cut_and_replace} to cut off the far field term and reduce the kernel to $G_h v_0$ such that the limit is equal to the limit of
    \begin{align*}
      \sqrt{h} \int_M \int_{B_{\inj}(x)} &\Big(\langle \nabla_y \langle \nabla_x G_h(x,y), \xi(x)  \rangle_x, \xi(y) \rangle_y v_0(x,y)  \\
      &+ \langle \nabla_y \langle \nabla_x v_0(x,y), \xi(x)  \rangle_x, \xi(y) \rangle_y G_h(x,y) \\
      &+\langle \nabla_y v_0(x,y), \xi(y) \rangle_y \langle \nabla_x G_h(x,y), \xi(x)  \rangle_x  \\
      &+\langle \nabla_y G_h(x,y), \xi(y) \rangle_y \langle \nabla_x v_0(x,y), \xi(x)  \rangle_x \\
       &+\langle \nabla_x \big(G_h(x,y) v_0(x,y)\big), \xi(x)  \rangle_x  \divrho(\xi)(y)\\
       &+\langle \nabla_y \big(G_h(x,y) v_0(x,y)\big), \xi(y)  \rangle_y \divrho(\xi)(x) \\
       &+ G_h(x,y)v_0(x,y) \divrho(\xi)(y) \divrho(\xi)(x) \Big) u_{h,i}(x) (1-u_{h,i})(y)\dmx \dmy.
    \end{align*}
    Every summand, except the one containing $\nabla_x \nabla_y G_h$, is bounded for a given constant $C > 0$ by $C \left(1 + \frac{\dist(x,y)}{h}\right)G_h(x,y) $. Hence, by Lemma \ref{lem:cut_and_replace}\ref{part:nonscaled_vanish}) only the term 
    \begin{equation}
      \sqrt{h} \int_M \int_{B_{\inj}(x)} \langle \nabla_y \langle \nabla_x G_h(x,y), \xi(x)  \rangle_x, \xi(y) \rangle_y v_0(x,y) u_{h,i}(x) (1-u_{h,i})(y)\dmx \dmy
    \end{equation} 
    is non-vanishing in the limit which finishes \emph{Step 3.1}.

    \emph{Step 3.2}: Computation of the limit \eqref{eq:second_variation_distance}. It holds for the $u_{h,s}$, the variation along $\xi$, that
    \begin{equation*}
             \lim_{h \downarrow 0} h \lim_{s \rightarrow 0} \frac{\frac{1}{2h}d_h^2(u_h, u_{h,s})} {s^2} = \frac{c_0}{2} \sum_{i = 1}^P \int_M \left( \langle \mathbf{n}_i, \xi \rangle \right)^2 \rho \, |\nabla \chi_i|.
    \end{equation*}

    Using the definition of the gradient, the covariant derivative with respect to the Levi-Civita connection and the definition of $G_h$ one computes
    \begin{align}
      \langle \nabla_y \langle \nabla_x G_h(x,y), \xi(x)  \rangle_x, \xi(y) \rangle_y &= 
      \left(d_y \langle \nabla_x G_h(x,y), \xi(x)  \rangle_x \right)[\xi(y)] \\
      &= \frac{d}{dt}\Big|_{t=0} \langle \nabla_x G_h(x,c(t)), \xi(x)  \rangle_x \nonumber\\
      &= -\langle  D_t \Big|_{t=0} PT_{y \rightarrow x} \nabla_y G_h(x,c(t)), \xi(x)  \rangle_x \nonumber\\
      &= -\langle  PT_{y \rightarrow x} D_t \Big|_{t=0} \nabla_y G_h(x,c(t)), \xi(x)  \rangle_x \nonumber\\
      &= -\langle  \nabla_{\xi(y)} \big(\nabla G_h(x,\cdot)\big)(y), PT_{x \rightarrow y} \xi(x)  \rangle_y \nonumber\\
      &=\frac{1}{2h}G_h(x,y) \Big( \langle  \nabla_{\xi(y)} \nabla \frac{1}{2}\dist^2(x,\cdot), PT_{x \rightarrow y} \xi(x)  \rangle_y  \nonumber\\
      &\quad- \frac{1}{2}\Big\langle \frac{\exp_y^{-1}(x)}{\sqrt{h}}, \xi(y) \Big\rangle_y \Big\langle \frac{\exp_y^{-1}(x)}{\sqrt{h}} , PT_{x \rightarrow y}\xi(x) \Big\rangle_y \Big) \nonumber.
    \end{align}
    Plugging this into \eqref{eq:simplify_to_Hess} from \emph{Step 3.1} the limit \eqref{eq:second_variation_distance} is equal to
    \begin{align*}
      &\lim_{h \downarrow 0} \frac{1}{2\sqrt{h}} \int_M \int_{B_{\inj}(x)} \Big\langle  \nabla_{\xi(y)} \nabla \frac{\dist^2(x,\cdot)}{2}, PT_{x \rightarrow y} \xi(x)  \Big\rangle_y G_h(x,y) v_0(x,y)\\
      &\hspace{120pt} \cdot u_h(x) (1-u_h)(y) \dmx \dmy\\
      -      & \lim_{h \downarrow 0} \frac{1}{4\sqrt{h}} \int_M \int_{B_{\inj}(x)} \Big\langle \frac{\exp_y^{-1}(x)}{\sqrt{h}}, \xi(y) \Big\rangle_y \Big\langle \frac{\exp_y^{-1}(x)}{\sqrt{h}} , PT_{x \rightarrow y}\xi(x) \Big\rangle_y G_h(x,y) v_0(x,y)\\
      &\hspace{120pt} \cdot u_h(x) (1-u_h)(y) \dmx \dmy.
    \end{align*}
    With the same proof as in Lemma \ref{lem:cut_and_replace}\ref{part:cut_and_replace}) we can replace the domain of integration $B_{\inj}(x)$ by $B_r(x)$ with $r = \inj \wedge  \sqrt[4]{\frac{1}{2 d^3 C_R^2 }e^{-1 - d C_R \diam(M)^2}}$. Hence, due to Lemma \ref{lem:hessian_dist_equal_identity} and Lemma \ref{lem:cut_and_replace}\ref{part:nonscaled_vanish}) one can replace the Hessian of the squared distance by the identity
    \begin{align*}
      &\lim_{h \downarrow 0} \frac{1}{\sqrt{h}} \int_M \int_{B_{r}(x)} \Big\langle \xi(y) -  \nabla_{\xi(y)} \nabla \frac{\dist^2(x,\cdot)}{2}, PT_{x \rightarrow y} \xi(x)  \Big\rangle_y G_h(x,y) v_0(x,y) \\
      &\hspace{120pt} \cdot u_h(x) (1-u_h)(y) \dmx \dmy\\
      & = 0
    \end{align*}
    So one is now in the position to pass to the limit with $f(x,y,z_x, z_y) = \langle \xi(y), PT_{x \rightarrow y} \xi(x) \rangle_y - \langle z_y, \xi(y) \rangle_y \langle z_y , PT_{x \rightarrow y} \xi(x) \rangle_y$ in Lemma \ref{lem:conv_densities} to compute
    \begin{align*}
      \lim_{h \downarrow 0} h \lim_{s \rightarrow 0} \frac{\frac{1}{2h}d_h^2(u_h, u_{h,s})} {s^2} = \frac{1}{4}\sum_{i = 1}^P \int_M  \rho(x) \int_{T_x M}\!(2|\xi(x)|^2- \langle \xi(x), z \rangle_x^2 ) G_1(z) (\langle \mathbf{n}_i (x), z \rangle_x)_+ \, dz \,|\nabla \chi_i|.
    \end{align*}
    It remains to explicitly compute the $z$ integral for any $\mathbf{n} = \mathbf{n}_i(x) \in TM$ (in notation of \emph{Step 2.3})
    \begin{align*}
      &\int_{T_x M}(2|\xi(x)|^2- \langle \xi(x), z \rangle_x^2 ) G_1(z) (\langle \mathbf{n}, z \rangle_x)_+ \, dz \\
      &= 2c_0|\xi(x)|^2 + 2\int_{\tilde{z} \cdot \tilde{\mathbf{n}} > 0} \tilde{\xi}(x) \cdot \tilde{z} \ \tilde{\xi}(x)\cdot \nabla_{\tilde{z}} G_1^{\R^d}(\tilde{z})\ \tilde{\mathbf{n}} \cdot \tilde{z} \, d\tilde{z}\\
      &= 2c_0|\xi(x)|^2 - 2\int_{\tilde{z} \cdot \tilde{\mathbf{n}} > 0} |\tilde{\xi}(x)|^2 G_1^{\R^d}(\tilde{z}) \tilde{\mathbf{n}}\cdot\tilde{z} +  \tilde{\xi}(x)\cdot \tilde{z} G_1^{\R^d}(\tilde{z}) \tilde{\mathbf{n}} \cdot \tilde{\xi}(x)\, d\tilde{z}\\
      &= 2c_0\langle \xi(x) , \mathbf{n} \rangle_x^2.
    \end{align*}

    \emph{Step 4}: Conclusion.
    Utilizing exactly the previously shown \emph{Step 1} (applied to $(E,d,u) = (E_h^n, d_h, u_h(t))$), \emph{Step 2} and \emph{Step 3} (integrated in time) it follows
    \begin{align}\label{eq:befor_riesz}
      &\liminf_{h \downarrow 0} \frac{1}{2}\sum_{n = 1}^{\lfloor\frac{T}{h}\rfloor}\int_{(n-1)h}^{nh} |\partial E_h^n|^2(u_h(t)) \,dt  \\
       &\geq - \lim_{h \downarrow 0} \sum_{n = 1}^{\lfloor\frac{T}{h}\rfloor}\int_{(n-1)h}^{nh} \delta E_h^n(u_h, \xi) \,dt -\int_0^T h \lim_{s \rightarrow 0} \frac{\frac{1}{2h}d_h^2(u_h, u_{h,s})} {s^2} \,dt \nonumber\\
      &= -c_0 \sum_{i = 1}^P \int_0^T \int_M  \div(\xi) \rho + \langle \xi, \nabla \rho \rangle-\langle \mathbf{n}_i, \nabla_{\mathbf{n}_i} \xi \rangle \rho  -  \rho \langle\xi,\mathbf{n}_i
    \rangle \Lambda_i + \rho\langle \xi ,\mathbf{n}_i \rangle^2  \, |\nabla \chi_i| \,dt.\nonumber
    \end{align}
    As $\rho > 0$ and left-hand side of \eqref{eq:befor_riesz} is bounded it follows by Riesz' representation theorem in $L^2(|\nabla \chi_i| dt)$ the existence of the mean curvature vectors $\mathbf{n}_i H_i$ given by \eqref{def:mean_curvature}. Now, the definition \eqref{def:mean_curvature} of the mean curvature vectors applied to $\rho \xi$ yields 
    \begin{equation*}
      \sum_{i = 1}^P\int_M H_i \rho \langle \xi, \mathbf{n}_i\rangle |\nabla \chi_i| = \sum_{i = 1}^P\int \rho(\div (\xi) - \langle \mathbf{n}_i, \nabla_{\mathbf{n}_i} \xi \rangle) + \langle \nabla \rho, \xi \rangle - \langle \mathbf{n}_i, \nabla \rho \rangle \langle \xi, \mathbf{n}_i\rangle |\nabla \chi_i|
    \end{equation*}
    such that \eqref{eq:befor_riesz} turns into
    \begin{align*}
      &\liminf_{h \downarrow 0} \frac{1}{2}\sum_{n = 1}^{\lfloor\frac{T}{h}\rfloor}\int_{(n-1)h}^{nh} |\partial E_h^n|^2(u_h(t)) \,dt  \\
      &\geq -c_0 \sum_{i = 1}^P \int_0^T \int_M \rho \langle \xi, \mathbf{n}_i\rangle ( H_i - \Lambda_i + \langle \mathbf{n}_i,  \nabla \log \rho\rangle) + \rho\langle \xi ,\mathbf{n}_i \rangle^2  \, |\nabla \chi_i| \,dt.
    \end{align*}
    Taking the supremum over all smooth vector fields $\xi \in C^\infty_0((0,T), \Gamma(TM))$ implies 
    \begin{align}
       &\liminf_{h \downarrow 0} \frac{1}{2}\sum_{n = 1}^{\lfloor\frac{T}{h}\rfloor}\int_{(n-1)h}^{nh} |\partial E_h^n|^2(u_h(t)) \,dt \geq  \frac{c_0}{2} \sum_{i = 1}^P\int_0^T \int_M   \left( H_i - \Lambda_i + \langle \mathbf{n}_i,  \nabla \log \rho\rangle\right)^2 \rho \,|\nabla \chi_i|\,dt. \label{eq:ambiguous_lambda}
    \end{align}
    Note that \[\overline{H_i + \langle \mathbf{n}_i, \nabla \log \rho \rangle} := \frac{ \int_M ( H + \langle \mathbf{n}_i, \nabla \log \rho \rangle ) \rho |\nabla \chi_i| }{\int_M \rho |\nabla \chi_i|}\] is the minimizer among all $\Lambda_i$ of the $L^2$-distance appearing on the right-hand side term of \eqref{eq:ambiguous_lambda} and hence
    \begin{align*}
       &\liminf_{h \downarrow 0} \frac{1}{2}\sum_{n = 1}^{\lfloor\frac{T}{h}\rfloor}\int_{(n-1)h}^{nh} |\partial E_h^n|^2(u_h(t)) \,dt  \\&\geq  \frac{c_0}{2} \sum_{i = 1}^P \int_0^T  \int_M \left( H_i  + \langle \mathbf{n}_i,  \nabla \log \rho\rangle - \overline{H_i + \langle \mathbf{n}_i, \nabla \log \rho \rangle}\right)^2 \rho \,|\nabla \chi_i|\,dt.
    \end{align*}
    The result holds true for the whole sequence as the limit is now independent of the subsequence chosen in the compactness of the $\lambda_h$.
\end{proof}

\begin{proof}[Proof of Proposition \ref{prop:distance}]
Due to the definition \eqref{def:dist_h} of $d_h$, the semigroup property $p(h + h') = p(h) * p(h')$  and the symmetry of the heat kernel it holds
\begin{align*}
  \int_h^T \frac{1}{2h^2} d_h^2(\chi_h(t), \chi_h(t-h)) \, dt = \sum_{i = 1}^P\int_h^T \int_M \frac{1}{h \sqrt{h}}\Big|p\Big(\frac{h}{2}\Big)*(\chi_{h,i} - \chi_{h,i}(\cdot - h))\Big|^2 \, d\mu \, dt.
\end{align*} 
Here and in the following it will be denoted $\chi_{i,h}( \cdot - t)(x,s) = \chi_{i,h}(x, s-t)$ for any $t \in \R$.
This motivates the introduction of the non-negative (bounded) ``dissipation measures'' on $(0,T) \times M$
\begin{equation}
  \nu_i := \lim_{h \downarrow 0} \frac{1}{h \sqrt{h}} \Big|p\Big(\frac{h}{2}\Big)*(\chi_{h,i} - \chi_{h,i}(\cdot - h))\Big|^2 d\mu \, dt,
\end{equation}
where $p*$ defined in Chapter \ref{sec:notation} is the convolution in space but not in time.
Hence, to prove \eqref{eq:prop_distance} it will be sufficient to prove the localized version
\begin{equation}\label{eq:localized_dissipation_inequality}
  \frac{c_0}{2} |V_i|^2 \rho |\nabla \chi_i |dt \leq \nu_i
\end{equation}
Indeed, inequality \eqref{eq:localized_dissipation_inequality} implies that the limit of every converging subsequence of $\frac{1}{h \sqrt{h}} |p(\frac{h}{2})*(\chi_{h,i} - \chi_{h,i}(\cdot - h))|^2 d\mu \, dt$ is bounded from below by $ \frac{c_0}{2} |V_i|^2 \rho |\nabla \chi_i |dt$ which shows that the $\liminf$ in \eqref{eq:prop_distance} is bounded from below by the right quantity. 

To that aim assume from now on that we fixed an $i \in \{0,\dots, P\}$ and drop the $i$ in the notation for easier readability. Also, the measures $\mu \times dt$ will be left out in the notation; whenever a distributional equation or limit of a function $f$ is discussed, it is understood to refer to the measure $f \mu \,dt$.  Define the intermediate timescale
\begin{equation}
  \tau := \alpha \sqrt{h}, 
\end{equation}
which is a fraction of the characteristic spatial timescale for some fixed $\alpha \in (0, \infty)$. In the very end the limit $\alpha \downarrow 0$ will be taken so think about $\alpha$ small. The difference between two time steps is described by the variables
\begin{equation}\label{def:deltas}
  \delta \chi := \chi_{h} - \chi_{h}(\cdot - \tau) \text{ and } \delta \chi_{\pm}:= \max\{0,\pm\delta \chi \}.
\end{equation}

\emph{Step 1}: The mixed term vanishes distributionally 
\begin{align}\label{eq:vel_step1}
  \lim_{h \downarrow 0} \frac{1}{\sqrt{h}} \left( \delta \chi_+\, p(h) * \delta \chi_- + \delta \chi_-\, p(h) * \delta \chi_+ \right) = 0.
\end{align}
  Spelling out the convolution it holds in a distributional sense
  \begin{equation}\label{eq:mixedterm}
    \delta \chi_+ \,p(h)*\delta \chi_- + \delta \chi_-\, p(h) * \delta \chi_+ = \int_M \delta \chi_+\, p(h, \cdot,y) \delta \chi_-(y) + \delta \chi_+( y)\, p(h, \cdot,y) \delta \chi_-  \dmy.
  \end{equation}
  By definition of \eqref{def:deltas} and the fact that $\chi_h \in \{0,1\}$ one can rephrase $\delta \chi_+ = \chi_{h} (1-\chi_{h})(\cdot - \tau)$ and $\delta \chi_- = \chi_{h}(\cdot - \tau)(1 - \chi_{h})$. Using this one checks 
  \begin{align}\label{eq:deltapdelatm}
  &\delta \chi_+(x) \, \delta \chi_-(y) \leq (1- \chi_{h})(x, \cdot - \tau) \chi_h(y, \cdot - \tau)\nonumber,\\
    &\delta \chi_+(y) \,\delta \chi_-(x) \leq (1- \chi_{h})(x) \chi_h(y).
  \end{align}
  Thus, together with Lemma \ref{lem:cut_and_replace}\ref{part:cut_and_replace}) it is ok to pass to 
  \begin{align*}
        &\lim_{h \downarrow 0} \frac{1}{\sqrt{h}} \delta \chi_+ p(h)*\delta \chi_- + \delta \chi_- p(h) * \delta \chi_+\\
        &= \lim_{h \downarrow 0} \frac{1}{\sqrt{h}}\int_{B_{\inj}(\cdot)}\delta \chi_+( \cdot) p(h, \cdot,y) \delta \chi_-(y) + \delta \chi_+( y) p(h, \cdot,y) \delta \chi_-(\cdot)  \dmy.
  \end{align*}
  Now split the domain of integration for some fixed tangent vector $\bar{\mathbf{n}}$ into
  \begin{align}\label{eq:splitted}
 \int_{\{y:\langle \bar{\mathbf{n}}, \exp^{-1}(y) \rangle \geq0 \} \cap B_{\inj}} \delta \chi_+(\cdot) p(h,\cdot,y)  \delta \chi_-(y) + \delta \chi_+( y) p(h, \cdot,y) \delta \chi_-(\cdot) \dmy  
\end{align}
and the analogous counterpart including $\{y:\langle \bar{\mathbf{n}}, \exp^{-1}(y) \rangle \leq 0 \}\cap B_{\inj}$. The estimates \eqref{eq:deltapdelatm} yield now that \eqref{eq:splitted} is dominated by the distributional limit of 
\begin{align*}
\int_{\{y:\langle \bar{\mathbf{n}}, \exp^{-1}(y) \rangle \geq 0 \} \cap B_{\inj}}p(h,\cdot,y) \big( (1- \chi_{h})(\cdot, \cdot - \tau) \chi_h(y, \cdot - \tau) + (1- \chi_{h})(\cdot) \chi_h(y) \big) \dmy 
\end{align*}
After putting the constant time shift $- \tau$ onto the test function this is again equivalent to the weak limit of 
\begin{equation*}
  2\int_{\{y:\langle \bar{\mathbf{n}}, \exp^{-1}(y) \rangle \geq0 \} \cap B_{\inj}}p(h,\cdot,y)(1- \chi_{h})(\cdot) \chi_h(y)  \dmy
\end{equation*}
if it exists. Lemma \ref{lem:conv_densities} applied to the characteristic function of the closed set $\{y:\langle \bar{\mathbf{n}}, \exp^{-1}(y) \rangle \geq 0 \} $ yields that the weak limit is dominated by the measure 
\begin{align*}
  2\int_{\langle \bar{\mathbf{n}}, z \rangle \geq0  } G_1(z) \langle \mathbf{n}, z \rangle_- \,dz \,\rho|\nabla \chi| \, dt.
\end{align*}
Note that Lemma \ref{lem:conv_densities} yields here $\langle \mathbf{n},z \rangle_-$ due to the swapped roles of $x$ and $y$ which yield the opposite sign of $z$ as $z = \frac{\exp_x^{-1}(y)}{\sqrt{h}} = -PT_{y \rightarrow x}\frac{\exp_y^{-1}(x)}{\sqrt{h}} $.
The other integration domain $\{y:\langle \bar{\mathbf{n}}, \exp^{-1}(y) \rangle \leq 0 \}\cap B_{\inj}$ is taken care of similarly by swapping the roles of $\chi$ and $1 - \chi$ such that this part is dominated by $  2\rho\int_{\langle \bar{\mathbf{n}}, z \rangle \leq 0  } G_1(z) \langle \mathbf{n}_i, z \rangle_+ |\nabla \chi| \, dt\,dz$. 

To summarize, the weak limit $\lambda \geq 0$ of \eqref{eq:mixedterm} is bounded by 
\begin{align}\label{eq:bound_weak_limit}
  \lambda \leq 2 \left(\int_{\langle \bar{\mathbf{n}}, z \rangle \geq 0} G_1(z) \langle \mathbf{n}, z \rangle_- \, dz+ 
  \int_{\langle \bar{\mathbf{n}}, z \rangle \leq 0  } G_1(z) \langle \mathbf{n}, z \rangle_+ \, dz\right) \rho\,|\nabla \chi| \,dt.
\end{align}
Due to (see the computation in \emph{Step 2.3} in Proposition \ref{prop:metricslope})
\begin{equation*}
  c_0 =
\int_{\langle \bar{\mathbf{n}}, z \rangle \geq 0  } G_1(z) \langle \mathbf{n}, z \rangle_- \, dz = \int_{\langle \bar{\mathbf{n}}, z \rangle \leq 0} G_1(z) \langle \mathbf{n}, z \rangle_+ \, dz
\end{equation*}
it holds even $\lambda \leq 2 c_0 \|\rho\|_\infty |\nabla \chi|dt$, such that there exists a Radon-Nikodym derivative $\theta \in L^1(|\nabla \chi| dt)$ with $\lambda = \theta |\nabla \chi| dt$. Hence, the estimate \eqref{eq:bound_weak_limit} can be rewritten as 
\begin{align*}
   \theta \leq 2 \rho \int_{\langle \bar{\mathbf{n}}, z \rangle \geq 0 } G_1(z) \langle \mathbf{n}, z \rangle_- \, dz+ 
  2\rho \int_{\langle \bar{\mathbf{n}}, z \rangle \leq 0} G_1(z) \langle \mathbf{n}, z \rangle_+ \, dz \\ |\nabla \chi| dt  \text{--a.e. and for all } \bar{\mathbf{n}} .
\end{align*}
By an elementary separability argument one can change the order between the ``for all'' quantors, such that one is allowed to choose $\bar{\mathbf{n}} = \mathbf{n}$, resulting to $\theta \leq 0$ $|\nabla \chi|dt$ - a.e. This finishes \emph{Step 1}.

\emph{Step 2}: It holds distributionally 
\begin{equation}\label{eq:vel_step2}
  \limsup_{h \downarrow 0} \frac{1}{\sqrt{h}} \delta \chi \, p(h)*\delta \chi d\mu dt\leq \alpha^2 \nu.
\end{equation}
Again the $\mu \times dt$ is dropped in the notation. It is sufficient to prove the following two statements
\begin{align}
  \lim_{h \downarrow 0} \frac{1}{\sqrt{h}} \big(\delta \chi p(h)* \delta\chi - |p(\frac{h}{2}) * \delta \chi|^2 \big) = 0 \label{eq:vanish_commutator},\\
  \limsup_{h \downarrow 0} \frac{1}{\sqrt{h}} |p(\frac{h}{2}) * \delta \chi |^2- \alpha^2 \frac{1}{\sqrt{h}h} |p(\frac{h}{2})*(\chi_{h} - \chi_{h}(\cdot - h))|^2 \leq 0 \label{eq:boundalpha2}.
\end{align}
Start with \eqref{eq:boundalpha2} by assuming w.l.o.g.~that $\tau = Nh$ for $N \in \N$, i.e., $\alpha = N \sqrt{h}$, to apply the Cauchy-Schwarz inequality after telescoping
\begin{align*}
  \frac{1}{\sqrt{h}} |p(\frac{h}{2})*(\chi_{h} - \chi_{h}(\cdot - h))|^2 &\leq N \sum_{n = 0}^{N-1} \frac{1}{\sqrt{h}} |p(\frac{h}{2})*(\chi_{h}(\cdot - nh) - \chi_{h}(\cdot - (n+1)h))|^2\\
  &=  \alpha^2 \frac{1}{N} \sum_{n = 0}^{N-1} \frac{1}{h\sqrt{h}} |p(\frac{h}{2})*(\chi_{h}(\cdot - nh) - \chi_{h}(\cdot - (n+1)h))|^2.
\end{align*}
The right-hand side is an average of time shifts of $ \alpha^2 \frac{1}{\sqrt{h}h} |p(\frac{h}{2})*(\chi_{h} - \chi_{h}(\cdot - h))|^2$ and these time shifts are small as $Nh = O(\sqrt{h}) = o(1)$. Hence, the expression has the same weak limit as $\alpha^2 \frac{1}{\sqrt{h}h} |p(\frac{h}{2})*(\chi_{h} - \chi_{h}(\cdot - h))|^2$ which yields \eqref{eq:boundalpha2}.

For \eqref{eq:vanish_commutator} one takes a smooth test function $\zeta \in C_0^\infty(M \times (0,T))$ and uses the semigroup property of the heat kernel to compute 
\begin{equation*}
  \int_0^T \int_M \zeta (\delta \chi \, p(h)* \delta \chi - |p(\frac{h}{2})*\delta \chi|^2) \,d\mu \,dt= - \int_0^T \int_M \left([\zeta, p(\frac{h}{2})*]\delta \chi\right)  p(\frac{h}{2}) * \delta \chi \,d\mu \,dt,
\end{equation*}
with $[\zeta, p(\frac{h}{2})*]$ denoting the commutator of multiplying with $\zeta$ and convoluting with $p(\frac{h}{2})$. Due to \eqref{eq:boundalpha2} the sequence of measures $\frac{1}{\sqrt{h}} |p(\frac{h}{2})*\delta \chi|^2$ is bounded; hence, it is enough to show 
\begin{equation}\label{eq:comm_calc} 
  \lim_{h \downarrow 0} \frac{1}{\sqrt{h}} \int_0^T\int_M |[\zeta, p(\frac{h}{2})*]\delta \chi|^2 \, d\mu \,dt= 0.
\end{equation}
  Spelling out the commutator and using $|\delta \chi| \leq 1$ yields
\begin{align*}
 |( [\zeta, p(\frac{h}{2})*]\delta \chi )(x,t)| &= \left|\int_M p(\frac{h}{2}, x,y) (\zeta(x,t) - \zeta(y,t)) \delta \chi(y,t) \dmy\right|\\
 &\leq \|\nabla \zeta\|_\infty \int_M  p(\frac{h}{2}, x,y) \dist(x,y)\dmy.
\end{align*}
By the Gaussian bound \eqref{eq:gaussian_one}, the scaling of balls \eqref{eq:doubling_prop}, and $\sup_x x e^{-x} < \infty$ it holds 
\begin{equation*}
  \frac{\dist(x,y)}{\sqrt{h}}p(\frac{h}{2},x,y) \lesssim\frac{\dist(x,y)}{\sqrt{h}} G_{C_4\frac{h}{2 }}(x,y)\lesssim G_{C_4h}(x,y)
\end{equation*}
such that together with the Gaussian bound \eqref{eq:gaussian_one} which implies $\int_M G_{C_4h} \lesssim \int_M p({\frac{C_4}{C_2}h}) = 1$ it holds
\begin{align*}
  \frac{1}{\sqrt{h}} \int_0^T \int_M |[\zeta, p(\frac{h}{2})*]\delta \chi|^2 \, d\mu \,dt &\lesssim \sqrt{h}\|\nabla \zeta\|^2_\infty \int_0^T \int_M\left( \int_M  G_{C_4h}(x,y) \dmy\right)^2 \dmx \,dt\\
  &\lesssim \sqrt{h}\|\nabla \zeta\|^2_\infty T.
\end{align*}
 Equation \eqref{eq:vanish_commutator} is verified, as the right-hand side is vanishing.

 \emph{Step 3}: For any given unit vector field $\bar{\mathbf{n}} \in \Gamma (TM), |\bar{\mathbf{n}}| = 1 \ \mu$-a.e.\ and $\bar{V} \in (0, \infty)$ it holds in a distributional sense
 \begin{align}\label{eq:vel_step3}
  \limsup_{h \downarrow 0} \left(\frac{1}{\sqrt{h}} (\delta \chi_+ p(h)*(1- \delta \chi_+) + \delta \chi_- p(h)* (1- \delta \chi_-) - 2 \int_{\langle z, \bar{\mathbf{n}} \rangle > \alpha \bar{V}} G_1(z)\,dz|\delta \chi| )\right)d\mu \,dt \\
  \leq 2 \int_{0 \leq \langle z, \bar{\mathbf{n}} \rangle < \alpha \bar{V}} G_1(z) |\langle z, \mathbf{n} \rangle | \,dz \,\rho |\nabla \chi| \,dt.\nonumber
 \end{align}
 Again $d\mu \times dt$ is dropped in notation. The proof of this step can be split in three parts:
 \begin{enumerate}[1)]
  \item  The left-hand side is substituted by
  \begin{align}\label{eq:com_vanish}
    \lim_{h \downarrow 0} \frac{1}{\sqrt{h}} (\delta \chi_{\pm}) p(h)* (1- \delta \chi_{\pm}) = \lim_{h \downarrow 0} \frac{1}{2\sqrt{h}}\int_{B_{\inj}(\cdot)} p(h,\cdot, y)|\delta \chi_\pm - \delta \chi_\pm (y)| \dmy .
  \end{align}

  \item The integrand can then be estimated in two ways 
  \begin{align}\label{eq:time_space}
    |\delta \chi_+(x) - \delta \chi_+(y)| + |\delta \chi_- - \delta \chi_-(y) |\leq \begin{cases}
      |\chi_{h}(x) - \chi_{h}(y) | + |\chi_{h}(x, \cdot - \tau ) - \chi_{h}(y, \cdot - \tau)| \\
      |\delta \chi(x)| + |\delta \chi(y)|.
    \end{cases}
  \end{align}
Denote for any $x,y \in M, z_x \in T_x M, z_y \in T_y M$ the in $x$ and $y$ antisymmetric function
 \begin{align*}
 f(x,y, z_x, z_y) = \frac{1}{2}(\langle z_x, \bar{\mathbf{n}}(x) \rangle_x - \langle z_y, \bar{\mathbf{n}}(y) \rangle_y ).
 \end{align*} 
 Then the first ``space-like'' inequality is used on the set 
 \begin{equation*}
   I := \{y:|f(\cdot, y, \exp_{\boldsymbol{\cdot}}^{-1}(y), \exp_y^{-1}(\cdot)) | \leq \tau \bar{V} \cap B_{\inj}(\cdot)\}
 \end{equation*}
while the second ``time-like'' inequality is used on the complementary set 
\begin{equation*}
O := \{y: |f(\cdot, y, \exp_{\boldsymbol{\cdot}}^{-1}(y), \exp_y^{-1}(\cdot)) | > \tau \bar{V} \cap B_{\inj}(\cdot)\}.
\end{equation*}
  \item Finally, the following statements are applied to the respective quantities
  \begin{align}
    \limsup_{h \downarrow 0} &\frac{1}{2\sqrt{h}} \int_{I} p(h,\cdot, y) ( |\chi_{h}(\cdot) - \chi_{h}(y) |  + |\chi_{h}( \cdot - \tau ) - \chi_{h}(y, \cdot - \tau)| \dmy \label{eq:bound_space_like} \\
    &\leq 2 \int_{0 \leq \langle z, \bar{\mathbf{n}} \rangle < \alpha \bar{V}}G_1(z) |\langle z, \mathbf{n} \rangle| \rho |\nabla \chi| \,dz\nonumber\\
    \lim_{h \downarrow 0}& \frac{1}{2\sqrt{h}}\Big( \int_{O} p(h,\cdot,y) (|\delta \chi| + |\delta \chi(y)|) \dmy 
    - 2\int_{ \langle z, \bar{\mathbf{n}} \rangle  \geq \alpha \bar{V}} G_1(z) \, dz\, |\delta \chi|  \Big) = 0. \label{eq:lim_time_like}
  \end{align}
 \end{enumerate}

 Let's start with the proof of \eqref{eq:com_vanish} for which one first realizes 
 \begin{equation*}
  \lim_{h \downarrow 0} \frac{1}{\sqrt{h}} (\delta \chi_{\pm}) p(h)* (1- \delta \chi_{\pm}) = \lim_{h \downarrow 0} \frac{1}{\sqrt{h}} \int_{B_{\inj}}(\delta \chi_{\pm}) p(h, \cdot, y) (1- \delta \chi_{\pm})(y) \dmy
 \end{equation*}
 by Lemma \ref{lem:cut_and_replace}\ref{part:cut_and_replace}) after using the bound $\delta \chi_{\pm}(x)(1-\delta \chi_\pm)(y) \leq \chi_{h}(x)(1-\chi_{h})(y) + \chi_{h}(y, \cdot - \tau)(1-\chi_{h})(x, \cdot - \tau)$. Furthermore, with a similar reasoning as for \eqref{eq:comm_calc} the commutator vanishes such that
 \begin{equation*}
  \lim_{h \downarrow 0} \frac{1}{\sqrt{h}} \int_{B_{\inj}}\!\!(\delta \chi_{\pm}) p(h, \cdot, y) (1- \delta \chi_{\pm})(y) \dmy = \lim_{h \downarrow 0} \frac{1}{\sqrt{h}} \int_{B_{\inj}}\!\!(1- \delta \chi_{\pm})p(h, \cdot, y)(\delta \chi_{\pm})(y)   \dmy.
 \end{equation*}
 To summarize, the two previous identities together with $|\chi - \tilde{\chi}| = \chi (1 - \tilde{\chi}) + \tilde{\chi}(1 - \chi)$ for any $\chi, \tilde{\chi} \in \{0,1\}$ yield 
 \begin{align*}
  &\lim_{h \downarrow 0} \frac{1}{\sqrt{h}} (\delta \chi_{\pm}) p(h)* (1- \delta \chi_{\pm}) \\
  &= \frac{1}{2} \frac{1}{\sqrt{h}} \int_{B_{\inj}}(\delta \chi_{\pm}) p(h, \cdot, y) (1- \delta \chi_{\pm})(y) + (1- \delta \chi_{\pm})p(h, \cdot, y)(\delta \chi_{\pm})(y) \dmy 
  \\
  &= \frac{1}{2} \frac{1}{\sqrt{h}} \int_{B_{\inj}}(\delta \chi_{\pm}) p(h, \cdot, y) |\delta \chi_{\pm} - \delta \chi_{\pm}(y)|\dmy 
 \end{align*}
 which is precisely \eqref{eq:com_vanish}.

 The proof of \eqref{eq:time_space} is equivalent as in \cite[Proposition 1]{MR4385030}. The second inequality follows by the triangle inequality $|\delta \chi_\pm(x) - \delta \chi_\pm(y)| \leq \delta \chi_\pm(x) + \delta \chi_\pm(y)$ together with $\delta \chi_+ + \delta \chi_- = |\delta \chi|$. The first inequality is a consequence of 
 \begin{equation}
  |(a-a')_+ - (b-b')_+| + |(a-a')_- - (b-b')_-| \leq |a - b| +|a' - b'|,
 \end{equation}
 which can be verified by checking the two cases $(a-a')(b-b') \geq 0$ and $(a-a')(b-b') < 0$.

 Turning towards \eqref{eq:bound_space_like} one notes that the inequality follows from
 \begin{align}
  \limsup_{h \downarrow 0} \frac{1}{2\sqrt{h}} \int_{I} p(h,\cdot, y)  \big(|\chi_{h}(\cdot) - \chi_{h}(y) |  + |\chi_{h}( \cdot - \tau ) - \chi_{h}(y, \cdot - \tau)|\big) \dmy \nonumber\\
   \leq \int_{| \langle z, \bar{\mathbf{n}} \rangle| \leq \alpha \bar{V}}G_1(z) |\langle z, \mathbf{n} \rangle| \rho |\nabla \chi| \,dz \label{eq:bound_space_like_easy}
 \end{align}
 when using that $G_h$ is even such that
 \begin{equation*}
  \int_{0 \leq \langle z, \bar{\mathbf{n}} \rangle \leq \alpha \bar{V}}G_1(z) |\langle z, \mathbf{n} \rangle| \,dz = \int_{0 \geq \langle z, \bar{\mathbf{n}} \rangle \geq \alpha \bar{V}}G_1(z) |\langle z, \mathbf{n} \rangle| \,dz.
 \end{equation*}
But the bound \eqref{eq:bound_space_like_easy} is a consequence of Lemma \ref{lem:conv_densities}, tested with the characteristic function of the closed set $\{ f(x,y,\exp^{-1}_{\boldsymbol{\cdot}}(y), \exp^{-1}_y(\cdot)) \leq \tau \bar{V} \} \cap B_{\inj}$, after putting the time shift $\cdot - \tau$ on the test function and using $|\chi_{h, i}(x) - \chi_{h}(y)| = \chi_{h}(x)(1-\chi_{h})(y) + \chi_{h}(y) (1- \chi_{h})(x)$ as well as $f(x,x, z, -z) = \langle z, \bar{\mathbf{n}}(x) \rangle_x$.

The limit \eqref{eq:lim_time_like} follows from 
\begin{equation}\label{eq:diff_deltas}
  \lim_{h \downarrow 0} \frac{1}{2\sqrt{h}} \int_{|f(\cdot, y, \exp^{-1}(y), \exp_y^{-1}(\cdot))| > \tau \bar{V} \cap B_{\inj}(x)} p(h,\cdot, y) (|\delta \chi(y)| - |\delta \chi|) \dmy = 0
\end{equation}
and 
\begin{equation}\label{eq:change_to_normal_co}
   \lim_{h \downarrow 0} \frac{1}{\sqrt{h}} \int_{|f(\cdot, y, \exp_x^{-1}(y), \exp_y^{-1}(\cdot))| > \tau \bar{V} \cap B_{\inj}(x)} p(h,\cdot, y) |\delta \chi| \dmy - \frac{2}{\sqrt{h}} \int_{\langle z, \bar{\mathbf{n}} \rangle \geq \alpha \bar{V}} G_1(z) \,dz |\delta \chi|.
\end{equation}
To prove equation \eqref{eq:diff_deltas} one uses a test function $\xi \in C^{\infty}(M \times (0, T))$ and the symmetry of $f$ and $p(h)$ to estimate
\begin{align*}
  &\left|\frac{1}{2\sqrt{h}} \int_0^T \int_M \xi(x) \int_{|f(x, y, \exp^{-1}(y), \exp_y^{-1}(x))| > \tau \bar{V} \cap B_{\inj}(x)} p(h,x, y) (|\delta \chi(x)| - |\delta \chi(y)|) \, d\mu(y,x) \,dt \right|\\
  &=\frac{1}{2\sqrt{h}} \left|\int_0^T \int_M |\delta \chi|(x) \int_{|f(x, y, \exp^{-1}(y), \exp_y^{-1}(x))| > \tau \bar{V} \cap B_{\inj}(x)} p(h,x, y) (\xi(x) - \xi(y)) \, d\mu(y,x) \,dt\right|\\
  &\leq \int_0^T \frac{\|\xi\|_\infty}{2} \int_M |\delta \chi|(x) \int_{M} \frac{\dist(x,y)}{\sqrt{h}}p(h,x, y) \dmy \dmx \,dt.
\end{align*}
The right-hand side vanishes as the integral $\int_M\frac{\dist(x,y)}{\sqrt{h}}p(h,\cdot, y) \dmy$ is bounded due to the Gaussian bounds \eqref{eq:gaussian_one} (compare with the proof of Lemma \ref{lem:cut_and_replace}\ref{part:nonscaled_vanish} for more details) and $\int_M |\delta_\chi| \, d\mu$ vanishes by the compactness of Lemma \ref{lem:Compactness}. 

Turning towards \eqref{eq:change_to_normal_co} one first plugs in the asymptotic expansion \eqref{eq:asymptotic_exp} to identify the limits
\begin{align*}
     &\lim_{h \downarrow 0} \frac{1}{2\sqrt{h}} \int_{|f(\cdot, y, \exp^{-1}(y), \exp_y^{-1}(\cdot))| > \tau \bar{V} \cap B_{\inj}(\cdot)} p(h,\cdot, y) |\delta \chi| \dmy \\
     = &\lim_{h \downarrow 0} \frac{1}{2\sqrt{h}} \int_{|f(\cdot, y, \exp^{-1}(y), \exp_y^{-1}(\cdot))| > \tau \bar{V} \cap B_{\inj}(\cdot)} G_h(\cdot, y) v_0(\cdot, y)|\delta \chi| \dmy.
\end{align*}
The right-hand side limit is equivalent to 
\begin{align*}
  &\lim_{h \downarrow 0} \frac{1}{\sqrt{h}} \int_{|f(\cdot, y, \exp^{-1}(y), \exp_y^{-1}(\cdot))| > \tau \bar{V} \cap B_{\inj}(\cdot)} G_h(\cdot, y) v_0(\cdot, y)|\delta \chi| \dmy\\
  &= \lim_{h \downarrow 0} \frac{1}{\sqrt{h}} \int_{|f(\cdot, x_{\sqrt{h}z}, z, - PT_{x \rightarrow x_{\sqrt{h}z}},z)| > \tau \bar{V} \cap B_{\inj}(0)} G_1(z) v_0(\cdot, x_{\sqrt{h}z} )\eta(x_{\sqrt{h}z})\, dz|\delta \chi| 
\end{align*}
after using the scaling of $G_h$, normal coordinates around $x$ and denoting $\eta(y) = \sqrt{\det(g)(y)}\rho(y)$. As $v_0$ and $\eta$ are smooth their product satisfies
\begin{equation*}
  |v_0(x, x_{\sqrt{h}z})\eta(x_{\sqrt{h}z}) - 1|  =  |v_0(x, x_{\sqrt{h}z})\eta(x_{\sqrt{h}z}) - v_0(x,x) \eta(0)| \lesssim \sqrt{h},
\end{equation*}
where $v_0(x,x) = \frac{1}{\rho(x)}$ and $\eta(x) = \rho(x)$ was used.
This yields together with the vanishing $\int_M |\delta_\chi| \, d \mu$ due to compactness of Lemma \ref{lem:Compactness} and the definition of $f$ that
\begin{align*}
&\lim_{h \downarrow 0} \frac{1}{\sqrt{h}} \int_{|f(\cdot, x_{\sqrt{h}z}, z, - PT_{x \rightarrow x_{\sqrt{h}z}}z)| > \tau \bar{V} \cap B_{\inj}(0)} G_1(z) v_0(\cdot, x_{\sqrt{h}z}) \eta(x_{\sqrt{h}z})\, dz|\delta \chi| \\
& = \lim_{h \downarrow 0} \frac{1}{\sqrt{h}} \int_{|\frac{1}{2}(\langle z, \bar{\mathbf{n}} \rangle + \langle z, PT_{x \rightarrow x_{\sqrt{h}z}} \bar{\mathbf{n}}(x_{\sqrt{h}z}) \rangle)| > \tau \bar{V}} G_1(z) \, dz|\delta \chi|. 
\end{align*}
As $G_1$ is even it holds
\begin{align*}
  \int_{\frac{1}{2}(\langle z, \bar{\mathbf{n}} \rangle + \langle z, PT_{x \rightarrow x_{\sqrt{h}z}} \bar{\mathbf{n}}(x_{\sqrt{h}z}) \rangle) > \tau \bar{V} } G_1(z) \, dz =\int_{\frac{1}{2}(\langle z, \bar{\mathbf{n}} \rangle + \langle z, PT_{x \rightarrow x_{\sqrt{h}z}} \bar{\mathbf{n}}(x_{\sqrt{h}z}) \rangle) < - \tau \bar{V} } G_1(z) \, dz
\end{align*}
and by the rotational symmetry of $G_1$ and the assumption $|\bar{\mathbf{n}}(y)| = 1$ a.e.~one furthermore checks
\begin{align*}
  \int_{\langle z, PT_{x \rightarrow x_{\sqrt{h}z}} \bar{\mathbf{n}}(x_{\sqrt{h}z}) \rangle > \tau \bar{V} } G_1(z) \, dz = \int_{\langle z,  \bar{\mathbf{n}}(x) \rangle> \tau \bar{V} } G_1(z) \, dz
\end{align*}
Putting everything together one concludes 
\begin{align*}
  &\lim_{h \downarrow 0} \frac{1}{\sqrt{h}} \int_{|f(\cdot, y, \exp^{-1}(y), \exp_y^{-1}(\cdot))| > \tau \bar{V} \cap B_{\inj}(\cdot)} p(h,\cdot, y) |\delta \chi| \dmy \\
  & = \lim_{h \downarrow 0} \frac{1}{\sqrt{h}} \int_{|\frac{1}{2}(\langle z, \bar{\mathbf{n}} \rangle + \langle z, PT_{x \rightarrow x_{\sqrt{h}z}} \bar{\mathbf{n}}(x_{\sqrt{h}z}) \rangle)| > \tau \bar{V}} G_1(z) \, dz|\delta \chi|\\
& = \lim_{h \downarrow 0} \frac{2}{\sqrt{h}} \int_{\langle z,  \bar{\mathbf{n}}(x) \rangle > \tau \bar{V} } G_1(z) \, dz |\delta \chi|.
\end{align*}

\emph{Step 4}: Conclusion. To get in position to apply the previous steps one uses the decomposition
\begin{align*}
  \frac{1}{\sqrt{h}}|\delta \chi| = \frac{1}{\sqrt{h}}(& \delta \chi p(h)*\delta \chi
   + \delta \chi_+ p(h)*\delta \chi_- 
   + \delta \chi_- p(h)*\delta \chi_+\\
   + &\delta \chi_+ p(h)*(1-\delta \chi_+)
   + \delta \chi_- p(h)*(1-\delta \chi_-)) 
\end{align*}
in the form, using $2 \int_{\langle z, \bar{\mathbf{n}} \rangle \geq 0} G_1(z) \,dz=1$,
\begin{align*}
  2 \int_{0 \leq \langle z, \bar{\mathbf{n}} \rangle \leq \alpha \bar{V}} G_1(z) \,dz\frac{1}{\sqrt{h}}|\delta \chi| = \frac{1}{\sqrt{h}}(& \delta \chi p(h)*\delta \chi
   + \delta \chi_+ p(h)*\delta \chi_- 
   + \delta \chi_- p(h)*\delta \chi_+\\
   + &\delta \chi_+ p(h)*(1-\delta \chi_+)
   + \delta \chi_- p(h)*(1-\delta \chi_-) \\
   -& 2 \int_{\langle z, \bar{\mathbf{n}} \rangle > \alpha \bar{V}} G_1(z) \,dz |\delta \chi|). 
\end{align*}
By an elementary lower-semi-continuity argument it holds in a distributionally sense $\alpha \rho |\partial_t \chi| \leq \liminf_{h \downarrow 0} \frac{1}{\sqrt{h}} |\delta \chi|$ provided the right-hand side is a finite measure. Note that the $\rho$-factor pops up, as the distribution $\frac{1}{\sqrt{h}} |\delta \chi|$ is as always tested with respect to the measure $\mu$. Therefore, the results of the previous Steps in form of \eqref{eq:vel_step1}, \eqref{eq:vel_step2} and \eqref{eq:vel_step3} yield the distributional estimate
\begin{equation}\label{eq:conclusion_limit_estimate}
  2 \alpha \int_{0 \leq \langle z, \bar{\mathbf{n}} \rangle \leq \alpha \bar{V}} G_1(z) \, dz \rho \, |\partial_t \chi |\leq \alpha^2 \nu + 2\int_{0 \leq \langle z, \bar{\mathbf{n}}\rangle \leq \alpha \bar{V}} G_1(z) |\langle z, \mathbf{n} \rangle| \, dz\, \rho |\nabla \chi| \,dt,
\end{equation} 
which in particular shows that $\partial_t \chi$ is a measure. The limit $\bar{V} \uparrow \infty$, $\frac{1}{C} \leq \rho \leq C$ and $4c_0=\int G_1(z)  |\langle z, \mathbf{n} \rangle| \, dz$ (for more details see \eqref{eq:derivation_c0}) additionally yield $\alpha |\partial_t \chi| \leq \alpha^2 \nu + 4c_0 C^2 |\nabla \chi| dt$ which is divided by $\alpha$:
\begin{equation*}
  |\partial_t \chi| \leq \alpha \nu + \frac{4C^2}{\alpha} c_0 |\nabla \chi| dt.
\end{equation*}
Fixing a nullset of $|\nabla \chi| \times dt$ and then sending $\alpha \downarrow 0$ implies that $\partial_t \chi$ is absolutely continuous with respect to $|\nabla \chi|$. Hence, there exists a $V \in L^1(|\nabla \chi| dt)$ such that \eqref{eq:V_dist_vel} holds distributionally with respect to the volume measure.

As $|\partial_t \chi| = |V| |\nabla \chi| dt$ is absolutely continuous with respect to $|\nabla \chi| dt$, the estimate \eqref{eq:conclusion_limit_estimate} also holds when replacing $\nu$ by $\nu'$ the absolutely continuous part of $\nu$. Denote $\nu' = \theta |\nabla \chi| dt$ to rewrite
\begin{align*}
  2 \alpha \int_{0 \leq \langle z, \bar{\mathbf{n}} \rangle \leq \alpha \bar{V}} G_1 \,dz \,\rho|V|  \leq \alpha^2 \theta + 2 \int_{0 \leq \langle z, \mathbf{n} \rangle \leq \alpha \bar{V}} G_1(z) |\langle z, \bar{\mathbf{n}} \rangle| \, dz \rho  \quad \quad  |\nabla \chi | dt \text{-a.e.}
\end{align*}
As in \emph{Step 1}, by a separability argument one can choose $\bar{\mathbf{n}} = \mathbf{n}$. This together with the radial symmetry of $G_1$ yields for an orthonormal basis $\{e_i\}_{i = 1}^d$ of $T_x M$
\begin{equation*}
    2 \alpha \int_{0 \leq \langle z, e_1 \rangle \leq \alpha \bar{V}} G_1 \,dz \,\rho|V|  \leq \alpha^2 \theta + 2 \int_{0 \leq \langle z, e_1 \rangle \leq \alpha \bar{V}} G_1(z) \langle z, e_1 \rangle \, dz \rho  \quad \quad  |\nabla \chi | dt \text{-a.e.}
\end{equation*}
Using $\alpha' = \alpha \bar{V}$ and diving by $\alpha^2$ this turns with the notation of \emph{Step 2.3} of the proof of Proposition \ref{prop:metricslope} to
\begin{equation*}
  2 \frac{\bar{V}}{\alpha'} \int_{0 \leq  \tilde{z}_1  \leq \alpha' } G_1^{\R^d}(\tilde{z}) \,d\tilde{z}\, \rho|V|  \leq  \theta + 2 \frac{\bar{V}^2}{\alpha'^2} \int_{0 \leq  \tilde{z}_1\leq \alpha'} G_1^{\R^d}(\tilde{z}) \tilde{z}_1 \, d\tilde{z} \ \rho  \quad \quad  |\nabla \chi | dt \text{-a.e.}
\end{equation*}
One concludes by using the limits (which follow by factorizing  $G_1^{\R^d}$ into the one and $(d-1)$ standard Gaussian)
\begin{align*}
  \lim_{\alpha' \downarrow 0} \frac{1}{\alpha'} \int_{0 \leq  \tilde{z}_1 \leq \alpha'} G_1^{\R^d}(\tilde{z}) \,d\tilde{z} = G_1^{\R^d}(0) = \frac{c_0}{2},\\
    \lim_{\alpha' \downarrow 0} \frac{1}{\alpha'^2} \int_{0 \leq  \tilde{z}_1  \leq \alpha' }\tilde{z}_1 G_1^{\R^d}(\tilde{z}) \,d\tilde{z} = \frac{d^2}{d^2 \tilde{z}_1} G^{\R^1}_1(0) = \frac{c_0}{4},
\end{align*}
to rewrite 
\begin{equation*}
   c_0\bar{V} |V| \rho \leq  \theta +  \frac{c_0}{2} \bar{V}^2\rho  \quad \quad  |\nabla \chi | dt \text{-a.e.}
\end{equation*}
By another separability argument one may use $\bar{V} = |V|$ such that the above yields \eqref{eq:localized_dissipation_inequality} in form of $\frac{c_0}{2} |V|^2 \rho \leq \theta$. This concludes the proof of Proposition \ref{prop:distance}.
\end{proof}

\section*{Acknowledgments}
This project is funded by the Deutsche Forschungsgemeinschaft (DFG, German Research Foundation) under Germany's Excellence Strategy EXC 2181/1 - 390900948 (the Heidelberg STRUCTURES Excellence Cluster) and the Research Training Group 2339 IntComSin -- Project-ID 321821685. I want to thank my supervisor Tim Laux who gave me so much insightful feedback in this fascinating project.

\frenchspacing
\bibliographystyle{abbrv}
\bibliography{volumeMBO}
  \end{document}